\newtheorem{theorem}{Theorem}[section]
\newtheorem{lemma}[theorem]{Lemma}
\newtheorem{proposition}[theorem]{Proposition}
\newtheorem{question}[theorem]{Question}
\newtheorem{problem}[theorem]{Problem}
\theoremstyle{definition}
\newtheorem{defn}[theorem]{Definition}
\newtheorem{remark}[theorem]{Remark}
\newtheorem{example}[theorem]{Example}
\newtheoremstyle{break}
  {}
  {}
  {\itshape}
  {}
  {\bfseries}
  {.}
  {\newline}
  {}
\theoremstyle{break}
\def\E{\mathbb{E}}
\def\Z{\mathbb{Z}}
\def\R{\mathbb{R}}
\def\T{\mathbb{T}}
\def\C{\mathbb{C}}
\def\N{\mathbb{N}}
\def\cD{\mathcal{D}}
\def\cL{\mathcal{L}}
\def\X{\mathcal{X}}
\newcommand{\ud}{\,\mathrm{d}}
\newcommand{\id}{\mathrm{id}}
\DeclareMathOperator{\poly}{poly}
\DeclareMathOperator{\Lip}{Lip}
\DeclareMathOperator{\q}{c}
\DeclareMathOperator{\ns}{X}
\DeclareMathOperator{\nss}{Y}
\DeclareMathOperator{\nh}{H}
\newcommand{\nil}{\mathrm{nil}}
\newcommand{\sml}{\mathrm{sml}}
\newcommand{\unf}{\mathrm{unf}}
\newcommand{\Zmod}[1]{\Z_{#1}} 
\let\originalleft\left
\let\originalright\right
\renewcommand{\left}{\mathopen{}\mathclose\bgroup\originalleft}
\renewcommand{\right}{\aftergroup\egroup\originalright}
\providecommand{\abs}[1]{\left\lvert #1 \right\rvert}
\providecommand{\norm}[1]{\left\lVert #1 \right\rVert}
\providecommand{\Sol}{S}
\providecommand{\eqm}{g}
\providecommand{\lf}{\psi}
\providecommand{\Lf}{\Psi}
\renewcommand{\subset}{\subseteq}
\begin{document}

\title[A continuous model for systems of complexity 2]{A continuous model for systems of complexity 2 on simple abelian groups}
\author{Pablo Candela}
\email{pablo.candela@uam.es}

\author{Bal\'azs Szegedy}
\email{szegedyb@gmail.com}

\date{}
\subjclass[2010]{Primary 11B30; Secondary 11K70, 43A85}
\keywords{Linear forms, higher-order Fourier analysis, nilmanifolds, equidistribution}

\maketitle
\begin{abstract}
It is known that if $p$ is a sufficiently large prime then for every function $f:\Zmod{p}\to [0,1]$ there exists a continuous function on the circle $f':\T\to [0,1]$ such that the averages of $f$ and $f'$ across any prescribed system of linear forms of complexity 1 differ by at most $\epsilon$. This result follows from work of Sisask, building on Fourier-analytic arguments of Croot that answered a question of Green. We generalize this result to systems of complexity at most 2, replacing $\T$ with the torus $\T^2$ equipped with a specific filtration. To this end we use a notion of modelling for filtered nilmanifolds, that we define in terms of equidistributed maps, and we combine this with tools of quadratic Fourier analysis. Our results yield expressions on the torus for limits of combinatorial quantities involving systems of complexity 2 on $\Zmod{p}$. For instance, let $m_4(\alpha,\Zmod{p})$ denote the minimum, over all sets $A\subset \Zmod{p}$ of cardinality at least $\alpha p$, of the density of 4-term arithmetic progressions inside $A$. We show that $\lim_{p\to \infty} m_4(\alpha,\Zmod{p})$ is equal to the infimum, over all continuous functions $f:\T^2\to [0,1]$ with $\int_{\T^2}f\geq \alpha$, of the following integral:
\[
\int_{\T^5} f\binom{x_1}{y_1}\; f\binom{x_1+x_2}{y_1+y_2}\;
 f\binom{x_1+2x_2}{y_1+2y_2+y_3}\; f\binom{x_1+3 x_2}{y_1+3y_2+3y_3} \ud\mu_{\T^5}(x_1,x_2,y_1,y_2,y_3).
\]
\end{abstract}

\section{Introduction}

\noindent One of the well-known central objectives in arithmetic combinatorics is to find optimal bounds for Szemer\'edi's theorem on arithmetic progressions. A closely related problem is to determine how small the average across $k$-term progressions can be for a $[0,1]$-valued function of fixed average value on a large cyclic group $\Z_N=\Z/N\Z$. We write this quantity as follows:\footnote{For a finite set $X$ and a function $f:X\to \C$, we denote by $\E_{x\in X} f(x)$ the average $\frac{1}{|X|}\sum_{x\in X}f(x)$.}
\[
m_k(\alpha,\Zmod{N}):=\; \inf_{f\colon\Zmod{N}\to [0,1],\; \E_{\Zmod{N}} f \,\geq \alpha }\; \E_{n_1,n_2\in\Zmod{N}} f(n_1)f(n_1+n_2)\cdots f(n_1+(k-1)n_2).
\]
A natural direction in which to gain insight on this difficult problem consists in analysing the asymptotic behaviour of these quantities as $N\to \infty$. Answering a question of Green \cite[Problem 3.1]{C&Lev}, Croot took a first step in this direction for the case $k=3$, by proving the following result \cite[Theorem 1]{croot:3APminconv}.
\begin{theorem}[Croot]\label{thm:croot}
For every $\alpha \in [0,1]$, the sequence $m_3(\alpha,\Zmod{N})$ converges as $N\to \infty$ through the primes.
\end{theorem}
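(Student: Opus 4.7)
The plan is to establish convergence by showing that for every $\epsilon>0$ there exists $P_0$ such that $|m_3(\alpha,\Zmod{p}) - m_3(\alpha,\Zmod{q})| < \epsilon$ for all primes $p,q \geq P_0$. The strategy passes through a continuous intermediate object on $\T$: any near-optimal function $f:\Zmod{p}\to[0,1]$ is to be \emph{modelled} by an object on $\T$ that records its large-Fourier-coefficient part in such a way that the 3AP count is approximately preserved; transferring this model to $\Zmod{q}$ via rational approximation of its frequencies then produces a function on $\Zmod{q}$ with nearly the same 3AP count, and the symmetric comparison closes the argument.

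First, fix a threshold $\eta=\eta(\epsilon,\alpha)>0$ and, given $f:\Zmod{p}\to[0,1]$ with $\E f \geq \alpha$, consider the large spectrum $\mathrm{Spec}_\eta(f):=\{\xi\in\widehat{\Zmod{p}} : |\hat f(\xi)|\geq \eta\}$. Parseval gives $|\mathrm{Spec}_\eta(f)|\leq \eta^{-2}$, a bound independent of $p$. Expanding the 3AP count as
\[
\E_{n_1,n_2\in\Zmod{p}} f(n_1)f(n_1+n_2)f(n_1+2n_2) \;=\; \sum_{\xi\in\widehat{\Zmod{p}}} \hat f(\xi)^2\, \hat f(-2\xi),
\]
and bounding the contribution of $\xi\notin\mathrm{Spec}_\eta(f)$ by $\eta\sum_{\xi}|\hat f(\xi)|^2\leq \eta$ (using $\|f\|_2\leq 1$ and Cauchy--Schwarz), we see that the 3AP count of $f$ is determined up to $O(\eta)$ by the restriction of $\hat f$ to the bounded set $\mathrm{Spec}_\eta(f)$.

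Second, I transfer this structured data to $\Zmod{q}$. For each $\xi\in\mathrm{Spec}_\eta(f)$, Dirichlet's simultaneous approximation theorem yields $\xi'\in\widehat{\Zmod{q}}$ with $\Tnorm{\xi/p-\xi'/q}<\delta$ for some small $\delta=\delta(\epsilon,\eta)$, provided $q$ is large enough. I then construct $g:\Zmod{q}\to[0,1]$ with mean $\geq\alpha-\epsilon$ whose large Fourier coefficients mimic those of $f$ under the correspondence $\xi\leftrightarrow\xi'$. The device is a Bohr-set convolution: averaging an appropriate lift of $f$ over a regular Bohr set that is an approximate common period of the characters $\{\xi'\}$ yields a function automatically valued in $[0,1]$ whose relevant Fourier coefficients approximate the targeted ones to within $O(\delta)$. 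A direct computation then shows that the 3AP count of $g$ differs from that of $f$ by at most $O(\eta)+O(\delta\,\eta^{-C})$ for an absolute constant $C$. Running the symmetric construction and coordinating $\eta,\delta\to 0$ with $\epsilon$ gives $|m_3(\alpha,\Zmod{p})-m_3(\alpha,\Zmod{q})|<\epsilon$ for $p,q\geq P_0$, and Cauchy-completeness yields convergence.

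The main obstacle I expect is the $[0,1]$-valued constraint on $g$: one cannot simply set $g$ equal to the trigonometric polynomial $\sum_{\xi\in\mathrm{Spec}_\eta(f)} \hat f(\xi)\,\chi_{\xi'}$, since such a polynomial need not lie in $[0,1]$, and any naive truncation would destroy the Fourier structure just imposed. Croot's remedy, which is the technical heart of the argument, is to perform the construction via Bohr-set convolution, which produces $[0,1]$-valued functions by design while preserving the dominant Fourier data up to explicit error. Quantifying the interplay between the Bohr-set radius, the approximation quality $\delta$, and the spectral threshold $\eta$ — all tending to $0$ in a coordinated manner with $\epsilon$ — and verifying that 3AP counts depend continuously on the dominant Fourier data in a manner robust to these approximations, is the delicate part of the proof.
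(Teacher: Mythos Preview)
The paper does not give its own proof of Theorem~\ref{thm:croot}; it attributes the result to Croot \cite{croot:3APminconv} and remarks only that ``the main tool in Croot's proof is the Fourier transform on $\Zmod{N}$.'' Your outline is a faithful sketch of precisely that argument: the large-spectrum truncation, the Fourier expansion $\sum_\xi \hat f(\xi)^2\hat f(-2\xi)$ of the 3AP count, the transfer of dominant frequencies to another prime cyclic group, and the Bohr-set convolution device to preserve the $[0,1]$-range are all Croot's ingredients. So your proposal agrees with the proof the paper cites.

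That said, the paper's own machinery yields Theorem~\ref{thm:croot} by a different route, as a by-product of Theorem~\ref{thm:comp2trans} (or already of Sisask's Theorem~\ref{thm:Sisask}): one transfers $f:\Zmod{p}\to[0,1]$ to a \emph{fixed} continuous model $\ns_1=\T$ rather than to another $\Zmod{q}$, obtaining $|m_3(\alpha,\Zmod{p})-m_{\Lf_3}(\alpha,\ns_1)|\leq\epsilon$ for all large $p$, whence convergence is immediate without a Cauchy argument between two primes. This buys an explicit expression for the limit and generalises, via nilmanifolds and the $U^{s+1}$ regularity method, to higher complexity, which is the point of the paper. Your direct $\Zmod{p}\to\Zmod{q}$ transfer is more elementary and self-contained for $k=3$ but does not identify the limit. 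One small narrative slip: you announce a ``continuous intermediate object on $\T$,'' yet the construction you actually describe is Croot's direct discrete-to-discrete transfer, with $\T$ appearing only implicitly through the phases $\xi/p$; the genuine passage through $\T$ is Sisask's later refinement recorded as Theorem~\ref{thm:Sisask}.
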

\noindent The main tool in Croot's proof is the Fourier transform on $\Zmod{N}$. 
More recently, using higher-order Fourier analysis, the first named author and Sisask extended this convergence result to $k$-term arithmetic progressions for every $k$, and more generally to all systems of linear forms of finite complexity\footnote{We recall this notion of complexity in Definition \ref{def:size&comp}. Let us recall also that the infimum in $m_k(\alpha,\Zmod{N})$ can be restricted to indicator functions of subsets of $\Zmod{N}$ without affecting its asymptotic behaviour, as explained in \cite[p. 3]{croot:3APminconv} for $k=3$; for $k>3$ one can argue similarly, using \cite[Lemma 8.2]{CS3}.} \cite[Theorem 1.3]{CS3}. Thus, provided the values of $N$ are adequately restricted, the problem of estimating $m_k(\alpha,\Zmod{N})$ has a well-defined asymptotic version, namely to estimate the following limit as a function of $\alpha$:
\begin{equation}\label{eq:mainlim}
\lim_{\substack{N\to \infty\\ N\textrm{ prime}}} m_k(\alpha,\Zmod{N}),
\end{equation}
and especially to determine the order of magnitude of this function as $\alpha\to 0$.\\
\indent To shed light on this problem, it is natural to seek an expression for the limit in terms of an integral over some fixed object, hopefully some compact abelian group as simple as possible. A result in this direction was given for $k=3$ by Sisask, identifying the circle $\T=\R/\Z$ as such a group, as follows \cite[Theorem 4.1.2]{SisaskPhD}. 
\begin{theorem}[Sisask]\label{thm:Sisask}
For every $\alpha \in [0,1]$, we have 
\[
\lim_{p\to \infty} m_3(\alpha,\Zmod{p}) = \inf_{\substack{f\colon \T\to [0,1]\;\textrm{Borel}\\ \int_{\T}\; f \,\geq \,\alpha}}\;\; \int_{\T^2} f(x_1)\,f(x_1+x_2)\,f(x_1+2x_2)\; \ud\mu_{\T^2}(x_1,x_2).
\]
\end{theorem}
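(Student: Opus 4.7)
Write $L := \lim_{p\to\infty} m_3(\alpha,\Zmod{p})$ (which exists by Theorem~\ref{thm:croot}) and, for the right-hand side, $J(\alpha) := \inf \int_{\T^2} f(x_1) f(x_1+x_2) f(x_1+2x_2) \ud\mu_{\T^2}$, the infimum being taken over Borel $f: \T \to [0,1]$ with $\int_\T f \geq \alpha$. We establish $L = J(\alpha)$ by proving each inequality separately.

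\textbf{The upper bound $L \leq J(\alpha)$} is a discretization/equidistribution argument. Given a Borel $f: \T \to [0,1]$ with $\int_\T f \geq \alpha$, approximate $f$ from below in $L^1$ by a continuous $\tilde f \leq f$ with $\int \tilde f \geq \alpha-\delta$. Set $f_p(n) := \tilde f(n/p)$ for $n\in\Zmod{p}$. Weyl's theorem gives equidistribution of $(n_1,n_2)\mapsto (n_1/p,n_2/p)$ in $\T^2$, so applying this to the continuous integrand $(x_1,x_2)\mapsto \tilde f(x_1)\tilde f(x_1+x_2)\tilde f(x_1+2x_2)$ shows the 3AP average of $f_p$ converges to $\int_{\T^2} \tilde f(x_1)\tilde f(x_1+x_2)\tilde f(x_1+2x_2)\ud\mu_{\T^2}$, and a Riemann-sum argument gives $\E_{\Zmod{p}} f_p \to \int_\T \tilde f$. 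A tiny additive perturbation makes $f_p$ admissible in $m_3(\alpha,\Zmod{p})$ for large $p$, so $L \leq I(\tilde f) \leq I(f) + O(\delta)$; letting $\delta \to 0$ and taking the infimum over $f$ yields the bound.

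\textbf{The lower bound $L \geq J(\alpha)$} is the substantive direction. Fix $\epsilon>0$ and, for each prime $p$, select $f_p : \Zmod{p}\to[0,1]$ with $\E f_p \geq \alpha$ whose 3AP count is within $\epsilon$ of $m_3(\alpha,\Zmod{p})$. The goal is to construct a Borel $f_\infty : \T \to [0,1]$ with $\int f_\infty \geq \alpha$ and 3AP integral at most $L + O(\epsilon)$. Following Croot and Sisask, fix a Fourier threshold $\eta=\eta(\epsilon)$ and let $\Lambda_p := \{\xi\in\widehat{\Zmod{p}} : |\widehat{f_p}(\xi)|\geq\eta\}$, which has size $\leq\eta^{-2}$ by Parseval. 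The generalized von Neumann inequality says that the 3AP count is controlled, up to $O(\epsilon)$, by the $U^2$-structural part built from the frequencies in $\Lambda_p$. To transfer this structure to $\T$ while remaining in $[0,1]$, convolve $f_p$ with the normalised indicator of a regular Bohr set $B_p$ chosen to resonate simultaneously with all $\xi\in\Lambda_p$; this convolution produces a $[0,1]$-valued function $g_p$ with the same mean as $f_p$ and 3AP count changed by at most $O(\epsilon)$, and which is now well-approximated by a bounded-degree trigonometric polynomial with frequencies $\xi/p$. Extract a subsequence along which the finite data $(\xi/p, \widehat{g_p}(\xi))_{\xi\in\Lambda_p}$ converges to limits $(\theta_i,c_i)$ in $\T\times\C$, and view each $g_p$ as a piecewise-constant function on $\T$ via $n\mapsto n/p$; pass to a weak-$*$ limit $f_\infty$ in $L^\infty(\T,[0,1])$. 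The mean survives by weak-$*$ convergence of $1$, and the 3-linear form is continuous in the finite structural data, so $I(f_\infty)$ equals the limit of the 3AP counts, which is at most $L + O(\epsilon)$.

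\textbf{Main obstacle.} The hardest point is maintaining the constraint $0 \leq f_\infty \leq 1$ through the limiting procedure while preserving the 3AP integral. A naive Fourier truncation of $f_p$ onto $\Lambda_p$ produces a trigonometric polynomial that is unbounded, so its lift to $\T$ is not admissible in $J(\alpha)$, and the 3AP count of the truncation can differ drastically from that of $f_p$. Sisask's convolution with a Bohr-set cutoff is the device that simultaneously (i) dampens the $U^2$-small unstructured part, (ii) preserves the $[0,1]$ range, and (iii) outputs an object whose significant Fourier data lift coherently to characters of $\T$. Making this step quantitative, in particular choosing the Bohr-set radius to win against the Fourier threshold $\eta$ and the sampling error in the frequencies $\xi/p$, is where the real work lies.
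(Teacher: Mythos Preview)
The paper does not give its own proof of Theorem~\ref{thm:Sisask}; the result is quoted from Sisask's thesis, and the transference mechanism behind it is attributed to \cite[Proposition~4.2.8]{SisaskPhD} and \cite[Corollary~3.8]{CS1}. Your outline---large spectrum, Bohr-set convolution to preserve the range $[0,1]$, compactness extraction of a limiting function on $\T$---is precisely the Croot--Sisask Fourier-analytic route, so at the level of strategy there is nothing to compare: your plan \emph{is} the cited argument.

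One technical slip in the upper bound: you claim to approximate a Borel $f:\T\to[0,1]$ from below by a continuous $\tilde f\leq f$ with $\int\tilde f\geq\alpha-\delta$. This fails in general---take $f=1_A$ with $A$ a closed nowhere-dense set of positive measure; any continuous $\tilde f\leq 1_A$ vanishes on the dense complement of $A$, hence everywhere. The repair is standard: approximate $f$ in $L^1$ (not pointwise from below) by a continuous $[0,1]$-valued function via Lusin's theorem, and invoke the $L^1$-continuity of the 3AP average. This is exactly the device recorded in the paper as Remark~\ref{rem:L1cont}. With that change the upper bound goes through.

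Your lower-bound sketch is sound, and you have correctly isolated the main difficulty. One point deserving explicit care in a full write-up: weak-$*$ convergence by itself does not control the \emph{trilinear} 3AP form. What actually carries the 3AP count to the limit is that, after Bohr-set smoothing, $g_p$ is approximated (in a norm controlling 3AP counts) by a trigonometric polynomial with a bounded number of frequencies, and it is the convergence of those finitely many Fourier coefficients and frequencies that transports the 3AP value. You allude to this (``continuous in the finite structural data''), but it should be stated rather than left implicit.
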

\noindent (Here and below, given a compact abelian group $Z$ we denote by $\mu_Z$ the Haar probability measure on $Z$, and $p$ denotes a prime number.)

We are thus led to the question of what would be an adequate generalization of Theorem \ref{thm:Sisask} for $k$-term progressions with $k>3$. The main result of this paper provides an answer for $k=4$. In order to state our result let us gather some terminology.
\begin{defn}[$\Lf$-average over a compact abelian group]\label{def:SolAb}
Let $D,t$ be positive integers and let $\Lf=(\lf_1,\ldots,\lf_t)$ be a system of integer linear forms, thus $\lf_i:\Z^D\to\Z$ is a homomorphism for each $i\in [t]$. For a compact abelian group $Z$ and a measurable function $f:Z\to \C$, we define\footnote{Each map $\lf_i$ is originally defined on $\Z^D$ but the definition then extends to any $Z^D$. Thus on $\Z^D$ we have $\lf_i(n)= c_{i,1}n_1+\cdots +c_{i,D}n_D$ for some $c_{i,j}\in \Z$, and then $\lf_i(x)=c_{i,1}x_1+\cdots +c_{i,D}x_D$ on $Z^D$.}
\begin{equation}\label{eq:SolAb}
\Sol_\Lf(f:Z)= \int_{Z^D} f\big(\lf_1(x)\big)\,\cdots\, f\big(\lf_t(x)\big)\, \ud\mu_{Z^D}(x).
\end{equation}
We call this the $\Lf$\emph{-average of} $f$ (or average of $f$ across $\Lf$) over $Z$.
\end{defn}
\noindent We need an analogous notion of averaging across systems of linear forms for functions on a \emph{filtered nilmanifold} $\ns=(G/\Gamma,G_\bullet)$. This analogue relies on the concept of the \emph{Leibman nilmanifold} associated with a system $\Lf:\Z^D\to \Z^t$ and $\ns$. This is a certain subnilmanifold of $\ns^t$ that we shall denote by $\ns^\Lf$. These concepts have been used in previous works (see \cite{GTarith}) and we recall their definitions in Appendix \ref{App}. For now let us take these concepts for granted to give the following main definition.

\begin{defn}[$\Lf$-average over a filtered nilmanifold]\label{def:SolNil}
Let $D,t$ be positive integers and let $\Lf=(\lf_1,\ldots,\lf_t)$ be a system of linear forms $\lf_1,\ldots,\lf_t:\Z^D\to\Z$. Let $\ns=(G/\Gamma,G_\bullet)$ be a filtered nilmanifold (see Definition \ref{def:Filnil}), let $\ns^\Lf$ be the Leibman nilmanifold for $\Lf$ on $\ns$, and let $f:G/\Gamma\to \C$ be measurable.\footnote{Measurability in this paper is always with respect to the Borel $\sigma$-algebra. Recall also that every  nilmanifold $\ns=G/\Gamma$ has a $G$-invariant Borel probability measure, which we denote by $\mu_{\ns}$.} We define the $\Lf$\emph{-average} of $f$ over $\ns$ to be
\begin{equation}\label{eq:SolNil}
\Sol_\Lf(f:\ns)= \int_{\ns^\Lf} f(y_1)\,\cdots\, f(y_t)\, \ud\mu_{\ns^\Lf} (y).
\end{equation}
We define $m_{\Lf}(\alpha,\ns)= \inf\,\big\{S_{\Lf}(f:\ns)~|~f:\ns\to [0,1]\textrm{ Borel},\;  \int_{\ns} f \ud\mu_{\ns}\geq \alpha\big\}$. \footnote{If each coordinate projection $\ns^t\to \ns$ restricts to a measure preserving map from $\ns^\Lf$ to $\ns$, then by a simple argument using Lusin's theorem (discussed in Remark \ref{rem:L1cont} below) one sees that the value of $m_\Lf(\alpha,\ns)$ is unchanged if we restrict the infimum to continuous functions $f:\ns\to [0,1]$.}
\end{defn}
\noindent As a first example, one can check that letting $\ns$ denote $\T$ equipped with the lower central series (this is a nilmanifold that happens to be an abelian group), then, using Definitions \ref{def:LeibGp} and \ref{def:Leibnil}, one has that $\Sol_\Lf(f:\ns)$ is just the average $\Sol_\Lf(f:\T)$ in \eqref{eq:SolAb}. \\ 
\indent An important aspect of Definition \ref{def:SolNil} is that the choice of filtration in $\ns$ can modify the Leibman nilmanifold $\ns^\Lf$ significantly and thereby change the $\Lf$-average (we illustrate this in Example \ref{ex:4AP} below).\\
\indent From now on, given a group $G$, we shall denote by $G_{\bullet(1)}$ the lower central series on $G$. Denoting by $\ns_1$ the filtered nilmanifold $(\T,\R_{\bullet(1)})$, Theorem \ref{thm:Sisask} can be rephrased as saying that $\lim_{p\to \infty} m_{\Lf_3}(\alpha,\Zmod{p}) = m_{\Lf_3}(\alpha,\ns_1)$, where $\Lf_3$ is the system corresponding to 3-term arithmetic progressions. It is known that this result holds more generally in that $\Lf_3$ may be replaced with any system of complexity 1. The point of our main result is that by replacing $\ns_1$ with a natural and only slightly more complicated filtered group, one obtains an extension of Theorem \ref{thm:Sisask} for all systems of complexity at most 2.\\
\indent If $G_{\bullet},H_{\bullet}$ are filtrations on groups $G,H$, with $i$-th terms $G_{(i)},H_{(i)}$, then the \emph{product filtration}, denoted $G_{\bullet}\times H_{\bullet}$, is the filtration on the direct product $G\times H$ with $i$-th term $G_{(i)}\times H_{(i)}$. For an integer $d\geq 2$ and an abelian group $Z$, we denote by $Z_{\bullet(d)}$ the \emph{maximal degree-$d$ filtration} on $Z$, that is the filtration with $i$-th term equal to $Z$ for $i=0,\dots,d$ and equal to $\{0_Z\}$ for $i>d$. We can now state our main result.
\begin{theorem}\label{thm:main}
Let $\ns_2=(\T^2,\;\R_{\bullet(1)}\times\R_{\bullet(2)})$. Then for every system $\Lf$ of integer linear forms of complexity at most 2, and every $\alpha \in [0,1]$, we have
\begin{equation}\label{eq:main}
\lim_{p\to \infty} m_\Lf(\alpha,\Zmod{p}) = m_\Lf(\alpha,\ns_2).
\end{equation}
\end{theorem}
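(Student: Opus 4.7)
The plan is to establish the two inequalities in~\eqref{eq:main} separately. For the easier direction $\lim_{p\to\infty}m_\Lf(\alpha,\Zmod{p})\leq m_\Lf(\alpha,\ns_2)$, I would take a continuous $f':\ns_2\to[0,1]$ with $\int_{\T^2} f'\geq \alpha$ and, for each large prime $p$, construct a polynomial map $\phi_p:\Zmod{p}\to\T^2$ of degree at most~$2$ (with first coordinate linear, second coordinate quadratic) whose underlying coefficients are chosen so that, as $p\to\infty$, $\phi_p$ becomes equidistributed in $\ns_2$ and, more importantly, the map $n\mapsto\bigl(\phi_p(\lf_1(n)),\dots,\phi_p(\lf_t(n))\bigr)$ becomes equidistributed in the Leibman nilmanifold~$\ns_2^\Lf$. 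The existence of such $\phi_p$ is a consequence of Green--Tao equidistribution theorems for polynomial sequences on nilmanifolds, applied with a generic choice of parameters. Setting $f_p:=f'\circ\phi_p$ then yields $\E_{\Zmod{p}}f_p\to\int_{\T^2}f'$ and $\Sol_\Lf(f_p:\Zmod{p})\to\Sol_\Lf(f':\ns_2)$ by continuity, proving the upper bound.

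The substantial direction is $\lim_{p\to\infty}m_\Lf(\alpha,\Zmod{p})\geq m_\Lf(\alpha,\ns_2)$. Pick $[0,1]$-valued $f_p$ on $\Zmod{p}$ with $\E_{\Zmod{p}}f_p\geq\alpha$ and with $\Sol_\Lf(f_p:\Zmod{p})$ tending to $\liminf_p m_\Lf(\alpha,\Zmod{p})$. I would apply the degree-$2$ arithmetic regularity lemma of Green--Tao to decompose $f_p=f_{\nil,p}+f_{\sml,p}+f_{\unf,p}$, where $f_{\nil,p}$ is a nilsequence of bounded complexity on some degree-$2$ filtered nilmanifold $\mathcal{Y}_p$, $\|f_{\sml,p}\|_2$ is small, and $\|f_{\unf,p}\|_{U^3}\leq\delta$. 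Since the $\Lf$-average is controlled by $\|\cdot\|_{U^3}$ for complexity-$2$ systems (the generalized von Neumann inequality), replacing $f_p$ by $f_{\nil,p}+f_{\sml,p}$ perturbs $\Sol_\Lf$ by at most $O(\delta^c)$, and a truncation argument then returns a $[0,1]$-valued function $\tilde f_{\nil,p}$ whose mean and $\Lf$-average closely match those of $f_p$. The task thus reduces to producing, from the nilsequence data of $\tilde f_{\nil,p}$ on $\mathcal{Y}_p$, a continuous function on $\ns_2$ with mean at least $\alpha-o(1)$ and a matching $\Lf$-average.

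This last step is where I expect the main obstacle, because a generic degree-$2$ nilmanifold $\mathcal{Y}_p$ is non-abelian (for instance a Heisenberg quotient), whereas $\ns_2$ is abelian. The resolution should come from the modelling theory for filtered nilmanifolds via equidistributed maps announced in the introduction: I would seek equidistributed polynomial maps $\ns_2\to\mathcal{Y}_p$ (into the subnilmanifold of $\mathcal{Y}_p$ seen by $\tilde f_{\nil,p}$) and use them to pull $\tilde f_{\nil,p}$ back to a continuous $f'$ on $\ns_2$. The crux is then to show that such equidistributed maps lift to equidistributed maps $\ns_2^\Lf\to\mathcal{Y}_p^\Lf$, so that $\Lf$-averages are preserved in the limit. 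At the structural level this amounts to proving that, for systems of complexity at most~$2$, only the \emph{abelianized} degree-$1$ and degree-$2$ information of $\mathcal{Y}_p^\Lf$ contributes to $\Sol_\Lf$, so that the genuinely non-abelian commutator data is washed out. Verifying this abelian reduction via the Leibman group associated with~$\Lf$ is the main technical hurdle.
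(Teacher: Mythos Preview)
Your overall strategy matches the paper's: both directions are packaged into a transference result (Theorem~\ref{thm:comp2trans}), with the easy direction handled by equidistributed polynomial sequences $\Zmod{p}\to\ns_2$ (Proposition~\ref{prop:easycase}, via Proposition~\ref{prop:per-bal}) and the hard direction by combining the $U^3$ regularity lemma (Theorem~\ref{thm:Zp-to-X}) with the modelling statement that $\ns_2$ models every degree-$2$ filtered nilmanifold (Theorem~\ref{thm:maincasereduc}). Your handling of the uniform and small parts, and the truncation to recover a $[0,1]$-valued function, is also in line with the paper.

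Where your diagnosis goes wrong is the final structural step. You frame it as an ``abelian reduction'' in which, for complexity-$2$ systems, the non-abelian commutator data of $\mathcal{Y}_p$ is ``washed out''. This is not what happens, and the modelling result (Theorem~\ref{thm:maincasereduc}) in fact holds for \emph{all} systems of bounded size, not just those of complexity~$2$; the complexity hypothesis is used only to invoke $U^3$ control in the regularity step. The actual mechanism is a product decomposition (Proposition~\ref{prop:prodecompsoft}): every degree-$2$ filtered nilmanifold is isomorphic to $\nss_1\times\nss_2$, where $\nss_1$ is a genuinely non-abelian $2$-step nilmanifold with its lower central series and $\nss_2$ is a torus with the maximal degree-$2$ filtration. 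The non-abelian factor $\nss_1$ is not discarded; it is modelled by the \emph{first} coordinate $(\T,\R_{\bullet(1)})$ of $\ns_2$ via a single linear flow (Proposition~\ref{prop:circ-to-Y1}), while the second coordinate $(\T,\R_{\bullet(2)})$ models $\nss_2$ (Proposition~\ref{prop:2circ-to-2torus}). The commutator contributions are fully present in $\nss_1^\Lf$ and are equidistributed by that linear flow --- a Leibman/Green--Tao equidistribution fact, not a complexity-based cancellation. So the hurdle you should aim at is the product decomposition together with these two separate modelling statements, rather than an argument that non-abelian data is irrelevant.
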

\noindent To illustrate both the result and some of the underlying notions, let us pause to give a more explicit description of \eqref{eq:main} in a specific case.
\begin{example}\label{ex:4AP}
Consider the system $\Lf_4:n\in \Z^2 \mapsto (n_1,n_1+n_2,n_1+2n_2,n_1+3n_2)$ corresponding to 4-term arithmetic progressions, a central example of a system of complexity 2. On $\ns_1$, the Leibman nilmanifold corresponding to $\Lf_4$ is the `usual' 2-dimensional subgroup of $\T^4$ consisting of all 4-term progressions: $\ns_1^{\Lf_4}=\{(x_1,x_1+x_2,x_1+2x_2,x_1+3x_2):x_1,x_2\in \T\}$. If instead we equip $\T$ with the filtration $\R_{\bullet(2)}$, letting $\nss=(\T,\R_{\bullet(2)})$, then the corresponding Leibman nilmanifold for $\Lf_4$ is the following 3-dimensional subgroup of $\T^4$, which contains $\ns_1^{\Lf_4}$: $\nss^{\Lf_4}=\{(y_1, y_1+y_2, y_1+2y_2+y_3, y_1+3y_2+3y_3): y_i\in \T\}$. (Similar calculations have been used as examples in previous works; see in particular \cite[Section 3]{GTarith}.)\\
\indent Now, on $\ns_2$, the Leibman nilmanifold for $\Lf_4$ is $\ns_1^{\Lf_4}\times \nss^{\Lf_4}$, a 5-dimensional subtorus of $\ns_2^4$. Then, Theorem \ref{thm:main} tells us that $\lim_{p\to\infty} m_{\Lf_4}(\alpha,\Zmod{p})$ is equal to the infimum, taken over all continuous functions $f:\T^2\to [0,1]$ with $\int_{\T^2} f \geq \alpha$, of the following integral:
\[
\int_{\T^5} f\binom{x_1}{y_1}\; f\binom{x_1+x_2}{y_1+y_2}\;
 f\binom{x_1+2x_2}{y_1+2y_2+y_3}\; f\binom{x_1+3 x_2}{y_1+3y_2+3y_3} \ud\mu_{\T^5}(x_1,x_2,y_1,y_2,y_3),
\]
where we write elements $\theta$ of $\T^2$ in column form, $\theta=\binom{\theta_1}{\theta_2}$.
\end{example}
\bigskip
\noindent Our proof of Theorem \ref{thm:main} relies on a \emph{transference result} for systems of complexity 2, stated in  Section \ref{sec:transf} (see Theorem \ref{thm:comp2trans}). Roughly speaking, the main part of this result tells us that given a $[0,1]$-valued function $f_1$ on a large group $\Zmod{p}$, there is a continuous $[0,1]$-valued function $f_2$ on $\ns_2$ that resembles $f_1$ in the sense that certain averages of $f_2$ across systems of complexity at most 2 have roughly the same value as those of $f_1$. To motivate our choice of $\ns_2$ further, in Subsection \ref{subsec:counterex} we show that, if $f_1,f_2$ are allowed to be complex-valued, then such a result does not hold if instead of $\ns_2$ we use $\ns_1$.

\noindent The proof of our transference result occupies most of the rest of the paper. It relies on the regularity method for the Gowers $U^3$ norm, and also on a notion of \emph{modelling} between filtered nilmanifolds that is central to this paper, defined in Subsection \ref{subsec:outline} (see Definitions \ref{def:multib} and \ref{def:modelling}). Proving the transference result then consists essentially in showing that $\ns_2$ models every filtered nilmanifold of degree at most 2 (see Theorem  \ref{thm:maincasereduc} and Proposition \ref{prop:mcr-implies-ms}). To prove the latter statement, we use in particular a certain decomposition of general filtered nilmanifolds of degree at most 2, which we establish in Section \ref{sec:decomp} (see Proposition \ref{prop:prodecompsoft}).\\
\indent In Section \ref{sec:h3}, we rule out an a-priori plausible approach to generalizing Theorem \ref{thm:main} for higher-complexity systems, by showing that the degree-3 filtered torus $(\T^3,\R_{\bullet(1)}\times \R_{\bullet(2)}\times \R_{\bullet(3)})$ does not model all filtered nilmanifolds of degree at most 3. The counterexample involves the Heisenberg nilmanifold with a certain degree-3 filtration (see Proposition \ref{prop:countercomp3}). This example shows that the possibility to use a torus of this kind in a transference result such as Theorem \ref{thm:comp2trans} is a specific feature of systems of complexity at most 2. This phenomenon provides a new illustration of the marked increase in difficulty in passing from systems of complexity 2 to sytems of complexity at least 3 (this increase has been observed in different contexts before, for example in \cite[\S 11]{GSz}).\\

\section{A transference result for systems of complexity 2}\label{sec:transf}

\bigskip

\noindent To prove the convergence of quantities such as $m_k(\alpha,\Zmod{p})$, it is natural to try to transfer a function on a large group $\Zmod{p}$ to another such group $\Zmod{q}$ while keeping certain $\Lf$-averages of this function roughly unchanged. This idea is already present in Croot's proof of Theorem \ref{thm:croot} in \cite{croot:3APminconv}, where it is implemented by an argument that uses Fourier analysis and that is extendable to any system of complexity 1 (in \cite{croot:3APminconv} it is used just for the central case of 3-term arithmetic progressions).

Extending this method towards the continuous setting, results such as Theorem \ref{thm:Sisask} can be deduced from transference results that are similar except that now one has to allow either of $\Zmod{p}$ or $\Zmod{q}$ to be replaced with $\T$. This was originally done in \cite{SisaskPhD}, also using Fourier analysis (see \cite[Proposition 4.2.8]{SisaskPhD}, and also \cite[Corollary 3.8]{CS1}).

Theorem \ref{thm:main} can be obtained in a similar spirit as an immediate consequence of the following result, but our proof of this result will use \emph{quadratic} Fourier analysis.\\

\begin{theorem}[Transference for systems of complexity 2 between $\Zmod{p}$ and $\ns_2$]\label{thm:comp2trans}\hfill\\
\noindent For every $\epsilon > 0$, there exists $C>0$ such that the following holds. Let $\ns=\Zmod{p}$ with $p \geq C$ and let $\ns'=\ns_2$. Then for every function $f : \ns \to [0,1]$, there exists a continuous function $f' : \ns' \to [0,1]$ such that, for every system $\Lf$ of linear forms of size\footnote{We recall this notion of size in Definition \ref{def:size&comp}.} at most $1/\epsilon$ and complexity at most 2, we have $\abs{ S_{\Lf}(f : \ns) - S_\Lf(f' : \ns') } \leq \epsilon$. The last sentence holds also with $\ns=\ns_2$ \textup{(}with $f$ being measurable\textup{)} and $\ns'=\Zmod{p}$.
\end{theorem}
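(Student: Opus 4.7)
\noindent The plan is to combine the $U^3$ arithmetic regularity lemma on $\Zmod{p}$ with the fact (Theorem \ref{thm:maincasereduc} and Proposition \ref{prop:mcr-implies-ms}) that $\ns_2$ models every filtered nilmanifold of degree at most $2$.

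For the direction $\ns=\Zmod{p}$, $\ns'=\ns_2$, I would first apply the quadratic arithmetic regularity lemma to write $f=f_{\nil}+f_{\sml}+f_{\unf}$, where $f_{\nil}(n)=F(g(n)\Gamma)$ is a $[0,1]$-valued Lipschitz function $F$ on a degree-$2$ filtered nilmanifold $\ns_0=(G/\Gamma,G_\bullet)$ pulled back by an equidistributed polynomial sequence $g$, with $\|f_{\sml}\|_{L^2}\leq \delta$ and $\|f_{\unf}\|_{U^3}\leq \eta$. For every system $\Lf$ of size at most $1/\epsilon$ and complexity at most $2$, the generalised von Neumann inequality bounds $|\Sol_\Lf(f:\Zmod{p})-\Sol_\Lf(f_{\nil}+f_{\sml}:\Zmod{p})|$ by $C(1/\epsilon)\eta$, and a telescoping H\"older argument bounds $|\Sol_\Lf(f_{\nil}+f_{\sml}:\Zmod{p})-\Sol_\Lf(f_{\nil}:\Zmod{p})|$ by $C(1/\epsilon)\delta$. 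Leibman's equidistribution theorem, applied (after a refinement of the regularity output if necessary) to the polynomial lift of $g$ on each Leibman nilmanifold $\ns_0^{\Lf}$, yields $\Sol_\Lf(f_{\nil}:\Zmod{p})\to \Sol_\Lf(F:\ns_0)$ as $p\to\infty$, uniformly over the admissible $\Lf$. Finally the modelling hypothesis supplies a continuous $[0,1]$-valued $f':\ns_2\to[0,1]$ with $\Sol_\Lf(f':\ns_2)$ arbitrarily close to $\Sol_\Lf(F:\ns_0)$, uniformly in $\Lf$. Choosing the precisions successively---first $1/\epsilon$, then $\delta$ and $\eta$, then the regularity complexity, then the modelling quality, and finally $p$ large---makes the four errors sum to at most $\epsilon$.

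For the reverse direction $\ns=\ns_2$, $\ns'=\Zmod{p}$, I would first approximate the measurable $f:\ns_2\to[0,1]$ in $L^1$ by a continuous function using Lusin's theorem, and then set $f'=f\circ\phi$ for a polynomial map $\phi:\Zmod{p}\to\ns_2$ adapted to the filtration $\R_{\bullet(1)}\times\R_{\bullet(2)}$ (for instance $\phi(n)=(\alpha n,\;\beta n+\gamma n(n-1)/2) \bmod 1$ with generic $\alpha,\beta,\gamma\in p^{-1}\Z$). Leibman equidistribution of the induced lift on each $\ns_2^{\Lf}$ then yields $\Sol_\Lf(f':\Zmod{p})\approx \Sol_\Lf(f:\ns_2)$ within the required precision.

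The main obstacle I anticipate is book-keeping rather than any single new ingredient: the nilmanifold $\ns_0$ produced by regularity depends on $f$, so the modelling error and the equidistribution rates must be made uniform over all $\Lf$ of size at most $1/\epsilon$, requiring careful ordering of the parameters. This uniformity is built into Definition \ref{def:modelling} and into Leibman's theorem, but weaving them, together with the generalised von Neumann control and the truncation needed to keep $f'$ in $[0,1]$, into a single clean $\epsilon$-bound is the delicate part of the argument.
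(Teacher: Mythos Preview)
Your forward direction ($\ns=\Zmod{p}$, $\ns'=\ns_2$) is correct and follows the paper's approach essentially verbatim: regularity lemma $\to$ counting on the Leibman nilmanifold $\to$ modelling via Theorem \ref{thm:maincasereduc}, combined exactly as in Proposition \ref{prop:mcr-implies-ms} and Theorem \ref{thm:Zp-to-X}.

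The reverse direction has a genuine gap. The theorem requires that $C$ (and hence the lower bound on $p$) be chosen \emph{before} $f$ is given, uniformly over all measurable $f:\ns_2\to[0,1]$. Your plan---Lusin approximation followed by pullback along a balanced $\phi:\Zmod{p}\to\ns_2$---produces a continuous approximant $f_c$ whose Lipschitz constant depends on $f$, and the equidistribution error in $|\Sol_\Lf(f_c\circ\phi:\Zmod{p})-\Sol_\Lf(f_c:\ns_2)|$ is bounded by $\delta\cdot t\,\|f_c\|_{\Lip}^t$ (cf.\ \eqref{eq:ProdLipBnd}). Since $\|f_c\|_{\Lip}$ can be arbitrarily large as $f$ varies, you cannot fix $\delta$ (and hence $p$) independently of $f$ this way.

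The paper (Proposition \ref{prop:easycase}) closes this gap with a compactness argument: the set of achievable vectors $\big(\Sol_\Lf(h:\ns_2)\big)_{\Lf\in\cL(\epsilon)}$, as $h$ ranges over continuous $[0,1]$-valued functions, is a bounded subset of the finite-dimensional cube $[0,1]^{\cL(\epsilon)}$, so it admits a finite $\epsilon/4$-net of points each realized by some Lipschitz function. The maximum of their Lipschitz norms gives a bound $C'(\epsilon)$ depending only on $\epsilon$, and every measurable $f$ can be routed (via Lusin and $L^1$-continuity of $\Sol_\Lf$) to one of these finitely many representatives. Only then can $p$ be chosen uniformly. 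You should insert this step; without it the reverse direction does not go through as stated.
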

\noindent We split the proof into two cases. The main case $\ns=\Zmod{p}$, $\ns'=\ns_2$ will occupy most of the sequel. The other case, namely $\ns=\ns_2$, $\ns'=\Zmod{p}$, is simpler and is treated briefly in Section \ref{sec:LC-maps}; see Proposition \ref{prop:easycase}. We shall end this section with an outline of the proof of the main case, in Subsection \ref{subsec:outline}. Before that, however, we pause to give a sense in which the nilmanifold $\ns_2$ in Theorem \ref{thm:comp2trans} cannot be simplified.

\subsection{Some counterexamples}\label{subsec:counterex}\hfill

\noindent One may call a filtered nilmanifold playing the role of $\ns_2$ in Theorem \ref{thm:comp2trans} a \emph{continuous model} for systems of complexity at most 2 on groups $\Zmod{p}$. One might then wonder whether simpler models than $\ns_2$ could be used, and in particular whether one of the two components of $\ns_2$, namely $(\T,\R_{\bullet(1)})$, $(\T,\R_{\bullet(2)})$, could already suffice. In this subsection we show that, if in Theorem \ref{thm:comp2trans} we allow complex-valued functions and also averages involving complex conjugation, then the theorem does not hold with any of these two components.\\
\indent We start by ruling out $(\T,\R_{\bullet(1)})=\ns_1$. To this end we shall use the systems of linear forms related to the Gowers norms  $\|\cdot\|_{U^2}$ and $\|\cdot\|_{U^3}$. Let
\begin{equation}\label{eq:U2}
\Lf_{U^2}\colon n\in \Z^3 \;\;  \mapsto \;\; (n_1,\;n_1+n_2,\;n_1+n_3,\;n_1+n_2+n_3),
\end{equation}
and let
\begin{eqnarray}\label{eq:U3}
\Lf_{U^3}\colon n\in \Z^4 & \mapsto & (n_1,\;\;n_1+n_2,\;\;n_1+n_3,\;\;n_1+n_2+n_3,\\
&&\hspace{0.5cm}n_1+n_4,\;\;n_1+n_2+n_4,\;\;n_1+n_3+n_4,\;\;n_1+n_2+n_3+n_4).\nonumber
\end{eqnarray}
For a bounded measurable function $f:Z\to \C$ on a compact abelian group $Z$, we set
\[
\Sol_{\Lf_{U^2}}'(f:Z)=\int_{Z^3}\;\; f(x_1)\;\overline{f(x_1+x_2)}\;\overline{f(x_1+x_3)}\;f(x_1+x_2+x_3)\;\; \ud\mu_{Z^3}(x_1,x_2,x_3),
\]
and we define $\Sol_{\Lf_{U^3}}'(f:Z)$ similarly (in particular $\Sol_{\Lf_{U^d}}'(f:Z)=\|f\|_{U^d(Z)}^{2^d}$, $d=2,3$).\\ 
\indent We then have the following result, which shows that Theorem \ref{thm:comp2trans} does not hold for all functions taking values in the unit disc $\cD\subset \C$ if we replace $\ns_2$ with $\ns_1$.
\begin{proposition}\label{prop:deg-1-circle-out}
For each prime $p$, let $f_p:\Zmod{p}\to \cD$, $x\mapsto e(x^2/p)=\exp(2\pi i\,x^2/p)$. There exists an absolute constant $\delta>0$ such that, for every $p$  sufficiently large, there is no continuous function $f:\T\to \cD$ such that
\begin{equation}\label{eq:counterex}
\max\left \{\;\Big|\,\Sol_{\Lf_{U^2}}'(f_p:\Zmod{p})- \Sol_{\Lf_{U^2}}'(f:\T)\,\Big|\,,\; \Big|\,\Sol_{\Lf_{U^3}}'(f_p:\Zmod{p})- \Sol_{\Lf_{U^3}}'(f:\T)\,\Big|\;\right\}\; <\; \delta.
\end{equation}
\end{proposition}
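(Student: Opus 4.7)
The plan is to exploit the ``pure quadratic'' character of $f_p(x) = e(x^2/p)$: on $\Zmod{p}$ its $U^2$-Gowers average decays to $0$ as $p \to \infty$, whereas its $U^3$-Gowers average stays at $1$ for every $p$; no continuous function on $\T$ can reproduce such a gap because the degree-$1$ filtration on $\T$ leaves no room for genuinely quadratic structure.

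First I would verify the computations on $\Zmod{p}$. Using $\Sol_{\Lf_{U^d}}'(f:\Zmod p) = \|f\|_{U^d(\Zmod p)}^{2^d}$ together with the polynomial identity $x^2 - (x+h_1)^2 - (x+h_2)^2 + (x+h_1+h_2)^2 = 2h_1 h_2$, the $U^2$ average collapses to $\E_{h_1, h_2 \in \Zmod p} e(2 h_1 h_2/p) = 1/p$ for every odd prime, by orthogonality of additive characters; this vanishes as $p \to \infty$. The $U^3$ side is immediate: the third discrete derivative of $x \mapsto x^2$ is identically zero, so the integrand equals $1$ and $\Sol_{\Lf_{U^3}}'(f_p:\Zmod p) = 1$.

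The crux is then to establish a uniform Fourier-analytic bound on $\T$: for every measurable $f : \T \to \cD$, there is a continuous $\Phi : [0,1] \to [0,1]$ with $\Phi(0) = 0$ such that $\|f\|_{U^3(\T)}^8 \leq \Phi(\|f\|_{U^2(\T)}^4)$. Expanding the $U^3$ integral in Fourier, solving the linear constraints $\sum_\omega (-1)^{|\omega|} \xi_\omega = 0$ and the analogous constraints in each $h_i$, and parameterizing the solution set as $\xi_\omega = a + \omega \cdot v$ with $(a, v) \in \Z \times \Z^3$, one obtains
\[
\|f\|_{U^3(\T)}^8 = \sum_{v_2, v_3 \in \Z} |B(v_2, v_3)|^2, \quad B(v_2, v_3) := \sum_{a \in \Z} \hat f(a)\,\overline{\hat f(a+v_2)\hat f(a+v_3)}\,\hat f(a+v_2+v_3).
\]
Cauchy--Schwarz in $a$ gives $|B(v_2, v_3)|^2 \leq E(v_2+v_3)\,E(v_3-v_2)$ with $E(w) := \sum_a |\hat f(a)|^2 |\hat f(a+w)|^2$, while a separate Parseval computation identifies $\sum_w E(w)^2 = \int_\T |\langle f, T_\theta f\rangle|^4 d\theta$ and, via the Fourier series $\langle f, T_\theta f\rangle = \sum_k |\hat f(k)|^2 e(-k\theta)$ together with $\|\langle f, T_\cdot f\rangle\|_\infty \leq \|f\|_2^2 \leq 1$, bounds this by $\|f\|_{U^2(\T)}^4$. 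Combining these with the pointwise bound $E(w) \leq \|\hat f\|_\infty^2 \|f\|_2^2 \leq \|f\|_{U^2(\T)}^2$ and exploiting the parity constraint $s \equiv t \pmod 2$ in the change of variables $(v_2, v_3) \leftrightarrow (s, t) = (v_2 + v_3, v_3 - v_2)$, a careful Cauchy--Schwarz step yields the desired $\Phi$.

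Granting the inequality, the proof concludes by contradiction: any continuous $f : \T \to \cD$ satisfying \eqref{eq:counterex} would satisfy $\|f\|_{U^2(\T)}^4 \leq 1/p + \delta$ and $\|f\|_{U^3(\T)}^8 \geq 1 - \delta$, forcing $1 - \delta \leq \Phi(1/p + \delta)$, which fails once $\delta$ is less than some absolute threshold (dictated by the modulus of continuity of $\Phi$ at $0$) and $p$ is sufficiently large. The main obstacle is the Cauchy--Schwarz step at the end of the third paragraph: the crude estimate $\sum_{v_2,v_3} E(v_2+v_3) E(v_3-v_2) \leq \|f\|_2^8$ (obtained by ignoring the parity constraint) is insensitive to $\|f\|_{U^2(\T)}$, so the argument must leverage the identity $\sum_w E(w)^2 \leq \|f\|_{U^2(\T)}^4$ without sacrificing convergence of the sum over the infinite lattice $\Z^2$.
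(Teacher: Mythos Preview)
Your computations on $\Zmod{p}$ are fine, and the overall strategy---show that on $\T$ a bounded function cannot have small $U^2$-average while keeping the $U^3$-average near~$1$---is exactly right. The problem is the inequality $\|f\|_{U^3(\T)}^8 \leq \Phi(\|f\|_{U^2(\T)}^4)$ with $\Phi(0)=0$, which you do not actually prove. You yourself flag the obstacle: after Cauchy--Schwarz you have $\|f\|_{U^3}^8 \le \sum_{s\equiv t\ (2)} E(s)E(t)$, and this sum is simply $\bigl(\sum_{s\text{ even}}E(s)\bigr)^2+\bigl(\sum_{s\text{ odd}}E(s)\bigr)^2\le \bigl(\sum_s E(s)\bigr)^2=\|f\|_2^8\le 1$, which carries no dependence on $\|f\|_{U^2}$. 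The auxiliary identity $\sum_w E(w)^2\le \|f\|_{U^2}^4$ controls the $\ell^2$-norm of $E$, but the sum you need to bound is essentially the square of the $\ell^1$-norm of $E$, and there is no way to pass from one to the other over the infinite lattice $\Z$ without extra decay information that the argument does not supply. So the ``careful Cauchy--Schwarz step'' is not just unwritten; the bound you have reached is genuinely too weak to close.

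The paper avoids this entirely by aiming for a weaker (and sufficient) conclusion. Since $\|f_p\|_{U^3}^8=1$ exactly, one only needs to rule out $\|f\|_{U^3(\T)}^8>1-\delta$ together with $\|f\|_{U^2(\T)}^4<\delta$; one does \emph{not} need $\Phi(0)=0$. For this near-extremal regime the paper invokes the Eisner--Tao characterisation of $L^\infty$ near-extremisers of the $U^d$ norm: if $\|f\|_{U^3(\T)}\ge 1-\epsilon$ with $\|f\|_\infty\le 1$, then $f$ is $L^1$-close to $e(P)$ for some continuous polynomial $P:\T\to\T$ of degree $\le 2$. A short direct lemma (Lemma~\ref{lem:ctspolylin}) then shows that every continuous polynomial $\T\to\T$ is in fact of degree $\le 1$; hence $e(P)$ is a character, and $\|f\|_{U^2(\T)}^4=1-o(1)$, contradicting $\|f\|_{U^2}^4<\delta$ for $\delta$ small. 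This replaces your attempted Fourier inequality (a statement about all level sets of $\|\cdot\|_{U^3}$) with a structural statement about the top level set only, which is both easier to prove and all that the proposition requires.
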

\noindent The function $f_p$ in this result has been used before as a source of examples in the discrete setting (see \cite[\S 4]{GSz}). In particular, a standard calculation shows that $\Sol_{\Lf_{U^3}}'(f_p:\Zmod{p})= 1$ while $\Sol_{\Lf_{U^2}}'(f_p:\Zmod{p})= o(1)_{p\to \infty}$ (see \cite[Exercise 11.1.12]{T-V}). We want to show that this behaviour cannot be reproduced by a continuous $\cD$-valued function on $\T$. This will follow  from a combination of two results.\\
\indent Given compact abelian groups $Z,Z'$, we say that a continuous map $P:Z\to Z'$ is a \emph{polynomial map of degree $\leq d$} if
\begin{equation}\label{eq:abpolymap}
\Delta_{h_1}\cdots \Delta_{h_{d+1}} P(x)=0\;\textrm{ for every }x,h_1,\ldots,h_{d+1}\in Z,
\end{equation}
where $\Delta_h P(x):= P(x+h)-P(x)$.

There is a related definition of polynomial maps between filtered groups, that we recall in Appendix \ref{App}; see Definition \ref{def:polymaps}, after which we also recall the relation between the two definitions.

Of the two results that we shall combine as mentioned above, the first  is the following characterization, due to Eisner and Tao, of bounded functions that are nearly extremal for the $U^d$ norm  \cite[Theorem 1.1]{Eisner-Tao}.
\begin{theorem}[$L^\infty$ near-extremisers on compact abelian groups]\label{thm:ET}  Let $d$ be a positive integer, let $Z$ be a compact abelian group, and let $f \in L^\infty(Z)$ be such that $\|f\|_{L^\infty(Z)} \leq 1$.  If $\|f\|_{U^d(Z)} \geq 1-\epsilon$, then there exists a continuous polynomial map $P: Z \to \R/\Z$ of degree $\leq d-1$ such that $\|f-e(P)\|_{L^1(Z)} = o(1)_{\epsilon \to 0}$.
\end{theorem}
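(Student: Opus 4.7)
The plan is to prove the theorem by induction on $d$. For the base case $d=1$ one has $\|f\|_{U^1(Z)} = \big|\int_Z f\,\ud\mu_Z\big|$; setting $c = \int_Z f$ and $\theta = c/|c|$ (a constant, hence a continuous polynomial map of degree $0$ from $Z$ to $\R/\Z$), the assumption $|c| \geq 1-\epsilon$ together with the pointwise bound $\tRe(\overline{\theta}f) \leq 1$ gives $\|f - \theta\|_{L^2(Z)}^2 \leq 2\epsilon$, so the conclusion holds with $P$ the constant satisfying $e(P) = \theta$.

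For the inductive step I would use the shift-derivative identity
\[
\|f\|_{U^d(Z)}^{2^d} \;=\; \int_Z \|\Delta_h f\|_{U^{d-1}(Z)}^{2^{d-1}}\,\ud\mu_Z(h),\qquad\text{where } \Delta_h f(x) := f(x+h)\,\overline{f(x)}.
\]
Each integrand is bounded by $1$, so the hypothesis $\|f\|_{U^d}^{2^d} \geq 1 - O(\epsilon)$ combined with Markov's inequality forces $\|\Delta_h f\|_{U^{d-1}(Z)} \geq 1 - \epsilon^{1/2}$ for all $h$ outside an exceptional set $B \subset Z$ of measure $o(1)_{\epsilon\to 0}$. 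Applying the inductive hypothesis to $\Delta_h f$ for each good $h$ produces a continuous polynomial map $P_h : Z \to \R/\Z$ of degree $\leq d-2$ with $\|\Delta_h f - e(P_h)\|_{L^1(Z)} = o(1)_{\epsilon \to 0}$, uniformly in $Z$.

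The main obstacle is the cocycle-integration step: recombining the family $\{P_h\}_{h\in Z\setminus B}$ into a single polynomial $P$ of degree $\leq d-1$ whose additive derivative $P(\cdot+h) - P(\cdot)$ approximately matches $P_h$. Using the multiplicative identity $\Delta_{h_1+h_2}f(x) = \Delta_{h_1}f(x+h_2)\cdot\Delta_{h_2}f(x)$ together with the $L^1$ approximations above, one deduces that the family $\{P_h\}$ forms an approximate polynomial cocycle of degree $\leq d-2$, in the sense that $P_{h_1+h_2}(x) - P_{h_1}(x+h_2) - P_{h_2}(x)$ is constant in $x$ modulo $\Z$ for a $(1-o(1))$-fraction of pairs $(h_1,h_2) \in Z^2$. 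A Cauchy--Schwarz plus spectral argument on this approximate additivity, combined with a Balog--Szemer\'edi--Gowers-type refinement adapted to the polynomial setting, then produces a genuine polynomial $P : Z \to \R/\Z$ of degree $\leq d-1$ satisfying $P(\cdot+h) - P(\cdot) \equiv P_h \pmod \Z$ on a positive-density set of $h$. A final averaging step -- writing $g := f\cdot\overline{e(P)}$, observing that $g(\cdot+h)\,\overline{g(\cdot)} \approx 1$ in $L^1$ for many $h$, and invoking the identity $\int\!\!\int|g(x+h)-g(x)|^2 = 2\big(\|g\|_2^2 - |\int g|^2\big)$ -- upgrades this to $\|f - c\cdot e(P)\|_{L^1(Z)} = o(1)$ for some unimodular constant $c$, which is then absorbed into $P$. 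A streamlined alternative that bypasses the explicit cocycle-integration is a compactness argument via ultraproducts: if the conclusion failed with some fixed $\delta > 0$ along a sequence $(Z_n,f_n)$, one would extract a limit function $f^*$ on a Loeb probability space satisfying $\|f^*\|_\infty \leq 1$ and $\|f^*\|_{U^d} = 1$ exactly, apply the equality case of the Gowers norm characterization (extremisers are polynomial phases of degree $\leq d-1$), and descend back to the $f_n$ to derive a contradiction.
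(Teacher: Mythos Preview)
The paper does not prove this theorem; it is quoted as a result of Eisner and Tao \cite[Theorem 1.1]{Eisner-Tao} and invoked as a black box in the proof of Proposition~\ref{prop:deg-1-circle-out}. There is therefore no proof in the paper to compare your attempt against.

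That said, your outline is broadly in the spirit of the Eisner--Tao argument: induction on $d$ via the derivative identity $\|f\|_{U^d}^{2^d}=\int_Z \|\Delta_h f\|_{U^{d-1}}^{2^{d-1}}\,\ud\mu_Z(h)$, followed by a step that reassembles the family $\{P_h\}$ into a single polynomial $P$. Your base case and the Markov step are correct. However, the cocycle-integration portion is where the genuine content lies, and your sketch does not close the gap. In particular, from the $L^1$ approximations one only obtains that $e\big(P_{h_1+h_2}(x)-P_{h_1}(x+h_2)-P_{h_2}(x)\big)$ is close to $1$ in $L^1(Z)$ for most $(h_1,h_2)$; this does \emph{not} by itself force the argument to be constant in $x$ on a general compact abelian group, since continuous polynomial phases of degree $\leq d-2$ on $Z$ need not be rigid in that way (contrast this with Lemma~\ref{lem:ctspolylin}, which is specific to tori). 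The passage from an approximate cocycle to an exact polynomial requires a careful measurable-selection and rigidity argument that is carried out in \cite{Eisner-Tao}; a Balog--Szemer\'edi--Gowers-type mechanism is not the tool used there, and it is not clear how you would make it produce a \emph{continuous} polynomial on $Z$ rather than merely a measurable one. Your ultraproduct alternative is a legitimate strategy in principle, but it presupposes the exact equality case on Loeb spaces (extremisers of $\|\cdot\|_{U^d}$ are polynomial phases), which itself requires defining and analysing Gowers norms and polynomial phases on such spaces and is not a free reduction.
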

\noindent The second result concerns continuous polynomial maps between tori. (A special case appeared already in work of the second named author; see \cite[Lemma 1.3]{Szegedy:HFA}.) 
\begin{lemma}\label{lem:ctspolylin} Let $m\geq 1$ and fix any filtrations $\T^m_\bullet,\T_\bullet$ on $\T^m,\T$. Then every continuous polynomial map $f:(\T^m,\T^m_\bullet) \to (\T,\T_\bullet)$ is a polynomial map $\T^m\to \T$ of degree $\leq 1$.
\end{lemma}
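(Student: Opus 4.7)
The plan is to reduce to the stronger statement that every continuous map $P:\T^m\to\T$ satisfying the abelian polynomial condition \eqref{eq:abpolymap} for some finite $d$ must be affine. The hypothesis that $P$ is a continuous filtered polynomial map of finite degree implies \eqref{eq:abpolymap} for some $d$, via the standard relation between filtered polynomial maps and iterated discrete derivatives recalled in Appendix \ref{App} after Definition \ref{def:polymaps}; note that on the abelian groups $\T^m$ and $\T$ all commutators are trivial, so the filtered-polynomial condition collapses to plain iterated first-difference conditions.

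Given this reduction, the first step is to pass to the universal cover. Since $\R^m$ is simply connected, I would lift $P\co\pi:\R^m\to\T$ to a continuous map $\tilde P:\R^m\to\R$, where $\pi:\R^m\to\T^m$ is the quotient. For each lattice vector $v\in\Z^m$ the function $x\mapsto \tilde P(x+v)-\tilde P(x)$ is continuous and $\Z$-valued on the connected space $\R^m$, hence equals a constant integer $n_v$.

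Next I would show that $\tilde P$ is an honest polynomial of total degree at most $d$ on $\R^m$. Lifting the identity $\Delta_{h_1}\cdots\Delta_{h_{d+1}}P(x)=0$ from $\T$ to $\R^m$ yields a continuous $\Z$-valued function of $(\tilde x,\tilde h_1,\dots,\tilde h_{d+1})$ on the connected space $\R^{m(d+2)}$; by connectedness this function is a constant, and setting any $\tilde h_i$ to $0$ shows the constant is $0$. Hence $\tilde P$ lies in $\R[x_1,\dots,x_m]$ with total degree at most $d$.

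Finally I would combine both conclusions: the identity $\tilde P(x+e_j)-\tilde P(x)=n_{e_j}$ forces the polynomial $\tilde P(x)-n_{e_j}x_j$ to be periodic of period $1$ in $x_j$, hence independent of $x_j$, since a nonconstant real polynomial in one variable cannot be periodic. Iterating over $j=1,\dots,m$ yields $\tilde P(x)=c+\sum_{j=1}^m n_{e_j}\,x_j$ for some $c\in\R$. Descending back to the torus, $P$ is affine, and in particular a polynomial map of degree $\leq 1$ in the sense of \eqref{eq:abpolymap}. The main conceptual obstacle is packaging the lifting argument correctly: once one notices that continuous $\Z$-valued functions on connected spaces are constant, both the polynomiality of $\tilde P$ and the affine structure follow cleanly, so the essence of the lemma is topological (the universal cover carries polynomial structure back and forth cleanly) rather than algebraic, and crucially the specific filtrations $\T^m_\bullet$ and $\T_\bullet$ never enter after the initial reduction.
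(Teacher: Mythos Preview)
Your argument is correct and takes a genuinely different route from the paper's proof. The paper argues entirely on the torus: it reduces by repeated differencing to the case of a map of degree $\leq 2$ but not $\leq 1$, observes that each $\Delta_h f$ is then an affine character $x\mapsto n_h\cdot x+\theta_h$, and uses uniform continuity together with the discreteness of $\widehat{\T^m}$ to force $n_h$ to be constant in $h$, hence zero. Your proof instead passes to the universal cover, shows that the lift $\tilde P:\R^m\to\R$ has vanishing $(d{+}1)$-fold real differences, concludes that $\tilde P$ is a genuine polynomial in $\R[x_1,\dots,x_m]$, and then uses $\Z^m$-quasi-periodicity to force it to be affine.

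The trade-off is this. Your covering-space argument is conceptually clean and makes transparent why continuity is the crucial hypothesis; once on $\R^m$, the affine conclusion drops out from elementary facts about real polynomials. However, the step ``vanishing $(d{+}1)$-fold finite differences plus continuity implies $\tilde P\in\R[x_1,\dots,x_m]$'' is a classical but nontrivial theorem (essentially Fr\'echet's 1909 result on polynomial functional equations), which you invoke without proof. The paper's approach, by contrast, is fully self-contained: it needs only the classification of continuous homomorphisms $\T^m\to\T$ and a simple connectedness argument, at the cost of a slightly more hands-on induction. If you want your version to be self-contained as well, one option is to bypass Fr\'echet: after subtracting $\sum_j n_{e_j}x_j$, the resulting function $Q$ descends to a continuous map $\T^m\to\R$ with vanishing $(d{+}1)$-fold differences, so for any $x,h$ the bounded sequence $n\mapsto Q(x+nh)$ is a polynomial in $n$, hence constant, and $Q$ is constant.
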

\begin{proof} Note first that the map $f$, which is assumed to be polynomial in the sense of Definition \ref{def:polymaps}, is then also polynomial of degree at most $d$ in the sense of \eqref{eq:abpolymap}, where $d$ is the degree of the filtration  $\T_\bullet$. Now suppose for a contradiction that $f$ is not of degree $\leq 1$. Then by repeated applications of difference operators $\Delta_h$ to $f$, we eventually obtain a map $g$ such that $\Delta_{h_3}\Delta_{h_2}\Delta_{h_1}g(x)=0$ for all $x,h_1,h_2,h_3\in \T^m$, but $\Delta_{h_2}\Delta_{h_1}g(x)$ is not zero for all $x,h_1,h_2$. It therefore suffices to obtain a contradiction if $f$ is such a map. In other words, we may suppose that $f$ is polynomial of degree $\leq 2$ but not of degree $\leq 1$. Now note that, in this case, for each fixed $h$ the map $\Delta_h f$ is polynomial of degree $\leq 1$. Setting $\theta_h=\Delta_h f(0)$, we therefore have that $x\mapsto \Delta_h f(x)-\theta_h$ is a continuous homomorphism $\T^m\to \T$.  Hence there is $n_h\in \Z^m$ such that $\Delta_h f(x)=n_h\cdot x+\theta_h$.  (Here $n\cdot x=n(1)x(1)+\cdots+n(m) x(m)$.) On the other hand, the uniform continuity of $f$ implies that the map $h\mapsto \Delta_h f-\theta_h$ is continuous from $\T^m$ to the space of continuous functions $\T^m\to\T$. (On the latter space we use the topology given by the metric $d(f,g)=\sup_{x\in \T^m} \|f(x)-g(x)\|_\T$.) 
The continuous homomorphisms $\T^m\to\T$, being of the form $x\mapsto n\cdot x$ for some $n\in\Z^m$, form a discrete set in this space. Since $\T^m$ is connected, the map $h\mapsto n_h$ must therefore be constant,  so there is $n\in \Z^m$ such that $\Delta_h f(x)= n\cdot x+\theta_h$ for all $h,x$. Applying this with $h=0$ implies that $n\cdot x+\theta_0=0$ for all $x\in \T^m$, which implies that $n=0$. Thus $\Delta_h f(x)=\theta_h$ for all $h,x$, and so $\Delta_k \Delta_h f(x)=0$ for all $h,k,x$, which contradicts that $f$ is not polynomial of degree $\leq 1$.
\end{proof}

\begin{proof}[Proof of Proposition \ref{prop:deg-1-circle-out}]
If $f:\T\to \cD$ is a continuous function satisfying
\[
\Big|\,\Sol_{\Lf_{U^3}}'(f_p:\Zmod{p})- \Sol_{\Lf_{U^3}}'(f:\T)\,\Big|<\delta,
\]
then by Theorem \ref{thm:ET} together with the fact that $\Sol_{\Lf_{U^3}}'(f_p:\Zmod{p})=1$, there is some continuous polynomial map $P:\T\to \T$ of degree $\leq 2$ such that $\|f-e(P)\|_{L^1(\T)}\leq \epsilon$, where $\epsilon=\epsilon(\delta)\to 0$ as $\delta\to 0$. By Lemma \ref{lem:ctspolylin} we have in fact that $P$ is of degree $\leq 1$. But then a simple calculation shows that $\Sol_{\Lf_{U^2}}'(f:\T)= \Sol_{\Lf_{U^2}}'(e(P):\T)-o(1)=1-o(1)$ as $\delta\to 0$. Since $\Sol_{\Lf_{U^2}}'(f_p:\Zmod{p})=o(1)_{p\to\infty}$, the inequality $\Big|\,\Sol_{\Lf_{U^2}}'(f_p:\Zmod{p})- \Sol_{\Lf_{U^2}}'(f:\T)\,\Big|<\delta$ fails for $\delta$ sufficiently small and $p$ sufficiently large.
\end{proof}

\begin{remark}\label{rem:Ynot}
Ruling out $\nss=(\T,\R_{\bullet(2)})$ is a much simpler task. It suffices to use the trivial form $\Lf_0:n\in \Z\mapsto n$ (note that $\Sol_{\Lf_0}(f:Z)=\int_Z f\ud\mu_Z$ for any compact abelian group $Z$) and some system of complexity 1, say $\Lf:n\in \Z^2\mapsto (n_1,n_2,n_1+n_2)$. Indeed, we can find a set $A_p\subset \Zmod{p}$ such that for large $p$ the following inequality fails for every continuous function $f:\T\to[0,1]$:
\[
\max\left \{\;\Big|\,\Sol_{\Lf_0}(1_{A_p}:\Zmod{p})- \Sol_{\Lf_0}(f:\nss)\,\Big|\,,\; \Big|\,\Sol_{\Lf}(1_{A_p}:\Zmod{p})- \Sol_{\Lf}(f:\nss)\,\Big|\;\right\}\; <\; 1/100.
\]
To see this, note first that $\nss^\Lf=\T^3$ (using Definitions \ref{def:LeibGp} and \ref{def:Leibnil}), so any such function $f$ satisfies $\Sol_\Lf(f:\nss)=(\int_\T f)^3=\Sol_{\Lf_0}(f:\nss)^3$. By contrast, for a function $f_p:\Zmod{p}\to [0,1]$, the average $S_\Lf(f_p:\Zmod{p})=\E_{n_1,n_2\in \Zmod{p}}f(n_1)f(n_2)f(n_1+n_2)$ can be much smaller than $\Sol_{\Lf_0}(f_p:\Zmod{p})^3$. For instance, let $f_p=1_{A_p}$ where $A_p$ is the large sumfree set $(p/3,2p/3)\subset \Zmod{p}$.
\end{remark}
\begin{remark}
It would be interesting to establish stronger variants of Proposition \ref{prop:deg-1-circle-out} that would rule out $\ns_1$ as a continuous model for systems of complexity 2 more decisively. For instance, one can ask whether in that proposition $e(x^2/p)$ can be replaced with some $[0,1]$-valued function. Such a real-valued counterexample could require allowing more than two systems of complexity at most 2 in the maximum in \eqref{eq:counterex}. In closer relation to Theorem \ref{thm:main}, it would be conclusive to find a system $\Lf$ of complexity 2 for which $\lim_{p\to \infty} m_\Lf(\alpha,\Zmod{p})< m_\Lf(\alpha,\ns_1)$ for some $\alpha$. In particular, we do not know whether this inequality  holds for the system $\Lf_4$ of 4-term arithmetic progressions.
\end{remark}
\begin{question}
Does the inequality  $m_{\Lf_4}(\alpha,\ns_2)< m_{\Lf_4}(\alpha,\ns_1)$ hold for some $\alpha \in [0,1]$?
\end{question}
\noindent As a counterpoint to the last remark, let us mention that in the next subsection we shall introduce a definition of modelling, concerning filtered nilmanifolds, and that with respect to this definition $\ns_2$ can definitely not be replaced with $\ns_1$ (as detailed in Remark \ref{rem:X1not} below). This definition is in some sense more natural than (though related to) the notion of a  continuous model for systems on $\Zmod{p}$ considered above.

\subsection{A reduction of the main case of Theorem \ref{thm:comp2trans}}\label{subsec:outline}\hfill

\noindent Our central aim from now on is to establish the following result.
\begin{proposition}\label{prop:maincase}
For every $\epsilon >0$, there exists $C>0$ such that the following holds. For every function $f: \Zmod{p}\to [0,1]$ with $p\geq C$, there exists a continuous function $f' : \T\times \T \to [0,1]$ such that, for every system $\Lf$ of linear forms of size at most $1/\epsilon$ and complexity at most 2, we have $\abs{ S_{\Lf}(f': \ns_2) - S_\Lf(f: \Zmod{p}) } \leq \epsilon$. 
\end{proposition}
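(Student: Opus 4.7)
The plan is to combine the regularity method for the Gowers $U^3$-norm with a modelling statement for filtered nilmanifolds of degree $\leq 2$. First I would apply a quantitative $U^3$-regularity decomposition $f = f_\nil + f_\sml + f_\unf$, where $f_\nil = F\co \eqm$ for some Lipschitz $F$ on a filtered nilmanifold $\ns=(G/\Gamma, G_\bullet)$ of degree $\leq 2$ (with dimension, Lipschitz norm and complexity bounded in terms of $\epsilon$), $\|f_\sml\|_{L^1}$ is small, and $\|f_\unf\|_{U^3}$ is small. Since $\Lf$ has complexity $\leq 2$ and size $\leq 1/\epsilon$, the generalized von Neumann theorem controls the $\Lf$-average of $f_\unf$ by a power of $\|f_\unf\|_{U^3}$, while H\"older handles $f_\sml$. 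Thus up to an error of order $\epsilon$, one reduces to matching $\Sol_\Lf(f_\nil:\Zmod{p})$ by a continuous function on $\ns_2$.

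Next I would invoke the quantitative Green--Tao/Leibman factorization theorem for polynomial sequences to write $\eqm$ as a smooth-times-equidistributed-times-rational decomposition, reducing to the case where the polynomial sequence is equidistributed in a rational subnilmanifold $\ns'\subseteq \ns$ of the same degree. Equidistribution of $n\mapsto (\eqm(\lf_1(n)),\dots,\eqm(\lf_t(n)))$ in the Leibman nilmanifold $(\ns')^\Lf$ then gives $\Sol_\Lf(f_\nil:\Zmod{p}) \approx \Sol_\Lf(F:\ns')$ uniformly over the systems $\Lf$ under consideration. At this point the problem becomes: produce a continuous $f':\ns_2\to [0,1]$ with $\Sol_\Lf(f':\ns_2) \approx \Sol_\Lf(F:\ns')$ for every $\Lf$ of complexity $\leq 2$ and size $\leq 1/\epsilon$.

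I would solve this last task using the \emph{modelling} property announced in Subsection \ref{subsec:outline}, namely that $\ns_2$ models every filtered nilmanifold of degree $\leq 2$. The proof of that modelling statement would proceed by combining the product decomposition of degree-$\leq 2$ filtered nilmanifolds into elementary building blocks, essentially quotients of $(\T,\R_{\bullet(1)})$ and $(\T,\R_{\bullet(2)})$ (the decomposition result from Section \ref{sec:decomp}), with the observation that $\ns_2$ is precisely the product of these two building blocks. One can then construct an equidistributed polynomial map $\ns_2\to \ns'$ intertwining the respective Leibman nilmanifolds (in particular, pushing $\mu_{\ns_2^\Lf}$ to $\mu_{(\ns')^\Lf}$), pull $F$ back through it, and continuously regularize to obtain the desired $f':\ns_2\to[0,1]$.

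The main obstacle is the modelling step, and above all the product decomposition. In the non-abelian case, where $[G,G]$ sits nontrivially inside $G_{(2)}$, reproducing the resulting quadratic commutator behaviour on a continuous model requires the degree-$2$ torus factor $(\T,\R_{\bullet(2)})$; the counterexample of Proposition \ref{prop:deg-1-circle-out} confirms that this factor cannot be dispensed with. Once the modelling is established, careful bookkeeping of the errors from the $U^3$-regularity decomposition, the equidistribution factorization, and the modelling transport then yields the stated $\epsilon$-approximation.
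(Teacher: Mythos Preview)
Your proposal is correct and follows essentially the paper's own route: apply the $U^3$ regularity/counting package (Theorem \ref{thm:Zp-to-X}) to replace $\Sol_\Lf(f:\Zmod{p})$ by $\Sol_\Lf(F:\ns)$ for a bounded-complexity degree-$\leq 2$ nilmanifold $\ns$, then invoke the modelling result Theorem \ref{thm:maincasereduc} (proved via the product decomposition of Section \ref{sec:decomp} together with Propositions \ref{prop:circ-to-Y1} and \ref{prop:2circ-to-2torus}) to obtain a balanced map $\phi_g:\ns_2\to\ns$ and set $f'=F\circ\phi_g$, exactly as in Proposition \ref{prop:mcr-implies-ms}. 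One small inversion in your last paragraph: in the decomposition $\ns\cong\nss_1\times\nss_2$, the commutator $[G,G]$ sits in the lower-central-series factor $\nss_1$ (modelled by $(\T,\R_{\bullet(1)})$), and it is the \emph{complement} of $[G,G]$ in $G_{(2)}$ that forces the degree-2 torus factor $(\T,\R_{\bullet(2)})$.
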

\begin{remark}\label{rem:L1cont}
This result is equivalent to the seemingly weaker version in which $f'$ is only claimed to be measurable. (The version in terms of measurable functions is also natural in that it accommodates the indicator functions of measurable sets.) The equivalence can be seen using Lusin's theorem to approximate $f'$ in $L^1(\T^2)$ by a continuous function $h$, and then the fact (which will also be used later) that for all measurable functions $f',h$ on $\T^2$ bounded by 1 and every system $\Lf:\Z^D\to \Z^t$ of finite complexity, we have
\[
\abs{ S_{\Lf}(f': \ns_2) -S_{\Lf}(h: \ns_2) } \leq t\, \norm{f'-h}_{L^1(\T^2)}.
\]
This $L^1$-continuity of $S_\Lf(\cdot:\ns_2)$ follows from the fact that any coordinate projection $(\T^2)^t\to\T^2$ restricts to a surjective map from $\ns_2^{\Lf}$ to $\T^2$. (Note that the complexity assumption for $\Lf$ implies that none of the linear forms is the 0-form; see Definition \ref{def:size&comp} and the paragraph thereafter.)
\end{remark}
\noindent For a continuous function $F:X\to \C$ on a metric space $(X,d)$, we define the Lipschitz norm of $F$ by $\|F\|_{\Lip}=\|F\|_{\infty}+\sup_{x,y\in X, x\neq y} \frac{|F(x)-F(y)|}{d(x,y)}$.

\begin{defn}[Quantitative equidistribution]\label{def:equid}
Let $(\ns,\mu)$ be a probability space and let $\ns'$ be a metric space with a Borel probability measure $\mu'$. We say that a measurable map $\phi:\ns \to \ns'$ is \emph{$\delta$-equidistributed} if for every Lipschitz function $F:\ns'\to\C$ we have
\begin{equation}
\left| \int_{\ns} F\circ \phi\; \ud\mu - \int_{\ns'} F \;\ud\mu'\right|\leq \delta\, \norm{F}_{\Lip}.
\end{equation}
\end{defn}
\noindent On a filtered nilmanifold with a fixed Mal'cev basis $\X$ (see Definition \ref{def:Filnil}) there is a convenient metric, which was defined in  \cite[Definition 2.2]{GTOrb}. Using this metric we shall now define a notion of equidistribution concerning certain maps between filtered nilmanifolds, which plays a central role in the sequel. This is a strong form of equidistribution in the sense that, in addition to the given map being equidistributed, we require various multiparameter versions of the map, corresponding to Leibman nilmanifolds for systems of linear forms, also to be equidistributed. In the special case of polynomial sequences, this is related to the  stronger notion of \emph{irrationality} from \cite{GTarith} (see also \cite{CS3}).

Given filtered groups $(G,G_\bullet), (G',G_\bullet')$, a polynomial map $\eqm\in \poly_0(G_\bullet,G'_\bullet)$ (see Definition \ref{def:polymaps}), and subgroups $\Gamma,\Gamma'$ of $G,G'$ respectively, we say that $\eqm$ is \emph{$(\Gamma,\Gamma')$-consistent} if for every $\gamma\in \Gamma$ the map $x\mapsto \eqm(x)^{-1}\eqm(x\gamma)$ is $\Gamma'$-valued on $G$. Note that if $g$ is $(\Gamma,\Gamma')$-consistent then $x\Gamma \mapsto g(x)\Gamma'$ is a well-defined map $G/\Gamma\to G'/\Gamma'$.  We  denote this map by $\phi_g$ and say that it is \emph{induced} by $g$. Note also the following composition property: if $g\in \poly_0(G_\bullet,G'_\bullet)$ is $(\Gamma,\Gamma')$-consistent and $g'\in \poly_0(G'_\bullet,G''_\bullet)$ is $(\Gamma',\Gamma'')$-consistent, then $g'\circ g$ is in $\poly_0(G_\bullet,G''_\bullet)$ (by composition of polynomial maps; see the paragraph after Definition \ref{def:polymaps}) and is $(\Gamma,\Gamma'')$-consistent.

The strong notion of equidistribution is the following.
\begin{defn}[Balanced map]\label{def:multib}
Let $\ns=(G/\Gamma,G_\bullet)$ be a possibly disconnected filtered nilmanifold (see Remark \ref{rem:disconnected}), let $\ns'=(G'/\Gamma',G'_\bullet,\X')$ be a filtered based nilmanifold (see Definition \ref{def:Filnil}), and let $\eqm:G\to G'$ be a continuous $(\Gamma,\Gamma')$-consistent map in $\poly_0(G_\bullet,G'_\bullet)$. For $\delta>0$ we say that $\phi_\eqm:\ns\to\ns'$ is $\delta$\emph{-balanced} if for each system $\Lf:\Z^D\to\Z^t$ of linear forms of size at most $1/\delta$ the following map is $\delta$-equidistributed:\footnote{There is a slight abuse of notation in \eqref{eq:multib} in that we are implictly using the identification of a nilmanifold $\ns^\Lf$ with a subnilmanifold of $\ns^t$, via the embedding $x\Gamma^\Lf\mapsto x\Gamma^t$. Note also that the metric used on ${\ns'}^{\,\Lf}$ for $\delta$-equidistribution here is the restriction of the metric on ${\ns'}^{\,t}$ related to the Mal'cev basis $\X'$; see \cite[Appendix A]{CS3}.}
\begin{equation}\label{eq:multib}
\phi_g^t\colon \ns^\Lf\;\;\to\;\; {\ns'}^{\,\Lf},\quad x\Gamma^t\;\;\mapsto\;\; \eqm^t(x){\Gamma'}^{\,t}= \big(g(x_1),\dots,g(x_t)\big){\Gamma'}^{\,t}.
\end{equation}
\end{defn}

\begin{remark} We allow $\ns$ to be possibly disconnected here (in the sense of Remark \ref{rem:disconnected}) so that the notion of a balanced map can concern maps defined on a filtered finite abelian group $\ns$. Thus, for instance, if $G=\Z$ with the lower central series  and $\Gamma=p\,\Z$, we have that $g:G\to G'$ is $(\Gamma,\Gamma')$-consistent if and only if $g$ is a polynomial sequence in $\poly_0(\Z,G'_\bullet)$ that is \emph{$p$-periodic mod $\Gamma'$} in the sense of \cite[Definition 2.10]{CS3}. In this case,  note that $\phi_g$ is $\delta$-balanced if the polynomial $g$ is sufficiently \emph{irrational} in the sense of \cite[Definition 4.7]{CS3}.
\end{remark}

\noindent This notion of a balanced map is related to that of a \emph{balanced morphism}, a natural analogue in the category of nilspaces (see  \cite[Definition 1.2]{Szegedy:HFA}); we shall not detail this relation in this paper.\\
\indent In Definition \ref{def:multib} we are implicitly using the property that $\eqm^t(x)$ is in the Leibman group $(G',G_\bullet')^{\Lf}$ whenever $x\in (G,G_\bullet)^\Lf$. This property is nontrivial but a short proof can be given using known facts (see Proposition \ref{prop:polykeepLeib}).\\
\indent Note also that if $\phi_\eqm: \ns\to\ns'$ is $\delta$-balanced then in particular, using the relation between the metrics corresponding to $\X'$ and ${\X'}^t$ (see \cite[Lemmas A.3 and A.4]{CS3}) we have, for every Lipschitz function $F:\ns'\to\C$,
\begin{equation}\label{eq:ProdLipBnd}
\left| \Sol_\Lf(F\circ \phi_\eqm:\ns) - \Sol_\Lf(F:\ns')\right|\leq \delta \big\|F^{\otimes^t}\big\|_{\Lip({\X'}^t)}\leq \delta\,t\,\norm{F}_{\Lip(\X')}^t.
\end{equation}
(Here $F^{\otimes^t}$ denotes the function $X^t\to \C$, $(x_1,\dots,x_t)\Gamma^t\mapsto F(x_1\Gamma)\cdots F(x_t\Gamma)$.)\\
\indent With the notion of balanced maps we can now give one of the central definitions of this paper.
\begin{defn}[Modelling filtered nilmanifolds]\label{def:modelling}
Let $\ns,\ns'$ be filtered nilmanilfolds. We say that $\ns$ \emph{models} $\ns'$ if there exists a Mal'cev basis $\X'$ on $\ns'$ such that for every $\delta>0$ there exists a $\delta$-balanced map $\phi_g:\ns\to\ns'$.
\end{defn}
\noindent The basis $\X'$ is used just to have the metric structure underlying the notion of balanced map, but note that if the main statement holds for one such basis then it holds for any other. One can see this using results relating the metrics corresponding to two given bases; see for instance \cite[Lemma A.17]{GTOrb}. Note also that modelling induces a preorder on the set of filtered nilmanifolds. We discuss this further at the end of Section \ref{sec:remarks}.\\
\indent The first main tool that we use to prove Proposition \ref{prop:maincase} is the following consequence of the regularity method for the Gowers $U^3$ norm. 
\begin{theorem}\label{thm:Zp-to-X}
For every $\delta >0$ there exists $C>0$ such that the following holds. For every function $f : \Zmod{p} \to [0,1]$ with $p\geq C$, there is a filtered based nilmanifold $\ns=(G/\Gamma, G_\bullet, \X)$ of degree at most $2$ and complexity at most $C$, and a continuous function $F : G/\Gamma \to [0,1]$ with $\norm{F}_{\Lip(\X)} \leq C$, such that for every system $\Lf$ of integer linear forms of size at most $1/\delta$ and complexity at most 2, we have
\begin{equation}\label{eq:Zp-to-X}
\abs{\, \Sol_\Lf(f : \Zmod{p}) - \Sol_\Lf(F: \ns )\,} \leq \delta.
\end{equation}
\end{theorem}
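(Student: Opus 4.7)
The plan is to derive Theorem \ref{thm:Zp-to-X} from the arithmetic regularity method for the $U^3$ norm, in the form developed by Green--Tao and refined (with modelling-type applications in mind) in previous work with Sisask.

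First, given $\delta > 0$ I fix auxiliary parameters $\eta, \sigma > 0$ and a growth function $\omega : \N \to (0,1]$ to be chosen later, and apply the arithmetic $U^3$ regularity lemma to $f : \Zmod{p} \to [0,1]$. This yields a constant $M = M(\eta, \sigma, \omega)$, a filtered based nilmanifold $\ns = (G/\Gamma, G_\bullet, \X)$ of degree at most $2$ and complexity at most $M$, a polynomial sequence $g \in \poly_0(\Z_{\bullet(1)}, G_\bullet)$ that is $p$-periodic mod $\Gamma$ and is $\omega(M)$-irrational in the sense of \cite[Definition 4.7]{CS3}, and a decomposition
\[
f = f_{\nil} + f_{\sml} + f_{\unf},
\]
where $f_{\nil}(n) = F(g(n)\Gamma)$ for a Lipschitz function $F : G/\Gamma \to [0,1]$ with $\|F\|_{\Lip(\X)} \leq M$, $\|f_{\sml}\|_{L^2(\Zmod{p})} \leq \sigma$, and $\|f_{\unf}\|_{U^3(\Zmod{p})} \leq \eta$. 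All three pieces are $[0,1]$-valued up to harmless truncation.

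Second, I expand $\Sol_\Lf(f : \Zmod{p})$ multilinearly using $f = f_{\nil} + f_{\sml} + f_{\unf}$ into $3^t$ terms, where $t$ is the number of forms in $\Lf$. Since $\Lf$ has complexity at most $2$ and size at most $1/\delta$ (so $t \leq C(\delta)$), the generalized von Neumann theorem bounds every term containing at least one factor of $f_{\unf}$ by $O_t(\eta)$, and every remaining term containing at least one factor of $f_{\sml}$ by $O_t(\sigma^{1/2})$ via Cauchy--Schwarz applied coordinate by coordinate. Choosing $\eta$ and $\sigma$ small in terms of $\delta$ and $t$, these error terms sum to at most $\delta/2$, leaving
\[
\Sol_\Lf(f : \Zmod{p}) \;=\; \Sol_\Lf(f_{\nil} : \Zmod{p}) \,+\, O(\delta/2).
\]

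Third, I identify $\Sol_\Lf(f_{\nil} : \Zmod{p})$ with an average of $F^{\otimes t}$ over the orbit of the multiparameter polynomial map $n \in \Zmod{p}^D \mapsto (g(\lf_1(n)), \dots, g(\lf_t(n))) \Gamma^t$, whose image lies in the Leibman nilmanifold $\ns^\Lf$ by Proposition \ref{prop:polykeepLeib}. The Green--Tao--Leibman quantitative equidistribution theorem on nilmanifolds, applied to this orbit on $\ns^\Lf$, states that if $g$ is sufficiently irrational (i.e.\ the growth function $\omega$ was chosen to decay fast enough relative to the complexity of $\ns^\Lf$ and the size of $\Lf$), then this orbit is $\eta'$-equidistributed in $\ns^\Lf$ with $\eta'$ as small as desired. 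Taking $\eta' \leq \delta / (2 t M^t)$ and invoking the Lipschitz bound \eqref{eq:ProdLipBnd} adapted to this setting yields
\[
\big| \Sol_\Lf(f_{\nil} : \Zmod{p}) - \Sol_\Lf(F : \ns) \big| \;\leq\; \delta/2.
\]
Combining the two estimates gives \eqref{eq:Zp-to-X}.

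The main obstacle is not any individual step (each is standard once the correct statement is invoked) but the uniform quantitative coupling between the parameters. One must ensure that the growth function $\omega$ delivered to the regularity lemma is chosen so that, for \emph{every} system $\Lf$ of complexity at most $2$ and size at most $1/\delta$ simultaneously, the pushforward of $g$ onto the corresponding Leibman nilmanifold $\ns^\Lf$ is equidistributed to the required precision. Since the number, dimensions, and complexities of the Leibman nilmanifolds $\ns^\Lf$ arising are bounded in terms of $M$ and $1/\delta$, one can build a single function $\omega$ that dominates the reciprocals of the equidistribution thresholds for all of them at once, closing the circular dependence between the regularity parameters and the equidistribution bounds.
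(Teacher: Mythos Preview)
Your proposal is correct and follows essentially the same route as the paper's proof: apply the $U^3$ regularity lemma from \cite{CS3} to decompose $f$, control the contributions of $f_{\sml}$ and $f_{\unf}$ to $\Sol_\Lf$ by standard bounds (Cauchy--Schwarz and the generalized von Neumann inequality respectively), and then convert $\Sol_\Lf(f_{\nil}:\Zmod{p})$ into $\Sol_\Lf(F:\ns)$ via the counting/equidistribution lemma for irrational polynomial sequences. Two minor remarks: the regularity lemma as stated in \cite{CS3} gives $\|f_{\unf}\|_{U^3}\leq 1/\mathcal{F}(M)$ rather than a fixed $\eta$, though for this application either parametrization works since the von Neumann bound depends only on the size of $\Lf$; and the paper is slightly more explicit that $F$ must be made $[0,1]$-valued by taking real parts and truncating (which does not increase $\|F\|_{\Lip}$), a point you absorb into ``harmless truncation''.
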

\noindent This was essentially proved in \cite{CS3} using arguments and results from \cite{GTarith,Szegedy:HFA}, but for the above version we need some small modifications, so for completeness we include a proof in Appendix \ref{App} (see Theorem \ref{thm:Zp-to-X-app}).\\
\indent Given Theorem \ref{thm:Zp-to-X}, proving Proposition \ref{prop:maincase} reduces to establishing the following.
\begin{theorem}\label{thm:maincasereduc}
Let $\ns_2=(\T^2,\R_{\bullet(1)}\times \R_{\bullet(2)})$ and let $\ns$ be a filtered nilmanifold of degree at most $2$. Then $\ns_2$ models $\ns$.
\end{theorem}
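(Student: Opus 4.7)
My plan is to produce, for each $\delta > 0$, an explicit polynomial map $g = g_\delta : \R^2 \to G$ whose induced map $\phi_g : \ns_2 \to \ns$ is $\delta$-balanced. Since $g$ maps a $2$-dimensional source $\ns_2$ into a potentially higher-dimensional $\ns = G/\Gamma$, the image will be a $2$-dimensional submanifold that must wind densely through $\ns$ to have small Lipschitz discrepancy from Haar; the map will accordingly become increasingly ``high-frequency'' as $\delta \to 0$. The first step is to invoke Proposition \ref{prop:prodecompsoft} to decompose $\ns$ into atomic pieces, using that modelling is easily seen to respect the natural operations involved (finite products, finite-index extensions, and similar).

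On an atomic piece, I fix a Mal'cev basis adapted to a splitting $\log G = V_1 \oplus V_2$ with $V_2 = \log G_{(2)}$, and take
\[
g(x_1, x_2) \;=\; \exp(x_1\, u)\, \exp(x_2\, v),
\]
for $u \in \log G$ and $v \in V_2$, with $\exp(u), \exp(v) \in \Gamma$. The resulting $g$ is automatically $(\Z^2, \Gamma)$-consistent: for an integer shift $\gamma = (n_1,n_2)$, the ratio $g(x)^{-1}g(x+\gamma)$ equals $\exp(n_1 u)\exp(n_2 v) \in \Gamma$, using that $\exp(x_2 v)$ is central to commute past $\exp(-x_1 u)$. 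A crucial structural feature is that, when $G$ is genuinely non-abelian, the Mal'cev coordinates of $\exp(x_1 u)$ contain a quadratic-in-$x_1$ term in the central direction (arising from Baker--Campbell--Hausdorff when $u$ has components in non-commuting basis vectors), realising genuine degree-$2$ polynomial behaviour in $\ns$ from the filtered source $\ns_2$.

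Balancedness is the main technical step. For each system $\Lf$ of size at most $1/\delta$ and complexity at most $2$, the induced map $\phi_g^t : \ns_2^\Lf \to \ns^\Lf$ lands in the Leibman nilmanifold, and its $\delta$-equidistribution reduces, via a quantitative equidistribution theorem on nilmanifolds (in the spirit of Green--Tao), to the non-vanishing of a bounded family of horizontal characters on $\ns^\Lf$ when pulled back by $\phi_g^t$; the size of this family is controlled by a polynomial in $1/\delta$. By choosing $u, v$ with sufficiently large Mal'cev coordinates in $\log \Gamma$ (i.e.\ large ``winding numbers''), this non-vanishing follows from Weyl-type Diophantine arguments. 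The main obstacle I expect is doing this uniformly: a single pair $(u, v)$ must simultaneously satisfy integrality ($\exp(u), \exp(v) \in \Gamma$) and non-triviality against all relevant characters arising from systems $\Lf$ of size at most $1/\delta$. This is where Proposition \ref{prop:prodecompsoft} is essential, since the decomposition provides the structural flexibility---essentially, enough independent lattice directions and non-commuting pairs---to make such a simultaneous choice feasible.
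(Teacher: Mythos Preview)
Your strategy matches the paper's: invoke Proposition \ref{prop:prodecompsoft} to write $\ns \cong \nss_1 \times \nss_2$ (a $2$-step nilmanifold with lower central series times a torus with maximal degree-$2$ filtration), model each factor, and combine via products. The paper however separates the two factors more sharply than you do: it shows that $(\T,\R_{\bullet(1)})$ \emph{alone} models $\nss_1$ (Proposition \ref{prop:circ-to-Y1}) and $(\T,\R_{\bullet(2)})$ \emph{alone} models $\nss_2$ (Proposition \ref{prop:2circ-to-2torus}), so that the product map has source $\ns_2$ rather than $\ns_2\times\ns_2$; your template $g(x_1,x_2)=\exp(x_1 u)\exp(x_2 v)$ from the full $\ns_2$ to each atomic piece leaves this coupling implicit, and as written the product step would only yield a balanced map $\ns_2\times\ns_2\to\nss_1\times\nss_2$. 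On $\nss_1$ the paper also uses a one-parameter flow $\theta\mapsto\gamma^\theta$, but obtains balancedness indirectly by interpolating (via the quotient-integral formula) a known $\delta$-balanced \emph{linear} periodic sequence $\Zmod{p}\to\nss_1$ (Proposition \ref{prop:per-bal}), rather than verifying a horizontal-character criterion directly in the continuous setting. On $\nss_2$ the paper's map is precisely your linear $x_2\mapsto x_2 v$ with $v=(1,k,\ldots,k^{m-1})$, and balancedness is checked by an elementary Fourier-truncation argument (Lemma \ref{fourier-lip} plus Lemma \ref{lem:nontrivchar}) rather than any Green--Tao machinery. Two minor points: Definition \ref{def:multib} requires equidistribution for \emph{all} systems of size at most $1/\delta$, not only those of complexity at most $2$; and the claim that modelling respects finite products is not entirely immediate---the paper handles it in Proposition \ref{prop:ReducToTori} using Stone--Weierstrass and Arzel\`a--Ascoli to reduce to tensor-product Lipschitz test functions.
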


\begin{proposition}\label{prop:mcr-implies-ms}
Theorem \ref{thm:maincasereduc} implies Proposition \ref{prop:maincase}.
\end{proposition}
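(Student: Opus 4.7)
The plan is to chain together the two previously stated theorems. Given $\epsilon > 0$, first invoke Theorem \ref{thm:Zp-to-X} with parameter $\delta_1 = \epsilon/2$, obtaining a constant $C_1 > 0$. For any $f : \Zmod{p} \to [0,1]$ with $p \geq C_1$, this produces a filtered based nilmanifold $\ns = (G/\Gamma, G_\bullet, \X)$ of degree at most $2$ and complexity at most $C_1$, together with a continuous $F : \ns \to [0,1]$ satisfying $\|F\|_{\Lip(\X)} \leq C_1$, such that for every system $\Lf$ of size at most $2/\epsilon$ and complexity at most $2$,
\[
\abs{\Sol_\Lf(f:\Zmod{p}) - \Sol_\Lf(F:\ns)} \leq \epsilon/2.
\]
Since any system of size at most $1/\epsilon$ has size at most $2/\epsilon$, this covers the systems relevant to Proposition \ref{prop:maincase}.

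Next, since $\ns$ has degree at most $2$, Theorem \ref{thm:maincasereduc} applies: $\ns_2$ models $\ns$, so for every $\delta_2 > 0$ there is a continuous $(\Gamma_{\ns_2},\Gamma)$-consistent map $\eqm$ whose induced map $\phi_\eqm : \ns_2 \to \ns$ is $\delta_2$-balanced. The candidate function is then $f' := F \circ \phi_\eqm$, which is continuous and $[0,1]$-valued. For any system $\Lf : \Z^D \to \Z^t$ of size at most $1/\epsilon$ (so in particular with $t$ bounded by a quantity $T(\epsilon)$ depending only on $\epsilon$) and complexity at most $2$, the Lipschitz bound \eqref{eq:ProdLipBnd} gives
\[
\abs{\Sol_\Lf(F:\ns) - \Sol_\Lf(f':\ns_2)} \leq \delta_2\, t\, \|F\|_{\Lip(\X)}^t \leq \delta_2\, T(\epsilon)\, C_1^{T(\epsilon)}.
\]

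The key quantitative point is to choose $\delta_2$ \emph{after} having fixed $C_1$ and $\epsilon$: specifically, take $\delta_2 \leq \min\bigl(\epsilon,\; \tfrac{\epsilon}{2\, T(\epsilon)\, C_1^{T(\epsilon)}}\bigr)$, which ensures both that the balanced-map hypothesis covers all systems of size at most $1/\epsilon$ (since $1/\delta_2 \geq 1/\epsilon$) and that the right-hand side above is at most $\epsilon/2$. Combining this with the bound from Theorem \ref{thm:Zp-to-X} via the triangle inequality yields
\[
\abs{\Sol_\Lf(f:\Zmod{p}) - \Sol_\Lf(f':\ns_2)} \leq \epsilon,
\]
which is exactly the conclusion of Proposition \ref{prop:maincase} with constant $C = C_1$.

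There is essentially no obstacle here beyond bookkeeping: the content is concentrated in Theorems \ref{thm:Zp-to-X} and \ref{thm:maincasereduc}. The only subtle point is the order of quantification, namely that $\delta_2$ must be chosen \emph{after} seeing the complexity bound $C_1$ and the size parameter $1/\epsilon$ (so that the exponential factor $C_1^{T(\epsilon)}$ appearing in the Lipschitz estimate can be absorbed). Since Theorem \ref{thm:maincasereduc} grants the existence of $\phi_\eqm$ for \emph{any} prescribed $\delta_2 > 0$ applied to the specific nilmanifold $\ns$ produced from $f$, this order of quantification causes no difficulty.
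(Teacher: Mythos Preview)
Your proof is correct and follows essentially the same approach as the paper: apply Theorem \ref{thm:Zp-to-X} with $\delta_1=\epsilon/2$ to pass from $f$ on $\Zmod{p}$ to a Lipschitz $F$ on a degree-$2$ nilmanifold $\ns$, then invoke Theorem \ref{thm:maincasereduc} to obtain a $\delta_2$-balanced map $\phi_g:\ns_2\to\ns$, set $f'=F\circ\phi_g$, and use \eqref{eq:ProdLipBnd} with $\delta_2$ chosen small enough (in terms of $\epsilon$ and $C_1$) to make the second error at most $\epsilon/2$. Your handling of the order of quantification and the requirement $\delta_2\leq\epsilon$ (so that the balanced-map hypothesis covers all systems of size at most $1/\epsilon$) is slightly more explicit than the paper's, but the argument is the same.
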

\begin{proof}
Given $\epsilon>0$ as in Proposition \ref{prop:maincase}, we first apply Theorem \ref{thm:Zp-to-X}, with parameter $\delta_1$ a function of $\epsilon$ to be fixed later. Thus for every $f:\Zmod{p}\to [0,1]$ with $p\geq C(\delta_1)$ there exists $F:\ns \to [0,1]$ with $\|F\|_{\Lip(\X)}\leq C(\delta_1)$ such that for every system $\Lf:\Z^D\to\Z^t$ of complexity at most 2 and size at most $1/\delta_1$, we have 
$|\Sol_\Lf(f:\Zmod{p})- \Sol_\Lf(F:\ns)|\leq \delta_1$.\\
\indent Now, by Theorem \ref{thm:maincasereduc}, there exists a $\delta_2$-balanced map $\phi_g:\ns_2\to \ns$ (with respect to the metric on $\ns$ given by $\X$), where $\delta_2>0$ is a function of $\epsilon$ that we fix later.\\
\indent Let $f':\ns_2\to [0,1]$ be the continuous function $F\circ \phi_g$. Then by \eqref{eq:ProdLipBnd} we have
\[
|\Sol_\Lf(F:\ns)-\Sol_\Lf(f':\ns_2)|\leq \delta_2\, t\, \|F\|^t_{\Lip(\X)}\leq \delta_2\,\delta_1^{-1}\,C(\delta_1)^{1/\delta_1}.
\]
Letting $\delta_1=\epsilon/2$ and $\delta_2$ such that $\delta_2\,\delta_1^{-1}\,C(\delta_1)^{1/\delta_1}\leq \epsilon/2$, the result follows.
\end{proof}
\noindent To prove Theorem \ref{thm:maincasereduc}, we shall first replace $\ns$ with a product of two simpler filtered nilmanifolds, namely a 2-step nilmanifold $\nss_1$  with lower central series and a torus $\nss_2$ with the maximal degree-$2$ filtration. This is done in the next section. As a consequence,  finding the desired $\delta$-balanced map $\phi_g$ for Theorem \ref{thm:maincasereduc} will be reduced to finding a sufficiently balanced map $\phi_1: \ns_1 \to\nss_1$ and another such map $\phi_2:(\T,\R_{\bullet(2)})\to \nss_2$ (see Proposition \ref{prop:ReducToTori}). We obtain these maps in Sections \ref{sec:LC-maps} and \ref{sec:QT-maps} respectively.

\begin{remark}\label{rem:X1not}
As mentioned at the end of Subsection \ref{subsec:counterex}, in Theorem  \ref{thm:maincasereduc} one cannot replace $\ns_2$ with $\ns_1$. In fact $\ns_1$ does not even model $\nss=(\T,\R_{\bullet(2)})$. To see this, let $\Lf=\Lf_{U^2}$ as in \eqref{eq:U2}. Computing the Leibman nilmanifolds, we find that $\nss^\Lf=\T^4$, whereas $\ns_1^\Lf$ is the 3-dimensional subtorus $\Lf(\T^3)$ of $\T^4$. Since a continuous polynomial map $\phi_g:\T\to \T$ with $\phi_g(0)=0$ must be a homomorphism (by Lemma \ref{lem:ctspolylin}), we have $\phi_g^4(\Lf(\T^3))\subset \Lf(\T^3)$. Therefore, for $\delta$ sufficiently small, for every such map $\phi_g$ we have that $\phi_g^4$ is not $\delta$-equidistributed in $\nss^\Lf$, and so $\phi_g:\ns_1\to \nss$ is not $\delta$-balanced.
\end{remark}
\section{A product decomposition of degree-2 filtered nilmanifolds}\label{sec:decomp}

\noindent The main result of this section describes the structure of a general filtered nilmanifold of degree 2 by decomposing it into the product of two nilmanifolds with very specific filtrations. The formal statement uses the following notions.

\begin{defn}[Isomorphism of filtered nilmanifolds]\label{def:BNilIso}
Let $\ns=(G/\Gamma,G_\bullet)$ and $\ns'=(G'/\Gamma',G'_\bullet)$ be  filtered nilmanifolds of degree $d$. We say that $\ns$ and $\ns'$ are \emph{isomorphic} if there is a map $\theta:G\to G'$ satisfying the following conditions:
\begin{enumerate}
\item $\theta$ is an isomorphism of filtered Lie groups. This means that for each $i\geq 0$ the restriction of $\theta$ to $G_{(i)}$ is an isomorphism of Lie groups $G_{(i)}\to G_{(i)}'$.
\item We have $\theta(\Gamma)=\Gamma'$.
\end{enumerate}
If Mal'cev bases $\X=\{x_1,\dots,x_r\}$, $\X'=\{x_1',\dots,x_s'\}$ are given on $\ns,\ns'$ respectively, then we say that  $\ns,\ns'$ are \emph{isomorphic} as filtered \emph{based} nilmanifolds if there is a map $\theta:G\to G'$ satisfying condition (i) above (so that in particular we have $r=s$) and satisfying also the following condition:
\begin{enumerate}
\item[(ii')] For each $j\in [r]$ we have $\theta(\exp(x_j))=\exp(x_j')$.
\end{enumerate}
\end{defn}

\medskip

\noindent Note that (i) and (ii') together imply (ii), since by the defining properties of Mal'cev bases we have $\Gamma=\{\exp(t_1 x_1)\cdots \exp(t_m x_m):t_j\in \Z\}$.

\medskip
The structural result can be stated as follows.
\begin{proposition}\label{prop:prodecompsoft}
Let $\ns=(G/\Gamma,G_\bullet)$ be a filtered nilmanifold of degree at most 2. Then $\ns$ is isomorphic to $\nss_1\times \nss_2$, where $\nss_1$ is a 2-step nilmanifold with lower central series, and $\nss_2$ is a torus with the maximal degree-2 filtration.
\end{proposition}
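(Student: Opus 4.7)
The plan is to pass to Lie algebras, construct a direct sum decomposition there, and verify that it descends to the required isomorphism of filtered nilmanifolds. Since $G_\bullet$ has degree at most $2$, the inclusion $[G,G_{(2)}]\subseteq G_{(3)}=\{e\}$ forces $G_{(2)}$ to be central in $G$ and $G$ to be $2$-step nilpotent; in particular $[G,G]\subseteq G_{(2)}$, and the exponential map restricts to a group isomorphism $\mathfrak{g}_{(2)}\to G_{(2)}$.

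The first key step is to choose a direct complement of $[\mathfrak{g},\mathfrak{g}]$ inside $\mathfrak{g}_{(2)}$ that is compatible with the lattice $\Gamma\cap G_{(2)}$. Let $L=\log(\Gamma\cap G_{(2)})$ and $\Lambda_0=\log(\Gamma\cap [G,G])$; by the standard rationality properties of filtered nilmanifolds, these are full-rank lattices in $\mathfrak{g}_{(2)}$ and $[\mathfrak{g},\mathfrak{g}]$ respectively. The quotient $L/\Lambda_0$ embeds into the real vector space $\mathfrak{g}_{(2)}/[\mathfrak{g},\mathfrak{g}]$ and is thus torsion-free, so $\Lambda_0$ is saturated in $L$. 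I would then pick a $\Z$-basis $\{z_1,\dots,z_l\}$ of a complementary sublattice $L_2$, and set $\mathfrak{h}_2:=\Span_\R\{z_1,\dots,z_l\}$; this yields $\mathfrak{g}_{(2)}=[\mathfrak{g},\mathfrak{g}]\oplus\mathfrak{h}_2$ and $L=\Lambda_0\oplus L_2$.

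Next, using an adapted Mal'cev basis $\{x_1,\dots,x_r\}$ of $\ns$ in which $\{x_{s+1},\dots,x_r\}$ spans $\mathfrak{g}_{(2)}$, I set $\mathfrak{h}_1:=\R\langle x_1,\dots,x_s\rangle\oplus [\mathfrak{g},\mathfrak{g}]$. This is a vector-space complement of $\mathfrak{h}_2$ in $\mathfrak{g}$, and it is automatically a Lie subalgebra since $[\mathfrak{h}_1,\mathfrak{h}_1]\subseteq [\mathfrak{g},\mathfrak{g}]\subseteq \mathfrak{h}_1$. Centrality of $\mathfrak{h}_2$ gives $\mathfrak{g}=\mathfrak{h}_1\oplus \mathfrak{h}_2$ as Lie algebras, so the multiplication map $\theta\colon H_1\times H_2\to G$ with $H_i:=\exp(\mathfrak{h}_i)$ is a Lie group isomorphism.

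The final step is the lattice and filtration bookkeeping. Given $\gamma=\exp(t_1 x_1)\cdots\exp(t_r x_r)\in\Gamma$, Baker--Campbell--Hausdorff in the $2$-step setting places the first $s$ factors in $H_1$, while the last $r-s$ factors collapse, by commutativity of $G_{(2)}$, to a single $\exp(v)$ with $v\in L$; writing $v=v_0+v_1$ with $v_0\in\Lambda_0\subseteq\mathfrak{h}_1$ and $v_1\in L_2\subseteq\mathfrak{h}_2$ gives $\exp(v_1)\in H_2\cap\Gamma$ and therefore $\gamma\exp(-v_1)\in H_1\cap\Gamma$, yielding $\Gamma=(\Gamma\cap H_1)(\Gamma\cap H_2)$. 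For the filtration, $H_1\cap G_{(2)}=\exp([\mathfrak{g},\mathfrak{g}])=[H_1,H_1]$ (using again that $\mathfrak{h}_2$ is central, so $[\mathfrak{g},\mathfrak{g}]=[\mathfrak{h}_1,\mathfrak{h}_1]$), and $H_2\cap G_{(2)}=H_2$, so $H_1$ carries its lower central series while $H_2$ carries the maximal degree-$2$ filtration. The main obstacle I anticipate is the lattice-saturation step of the second paragraph: without saturation of $\Lambda_0$ in $L$, one only obtains a decomposition up to a finite-index subgroup, which would be insufficient for a bona fide isomorphism of filtered nilmanifolds in the sense of Definition \ref{def:BNilIso}.
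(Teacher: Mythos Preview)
Your argument is correct and follows essentially the same strategy as the paper: split $G_{(2)}$ as $[G,G]\oplus V$ for a $\Gamma$-rational complement $V$, set $H$ to be the span of the first block of a Mal'cev basis together with $[G,G]$, and then verify that $G\cong H\times V$ respects both the filtration (via $[G,G]=[H,H]$) and the lattice. The only technical difference lies in how the complement $V$ is produced: the paper refines the given Mal'cev basis to one that also passes through $[G,G]$ (invoking \cite[Proposition A.10]{GTOrb}) and reads $V$ off as the span of the intermediate basis vectors, which has the advantage of yielding explicit rationality bounds and thereby the quantitative Proposition~\ref{prop:prodecomp}; your lattice-saturation argument is more direct for the qualitative statement alone. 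Your flagged worry is unfounded: the map $L/\Lambda_0\to \mathfrak g_{(2)}/[\mathfrak g,\mathfrak g]$ is injective because $L\cap[\mathfrak g,\mathfrak g]=\Lambda_0$, which follows since $\exp$ is a bijection on the abelian group $G_{(2)}$ and $(\Gamma\cap G_{(2)})\cap [G,G]=\Gamma\cap [G,G]$.
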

\noindent We shall actually prove a refinement of this result, namely Proposition \ref{prop:prodecomp} below, in which an additional metric structure on $\ns$ given by a fixed Mal'cev basis is also approximately conserved by the isomorphism. This refinement can be useful from a quantitative point of view, especially in relation to the regularity lemma for the $U^3$ norm (see Remark \ref{rem:UseOfStruct}).

\medskip
\indent Given two Mal'cev bases $\X,\X'$ on a filtered nilmanifold, we say that $\X'$ is $Q$-\emph{rational relative to }$\X$ if each element from $\X'$ is a linear combination of the elements of $\X$ with rational coefficients of height at most $Q$. (Similar terminology was used in \cite[Definitions 2.4, 2.5]{GTOrb}.)

\begin{proposition}\label{prop:prodecomp}
There is a function $\eta:\R_{>0}\to \R_{>0}$ such that the following holds. Let $Q\geq 2$, and let $\ns=(G/\Gamma,G_\bullet,\X)$ be of degree 2 and complexity at most $Q$. Then there exists a Mal'cev basis $\X'$ for $(G/\Gamma,G_\bullet)$ that is $\eta(Q)$-rational relative to $\X$, such that $\ns':=(G/\Gamma,G_\bullet,\X')$ is isomorphic \textup{(}as a filtered based nilmanifold\textup{)} to $\nss_1\times \nss_2$, where $\nss_1=(H/\Gamma_H,H_{\bullet(1)},\X_H)$ is a 2-step nilmanifold with lower-central series and complexity at most $\eta(Q)$, and $\nss_2=(\T^m,\R^m_{\bullet(2)})$ with the standard basis on $\R^m$ and with $m\leq Q$. 
\end{proposition}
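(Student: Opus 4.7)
The plan is to work at the Lie algebra level, $\mathfrak{g}=\log G$. Since $G_\bullet$ has degree $2$, the inclusions $[G_{(1)},G_{(1)}]\subseteq G_{(2)}$ and $[G_{(1)},G_{(2)}]\subseteq G_{(3)}=\{e\}$ translate to $[\mathfrak{g},\mathfrak{g}]\subseteq \mathfrak{g}_{(2)}$ and $\mathfrak{g}_{(2)}\subseteq \mathfrak{z}(\mathfrak{g})$. The Mal'cev basis $\X$ endows $\mathfrak{g}$ with a $\Q$-structure in which $[\mathfrak{g},\mathfrak{g}]$ and $\mathfrak{g}_{(2)}$ are rational subspaces, and bases for them can be extracted with rational heights bounded by $\poly(Q)$.

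First I would decompose $\mathfrak{g}$ as a Lie algebra. Using elementary $\Q$-linear algebra (applied to the commutator map and to the subspace $\mathfrak{g}_{(2)}$ expressed in the basis $\X$), choose a rational complement $\mathfrak{v}$ of $[\mathfrak{g},\mathfrak{g}]$ inside $\mathfrak{g}_{(2)}$, and a rational complement $\mathfrak{h}_1$ of $\mathfrak{g}_{(2)}$ inside $\mathfrak{g}$, both with $\poly(Q)$-bounded denominators. Set $\mathfrak{h}:=\mathfrak{h}_1\oplus [\mathfrak{g},\mathfrak{g}]$; this is a Lie subalgebra since $[\mathfrak{h},\mathfrak{h}]\subseteq [\mathfrak{g},\mathfrak{g}]\subseteq \mathfrak{h}$. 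Because $\mathfrak{v}$ is central, $\mathfrak{g}=\mathfrak{h}\oplus \mathfrak{v}$ as Lie algebras. Exponentiating yields an internal direct-product decomposition $G=H\times V$, with $H=\exp(\mathfrak{h})$ a 2-step nilpotent Lie group satisfying $[H,H]=[G,G]$, and $V=\exp(\mathfrak{v})\cong \R^m$ a central vector subgroup with $m\leq\dim G_{(2)}\leq Q$.

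Next I would align the lattice $\Gamma$ with this splitting. Rationality of $\mathfrak{h}$ and $\mathfrak{v}$ ensures that $\Gamma_H:=\Gamma\cap H$ and $\Gamma_V:=\Gamma\cap V$ are cocompact lattices in $H$ and $V$ respectively, but a generic choice of complements only makes $\Gamma$ \emph{commensurable} with $\Gamma_H\cdot \Gamma_V$. To get an exact splitting, I would apply the Smith normal form first to the inclusion of finitely generated free abelian groups $\Gamma\cap [G,G]\hookrightarrow \Gamma\cap G_{(2)}$, and then to $\Gamma\cap G_{(2)}\hookrightarrow \Gamma$. The resulting aligned $\Z$-bases let me readjust $\mathfrak{v}$ and $\mathfrak{h}_1$ inside their $\Q$-affine spaces of possible choices so that $\Gamma=\Gamma_H\cdot \Gamma_V$ on the nose and $\Gamma_H\cap \Gamma_V=\{e\}$. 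This adjustment worsens the rationality heights by only a $\poly(Q)$ factor.

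Finally I would assemble the new basis. Take a Mal'cev basis $\X_H$ for $(H/\Gamma_H, H_{\bullet(1)})$ (its complexity is $\poly(Q)$-bounded because $H$ and $\Gamma_H$ are $\poly(Q)$-rationally defined in $G$), and a basis $\X_V=(Z_1,\dots,Z_m)$ of $\mathfrak{v}$ adapted to $\Gamma_V$ so that $V/\Gamma_V\cong \T^m$ under the standard $\R^m$-coordinates. Concatenating them in the order prescribed by the filtration $G_\bullet$ yields a Mal'cev basis $\X'$ of $\ns$ that is $\eta(Q)$-rational relative to $\X$. The isomorphism $G\to H\times V$ induced by the Lie algebra decomposition then satisfies conditions (i) and (ii') of Definition \ref{def:BNilIso} with respect to the product filtration $H_{\bullet(1)}\times (\R^m)_{\bullet(2)}$ on $H\times V$, and sends $\Gamma$ to $\Gamma_H\times \Gamma_V$. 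The main obstacle I anticipate is precisely the lattice alignment step: ensuring that $\Gamma$ \emph{splits} rather than being merely commensurable with a product, while keeping the change of basis quantitatively controlled, is the content of the Smith-form argument, and the bookkeeping there is what ultimately determines the function $\eta$.
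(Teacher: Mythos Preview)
Your approach is correct and conceptually identical to the paper's; the only difference is packaging. The paper observes that $[G,G]$ is a $Q$-rational subgroup of $G_{(2)}$ (since the brackets $[x_i,x_j]$ are $Q$-rational combinations of the $x_k$), and then invokes \cite[Proposition A.10]{GTOrb} as a black box to obtain a Mal'cev basis $\X'$ for $\Gamma$ that is adapted to the refined chain $G\supset G_{(2)}\supset [G,G]\supset\{e\}$ and $Q^{O(1)}$-rational relative to $\X$. Once such an $\X'$ exists, the subgroups $H$ and $V$ are simply taken to be the spans of the appropriate blocks of basis elements, and the lattice splitting $\Gamma=\Gamma_H\cdot\Gamma_V$ is then automatic from the Mal'cev coordinate description of $\Gamma$ together with the centrality of $G_{(2)}$. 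Your Smith-normal-form argument is precisely what that cited proposition does under the hood: choosing $\Z$-bases adapted to a chain of rational sublattices. So you are reproving the relevant special case of that proposition rather than citing it.

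One small imprecision worth fixing: the second inclusion you write, $\Gamma\cap G_{(2)}\hookrightarrow \Gamma$, is not an inclusion of abelian groups since $\Gamma$ need not be abelian. What you want is to pass to the abelianisation and work with $(\Gamma\cap G_{(2)})/(\Gamma\cap[G,G])\hookrightarrow \Gamma/(\Gamma\cap[G,G])$, then lift the resulting complementary $\Z$-basis back to $\Gamma$ to define $\mathfrak{h}_1$. Also, your worry about commensurability versus exact splitting is slightly overstated: because $[G,G]$ and $G_{(2)}$ are divisible (they are real subspaces of a central group), the quotients $(\Gamma\cap G_{(2)})/(\Gamma\cap[G,G])$ and $\Gamma/(\Gamma\cap G_{(2)})$ are automatically torsion-free, so the elementary divisors in the Smith form are all $1$ and the aligned bases give an exact direct-sum decomposition immediately, with no further adjustment needed.
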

\noindent As the proof reveals, we can take $\eta(Q)=Q^K$ for some constant $K$  depending only on the dimension and degree of $(G,G_\bullet)$; in particular, $K$ also depends only on $Q$.
\begin{proof}
In this proof, just as in \cite[Appendix A]{GTOrb}, the constants implicit in  $O(1)$ notations are allowed to depend on the dimension and degree of $(G,G_\bullet)$.\\
\indent The basis $\X$ is adapted to the given filtration $G_\bullet=(G_{(0)},G_{(1)},G_{(2)},\{\id_G\},\ldots)$. First, we claim that there is a Mal'cev basis $\X'$ for $G/\Gamma$, which is $Q^{O(1)}$-rational relative to $\X$, and which refines $\X$ in the sense that $\X'$ is also adapted to $G_\bullet$ and in addition it passes through the subgroup $G_{(3)}:=[G,G]\leq G_{(2)}$.\\
\indent To see this, note first that $G_{(3)}$ is a $Q$-rational subgroup of $G$ relative to $\X$, in the sense of \cite[Definition 2.5]{GTOrb}. Indeed, letting $\frak{g}$ denote the Lie algebra of $G$, we have the fact that the Lie subalgebra $\frak{g}_{(3)}$ corresponding to $G_{(3)}$ is spanned by Lie brackets $[x_i,x_j]$ where $x_i,x_j\in \X$.  (By \cite[Proposition 5.2.1]{C&G} we know that $\frak{g}_{(3)}$ is the $\R$-span of elements $[v,w]$, $v,w\in \frak{g}$. Since $v,w$ are themselves linear combinations of the $x_i$, the fact follows.) By definition of $\X$ being $Q$-rational (see \cite[Definition 2.4]{GTOrb}), each $[x_i,x_j]$ is a $Q$-rational combination of the $x_i$, so these rational combinations span $\frak{g}_{(3)}$, and so we can find a basis of such combinations for $\frak{g}_{(3)}$, whence $G_{(3)}$ is indeed $Q$-rational relative to $\X$. Next, note that since $G_\bullet':=(G_{(0)},G_{(1)},G_{(2)},G_{(3)},\{\id_G\},\ldots)$ is a filtration on $G$, we may apply \cite[Proposition A.10]{GTOrb} with $G'=G$ and the filtrations $G_\bullet, G_\bullet'$, and thus obtain the claimed basis $\X'$.\\
\indent Now we use $\X'$ to define a 2-step nilpotent Lie subgroup $H$ of $G$ and an abelian Lie subgroup $V$, which will yield our decomposition.
Letting $m_i$ denote the dimension of $G_{(i)}$, for $i\in [3]$ (and $m_0=\dim(G)$), we define first
\begin{eqnarray*}
H & = & e^{\R\cdot x_1}\, \cdots \, e^{\R\cdot x_{m_0-m_2}}\,\cdot\, e^{\R\cdot x_{m_0-m_3+1}}\,\cdots\, e^{\R\cdot x_{m_0}}\\
& = & e^{\R\cdot x_1}\cdots e^{\R\cdot x_{m_0-m_2}} \;\cdot [G,G].
\end{eqnarray*}
Thus, letting $\pi$ denote the quotient map $G\to G/[G,G]$, we have that $H$ is the preimage under $\pi$ of the $\R$-span of the vectors $\pi(e^{ x_1}),\ldots,\pi(e^{x_{m_0-m_2}})$. In particular $H$ is a normal Lie subgroup of $G$ (see for instance \cite[Theorem 3.5, p. 18]{O&V}), connected and simply-connected.\\
\indent Next, we define
\begin{eqnarray*}
V & = & e^{\R\cdot x_{m_0-m_2+1}}\,\cdots\, e^{\R\cdot x_{m_0-m_3}}.
\end{eqnarray*}
This is an abelian Lie group (connected and simply-connected), indeed it is just the vector subspace of $G_{(2)}$ spanned by the elements $e^{x_j}$, $j\in (m_0-m_2, m_0-m_3]$.\\
\indent These normal Lie subgroups $H,V$ of $G$ satisfy $H\cap V=\{\id\}$ and $H\cdot V= G$. In fact, we have an isomorphism of Lie groups $\theta: G\to H\times V$ defined by $\theta(g) = (h,v)$ where $g = e^{r_1 x_1}\cdots e^{r_{m_0} x_{m_0}}$, $h= e^{r_1 x_1}\cdots e^{r_{m_0-m_2} x_{m_0-m_2}}\, e^{r_{m_0-m_3+1} x_{m_0-m_3+1}}\cdots e^{r_{m_0} x_{m_0}}$, and $v= e^{r_{m_0-m_2+1} x_{m_0-m_2+1}}\cdots e^{r_{m_0-m_3} x_{m_0-m_3}}$.\\
\indent We claim that $\theta$ is an isomorphism of filtered Lie groups if we endow $H\times V$ with the filtration $H_{\bullet(1)}\times V_{\bullet(2)}$. To prove this we just need to show that $\theta(G_{(2)})=[H,H]\times V$. Since $G_{(2)}= e^{\R\cdot x_{m_0-m_2+1}}\,\cdots \,e^{\R\cdot x_{m_0}}$, by definition of $V$  we have that $\theta(G_{(2)}) = [G,G]\times V$. We also have $[G,G]=[H,H]$.  Indeed, on one hand we have $G\subset H\cdot G_{(2)}$, and on the other hand since $G$ is 2-step nilpotent we have that $G_{(2)}$ lies in the center of $G$ and the commutator map satisfies $[g_1g_2,g_3]=[g_1,g_3]\cdot [g_2,g_3]$, whence $[G,G]\subset [H\cdot G_{(2)},H\cdot G_{(2)}] \subset [H,H]$. This proves our claim.\\
\indent Note that by construction $H$ and $V$ are rational subgroups of $G$ and so by \cite[Theorem 5.1.11]{C&G} we have that the subgroups $\Gamma_H:=\Gamma\cap H$ and $\Gamma_V:=\Gamma\cap V$ are lattices in $H,V$ respectively. Moreover, the following are then Mal'cev bases for $(H/\Gamma_H,H_{\bullet(1)})$ and $(V/\Gamma_V,V_{\bullet(2)})$ respectively:
\[
\X_H= \{x_1,\ldots,x_{m_0-m_2},x_{m_0-m_3+1},\ldots, x_{m_0}\},\hspace{0.5 cm} \X_V= \{x_{m_0-m_2+1},\ldots,x_{m_0-m_3}\},
\]
where $\X_H$ is $Q^{O(1)}$ rational (and $\X_V$ is $0$-rational).  
Letting $\frak{h},\frak{v}$ denote the Lie algebras of $H,V$, the Lie algebra of $H\times V$ is $\frak{h}\oplus \frak{v}$, and then the Mal'cev basis $\X_{H\times V}$ for
\[
\big((H\times V)/(\Gamma_H\times \Gamma_V), H_{\bullet(1)}\times V_{\bullet(2)}\big)
\]
satisfying condition $(ii')$ in Definition \ref{def:BNilIso} is
\[
\{(x_1,0),\ldots,(x_{m_0-m_2}, 0), (0, x_{m_0-m_2+1}),\ldots,(0, x_{m_0-m_3}), (x_{m_0-m_3+1}, 0),\ldots, (x_{m_0}, 0)\}.
\]
Since $(V/\Gamma_V,V_{\bullet(2)},\X_V)$ is isomorphic to $(\T^m,\R^m_{\bullet(2)})$ with the standard basis, the proof is complete.
\end{proof}
\noindent Throughout the sequel, the Mal'cev basis on a torus $\T^m$ is by default the standard basis on $\R^m$, so we shall not specify the basis on filtered tori from now on.
\begin{remark}\label{rem:UseOfStruct}
The regularity lemma for the $U^3(\Zmod{p})$ norm (stated for instance in \cite[Theorem 5.1]{CS3}) can be refined using Proposition \ref{prop:prodecomp}, by replacing the unspecified nilmanifold $\ns$ of degree 2 in that result by a product nilmanifold of the form $\nss_1\times \nss_2$ given by the proposition. Note that in doing so it is important to control the complexity (or quantitative rationality)  of the basis on $\nss_1\times \nss_2$ only in terms of the complexity of $\ns$, so that the metric structures, governing Lipschitz constants etc., can be related in a way that depends only on the complexity of $\ns$. The unspecified isomorphism in Proposition \ref{prop:prodecompsoft} does not a priori enable such a control, but the one in Proposition \ref{prop:prodecomp} enables this easily. Let us state this refined regularity result.

\begin{theorem}\label{thm:periodic_regularity}
Let $s$ be a positive integer, let $\epsilon > 0$, and let $\mathcal{F} : \R_{>0} \to \R_{>0}$ be a growth function. Then there is a real number $M = O_{s,\epsilon,\mathcal{F}}(1)$ such that for any prime number $p\geq N_0(s, \epsilon, \mathcal{F})$ and any function $f : \Zmod{p} \to [0,1]$ there is a decomposition
\[ f = f_\nil + f_\sml + f_\unf \]
with the following properties:
\begin{enumerate}
\item $f_\nil: \Zmod{p} \to [0,1]$ is a $p$-periodic, $\mathcal{F}(M)$-irrational nilsequence of degree at most $s$ and complexity at most $M$, with underlying filtered nilmanifold of the form $\nss_1\times \nss_2$ where $\nss_1$ is a 2-step nilmanifold equipped with the lower central series, and $\nss_2$ is a torus equipped with the maximal degree-2 filtration.
\item $f_\sml : \Zmod{p} \to [-1,1]$ satisfies $\norm{f_\sml}_2 \leq \epsilon$.
\item $f_\unf: \Zmod{p} \to [-1,1]$ satisfies $\norm{f_\unf}_{U^{s+1}} \leq 1/\mathcal{F}(M)$.
\item $f_\nil + f_\sml$ takes values in $[0,1]$. 
\end{enumerate}
\end{theorem}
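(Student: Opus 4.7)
The plan is to start from the existing $U^{s+1}$-regularity lemma \cite[Theorem 5.1]{CS3} and then post-compose its output with the basis change provided by Proposition \ref{prop:prodecomp}, in order to replace the underlying filtered nilmanifold by one of the desired product form. Concretely, given $s$, $\epsilon$, and $\mathcal{F}$, I first choose an auxiliary growth function $\mathcal{F}'$ (to be specified at the end of the argument) and apply the existing regularity lemma to $f$ with parameters $s$, $\epsilon$, $\mathcal{F}'$. This produces a decomposition $f = f_\nil' + f_\sml + f_\unf$ in which $f_\sml$ and $f_\unf$ already satisfy the required bounds, and in which $f_\nil'(n) = F(g(n)\Gamma)$ is a $p$-periodic, $\mathcal{F}'(M')$-irrational nilsequence of degree at most $s$ and complexity at most $M'$, attached to a filtered based nilmanifold $\ns = (G/\Gamma, G_\bullet, \X)$ of degree at most $2$, with $F : G/\Gamma \to [0,1]$ Lipschitz and $g \in \poly_0(\Z, G_\bullet)$ a polynomial sequence.

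Next I apply Proposition \ref{prop:prodecomp} with $Q = M'$. This furnishes a Mal'cev basis $\X'$ on $(G/\Gamma, G_\bullet)$ that is $\eta(M')$-rational relative to $\X$, together with an isomorphism $\theta : G \to H \times \R^m$ of filtered Lie groups carrying $(G/\Gamma, G_\bullet, \X')$ onto the product $\nss_1 \times \nss_2$ described in the proposition, of complexity at most $\eta(M')$. Setting $\tilde F := F \circ \theta^{-1}$ and $\tilde g := \theta \circ g$, one has $f_\nil'(n) = \tilde F\bigl(\tilde g(n)(\Gamma_H \times \Z^m)\bigr)$, so that $f_\nil := f_\nil'$ is now exhibited as a nilsequence on $\nss_1 \times \nss_2$ of the required form. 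Property (iv) is inherited unchanged from the original decomposition.

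The remaining work is quantitative. Both the Lipschitz norm of $\tilde F$ relative to the standard Mal'cev metric on $\nss_1 \times \nss_2$ and the irrationality of the polynomial sequence $\tilde g$ degrade by factors controlled solely by $\eta(M')$: for the Lipschitz norm one invokes the standard metric-comparison results under a rational change of basis, as in \cite[Lemma A.17]{GTOrb}; for irrationality one uses that an $\eta(M')$-rational change of basis alters the denominators of the Taylor coefficients of $g$ by at most a factor $\eta(M')^{O(1)}$. Choosing $\mathcal{F}'$ to grow sufficiently fast relative to $\mathcal{F}$ and $\eta$, and taking $M$ large enough to absorb both $M'$ and $\eta(M')$, then gives the required $\mathcal{F}(M)$-irrationality and the complexity bound $M$. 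I expect the main (though essentially routine) obstacle to be keeping the definitions of irrationality and complexity consistent across the base change, since both are coordinate-dependent quantities whose behaviour under $\eta(M')$-rational transformations must be tracked explicitly.
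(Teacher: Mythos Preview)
Your proposal is correct and follows exactly the approach the paper indicates in Remark~\ref{rem:UseOfStruct}: apply the existing regularity lemma \cite[Theorem~5.1]{CS3} and then post-compose with the controlled basis change from Proposition~\ref{prop:prodecomp}, using the $\eta(M')$-rationality to keep the Lipschitz norm and irrationality losses bounded in terms of $M'$ alone. The paper does not spell out a formal proof beyond that remark, so your write-up is in fact more detailed than the paper's own treatment, but the strategy is identical.
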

\end{remark}

We can now reduce the proof of Theorem \ref{thm:maincasereduc} as follows.

\begin{proposition}\label{prop:ReducToTori}
Suppose that $(\T,\R_{\bullet(1)})$ models every 2-step nilmanifold with lower central series, and that $(\T,\R_{\bullet(2)})$ models every torus with maximal degree-2 filtration. Then Theorem \ref{thm:maincasereduc} holds.
\end{proposition}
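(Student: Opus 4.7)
The plan is to combine the decomposition in Proposition \ref{prop:prodecompsoft} with the two hypothesized modelling statements, by taking a product of balanced maps, one from each coordinate factor of $\ns_2$ to the corresponding factor of the decomposition of $\ns$.

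First I would invoke Proposition \ref{prop:prodecompsoft} to obtain an isomorphism of filtered nilmanifolds $\theta:\ns \to \nss_1\times\nss_2$, where $\nss_1$ is a 2-step nilmanifold with the lower central series and $\nss_2$ is a torus with the maximal degree-2 filtration. Since modelling is manifestly invariant under isomorphism of filtered nilmanifolds (a $\delta$-balanced map composed with $\theta^{-1}$ gives one into $\ns$, after transferring a Mal'cev basis), it suffices to show that $\ns_2$ models $\nss_1\times\nss_2$, with the Mal'cev basis on the target given by the natural union of the bases of the two factors.

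Fix $\delta>0$. By the two modelling hypotheses applied with parameters $\delta_1,\delta_2>0$ to be chosen at the end, I obtain a $\delta_1$-balanced map $\phi_{g_1}:(\T,\R_{\bullet(1)})\to \nss_1$ and a $\delta_2$-balanced map $\phi_{g_2}:(\T,\R_{\bullet(2)})\to \nss_2$. Let $g(t_1,t_2)=(g_1(t_1),g_2(t_2))$. Since polynomiality in the sense of Definition \ref{def:polymaps} and the consistency property both pass to products of filtered groups factor by factor, $g$ lies in $\poly_0\!\bigl((\R,\R_{\bullet(1)})\times(\R,\R_{\bullet(2)}),\,H_{\bullet(1)}\times\R^m_{\bullet(2)}\bigr)$ and is $(\Z^2,\Gamma_H\times\Z^m)$-consistent. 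Hence $g$ descends to a continuous map $\phi_g:\ns_2\to\nss_1\times\nss_2$, which is our candidate.

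To check that $\phi_g$ is $\delta$-balanced, fix a system $\Lf:\Z^D\to\Z^t$ of size at most $1/\delta$. The key structural observation is that for a product filtration the Leibman nilmanifold factors canonically:
\[
(\nss_1\times\nss_2)^\Lf\;\cong\;\nss_1^\Lf\times\nss_2^\Lf,\qquad\ns_2^\Lf\;\cong\;(\T,\R_{\bullet(1)})^\Lf\times(\T,\R_{\bullet(2)})^\Lf,
\]
after reordering coordinates, because the defining constraints of the Leibman group split across factors of a product filtration. Under these identifications, $\phi_g^t$ becomes the product map $\phi_{g_1}^t\times\phi_{g_2}^t$, and the Haar measure on the left is the product of the Haar measures on the factors. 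Given any Lipschitz $F$ on $\nss_1^\Lf\times\nss_2^\Lf$, I would integrate in the $\nss_1^\Lf$ variable with the other frozen: the partial function is Lipschitz with constant bounded by $\|F\|_{\Lip}$, so $\delta_1$-equidistribution costs at most $\delta_1\|F\|_{\Lip}$. The remaining one-variable integrand is again Lipschitz with constant at most $\|F\|_{\Lip}$, and $\delta_2$-equidistribution contributes a further $\delta_2\|F\|_{\Lip}$. Choosing $\delta_1,\delta_2\leq \delta/(2C)$, where $C$ is the dimension-dependent constant relating the Lipschitz norm on the product basis to the norms on the factor bases, yields the required $\delta$-equidistribution, and hence the $\delta$-balance of $\phi_g$.

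I expect the main obstacle to lie not in the Fubini argument itself, which is routine, but in verifying carefully the two compatibility facts it rests on: the splitting $(\nss_1\times\nss_2)^\Lf\cong\nss_1^\Lf\times\nss_2^\Lf$ for product filtrations (a bookkeeping check using the definitions of Leibman group and the product filtration in Appendix \ref{App}), and the comparability between the metric arising from the product Mal'cev basis and the product of the factor metrics (following from \cite[Appendix A]{GTOrb}), so that the Lipschitz constant of $F$ propagates predictably through the partial integration. Once these are in place, quantifier chasing on the parameters $\delta_1,\delta_2$ completes the argument.
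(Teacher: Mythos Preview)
Your proposal is correct and follows the same overall architecture as the paper: invoke the product decomposition, take the product $\phi_{g_1}\times\phi_{g_2}$ of the two assumed balanced maps, and verify that this product is $\delta$-balanced using the splitting $(\nss_1\times\nss_2)^\Lf\cong\nss_1^\Lf\times\nss_2^\Lf$.

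The one genuine difference is in how you pass from balancedness of the factors to balancedness of the product. The paper first approximates the Lipschitz function $F$ on $\nss_1^\Lf\times\nss_2^\Lf$ in sup norm by a finite sum $\sum_{i\leq M} F_{i,1}\otimes F_{i,2}$ of tensor products (via Arzel\`a--Ascoli and Stone--Weierstrass, with $M$ depending on $\delta$ and the complexity of $\ns$), and then handles each tensor product by the trivial ``Fubini for rank-one functions'' estimate. You instead run Fubini directly on $F$: freeze the second variable, apply $\delta_1$-equidistribution to the resulting Lipschitz section, then observe that the fibre integral $y_2\mapsto\int_{\nss_1^\Lf}F(y_1,y_2)\,dy_1$ is again Lipschitz with controlled norm, and apply $\delta_2$-equidistribution. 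Your route is shorter and avoids the compactness detour; the paper's route has the minor advantage that the dependence of $\delta_1,\delta_2$ on the approximation data is made fully explicit through the finite net, but this is not needed here since the statement is qualitative. Both arguments rest on the same two compatibility facts you flag at the end (the Leibman splitting for product filtrations and the comparability of the product metric with the factor metrics), and the paper treats these at the same level of detail you do.
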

\noindent This can be proved using Proposition \ref{prop:prodecompsoft}, but to give a detailed proof it is convenient to use the more precise Proposition \ref{prop:prodecomp}.
\begin{proof}
Let $\ns=(G/\Gamma,G_\bullet)$ be a filtered nilmanifold of degree at most 2, fix any basis $\X$ on $\ns$, and let $\delta>0$. We apply Proposition \ref{prop:prodecomp}, and let $\nss_1=(H/\Gamma_H,H_{\bullet(1)},\X_H)$ and $\nss_2=(\T^m,\R^m_{\bullet(2)})$ be the resulting nilmanifolds and $\theta$ be the resulting isomorphism $H\times\R^m\to G$ satisfying conditions (i) and (ii') from Definition \ref{def:BNilIso}. By assumption, for every $\delta_1,\delta_2>0$ there exists a $\delta_1$-balanced map $\phi_{g_1}: (\T,\R_{\bullet(1)})\to (H/\Gamma_H,H_{\bullet(1)},\X_H)$ and a $\delta_2$-balanced map $\phi_{g_2}: (\T,\R_{\bullet(2)})\to (\T^m,\R^m_{\bullet(2)})$ (recall the notation $\phi_g$ from the paragraph before Definition \ref{def:multib}). Letting $g_1\times g_2$ denote the product map $(r_1,r_2)\mapsto (g_1(r_1),g_2(r_2))$, we now let $g:\R^2\to G$ be the composition $\theta\circ (g_1\times g_2)$. Noting that $\theta$ is a $(\Gamma_H\times \Z^m,\Gamma)$-consistent polynomial map and that $g_1\times g_2$ is a $(\Z^2,\Gamma_H\times \Z^m)$-consistent polynomial map, we have that $g$ is a $(\Z^2,\Gamma)$-consistent map in $\poly_0(H_{\bullet(1)}\times\R^m_{\bullet(2)},G_\bullet)$. It now suffices to show that $\delta_1,\delta_2$ can be chosen in terms of $\delta,\ns$ so that $\phi_g$ is $\delta$-balanced.\\
\indent Let $\overline{\theta}: \nss_1\times \nss_2\to \ns$ denote the homeomorphism induced by $\theta$, and note that $\overline{\theta}$ is a bilipschitz map with constant $O_Q(1)$, where $Q$ is the complexity bound on $\ns$ (this can be checked using results such as \cite[Lemma A.17]{GTOrb}). Given a system $\Lf:\Z^D\to \Z^t$ of size at most $1/\delta$ and a function $F_0:\ns^\Lf\to \C$ with $\|F_0\|_{\Lip}\leq 1$, it follows that $F:=F_0\circ (\overline{\theta}^{\,t})$ is an $O_{Q,\delta}(1)$-Lipschitz function on $(\nss_1\times \nss_2)^\Lf$. Note that $(\nss_1\times \nss_2)^\Lf$ is isomorphic to $\nss_1^\Lf\times \nss_2^\Lf$. Viewing $F$ as a function on the latter product space, we may approximate $F$ within $\delta/4$ in the supremum norm by a finite sum $\sum_{i\in [M]}F_i$, where for each $i$ we have $F_i:(y_1,y_2)\mapsto F_{i,1}(y_1)F_{i,2}(y_2)$ where $F_{i,j}$ is $O_{Q,\delta}(1)$-Lipschitz on $\nss_j^\Lf$ for $j=1,2$. (To see that such an approximation exists, one can first use that for any $C>0$ the set of functions $f$ on $\nss_1^\Lf\times \nss_2^\Lf$ with $\|f\|_{\Lip}\leq C$ is totally bounded in the supremum norm; this follows from the Arzel\`a-Ascoli theorem. In particular, for $C=O_{Q,\delta}(1)\geq \|F\|_{\Lip}$, there is a finite $\delta/8$-net for the set of $f$ on $\nss_1^\Lf\times \nss_2^\Lf$ with $\|f\|_{\Lip}\leq C$. By the Stone-Weierstrass theorem, each function in this net is within $\delta/8$ in the supremum norm from a function $F_i$ of the claimed form; hence $F$ is within $\delta/4$ of such a function.) Now if for $j=1,2$ we have $\big|\int_{(\T,\R_{\bullet(j)})^\Lf} F_{i,j}\circ \phi_{g_j}^t -\int_{\nss_j^\Lf} F_{i,j}\big|\leq \delta_j$, then $\big|\int_{\ns_2^\Lf} F_i\circ (\phi_{g_1}\times \phi_{g_2})^t- \int_{(\nss_1\times\nss_2)^\Lf} F_i\big|\leq \delta_1 \|F_{i,2}\|_\infty+\delta_2 \|F_{i,1}\|_\infty$. It follows that we can choose $\delta_1,\delta_2$ in terms of $\delta,Q$ to obtain that $\big|\int_{\ns_2^\Lf}F_0\circ \phi_g^t-\int_{\ns^\Lf}F_0\big|\leq \delta$, as required.
\end{proof}

\section{Balanced maps from the circle to 2-step nilmanifolds with lower central series}\label{sec:LC-maps}

\noindent Our aim here is to establish the following result.

\begin{proposition}\label{prop:circ-to-Y1}
Let $\nss=(H/\Gamma,H_{\bullet(1)})$ be a 2-step nilmanifold with lower central series, and let $\ns_1=(\T,\R_{\bullet(1)})$. Then $\ns_1$ models $\nss$. 
\end{proposition}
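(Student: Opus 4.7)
\noindent The plan is to reduce, via a structural observation specific to the 2-step setting, to the construction of a single well-chosen 1-parameter subgroup of $H$, and then verify equidistribution on the relevant Leibman nilmanifolds by Fourier analysis.

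First I claim that every continuous $(\Z,\Gamma)$-consistent polynomial map $g \in \poly_0(\R_{\bullet(1)}, H_{\bullet(1)})$ is in fact a 1-parameter subgroup $g(x) = \exp(xL)$ with $\exp(L) \in \Gamma$. Using Mal'cev coordinates of the second kind and the 2-step Baker--Campbell--Hausdorff formula to collect terms, any such $g$ can be put in the form $g(x) = \exp(xL_1 + x^2 L_2)$ with $L_1 \in \mathfrak{h}$ and $L_2 \in \mathfrak{z} := [\mathfrak{h},\mathfrak{h}]$. The centrality of $L_2$ then yields, via BCH, $g(x)^{-1}g(x+1) = \exp(L_1 + (2x+1)L_2)$. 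For this to lie in the discrete subgroup $\Gamma$ for every $x \in \R$, the continuous-in-$x$ factor must be trivial, forcing $L_2 = 0$; the consistency condition then reduces to $\exp(L) \in \Gamma$ with $L = L_1$.

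Next I fix a Mal'cev basis $\X_H = (X_1,\ldots,X_k,Y_1,\ldots,Y_m)$ for $(H,H_{\bullet(1)})$ adapted to the filtration (so $Y_j \in \mathfrak{z}$). Elements $L \in \mathfrak{h}$ with $\exp(L) \in \Gamma$ are parametrized via these Mal'cev coordinates by integer tuples $(n_1,\ldots,n_k,m_1,\ldots,m_m) \in \Z^{k+m}$, corresponding under BCH to $L = \sum_i n_i X_i + \sum_l (m_l + \tfrac{1}{2}\sum_{i<j} n_i n_j c^l_{ij}) Y_l$, where $[X_i,X_j] = \sum_l c^l_{ij} Y_l$ are the structure constants. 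I propose to take $n_i = N^i$ and $m_l = N^{k+l}$ for a large integer parameter $N = N(\delta)$, which ensures both that the horizontal orbit $t \mapsto (Nt,N^2 t,\ldots,N^k t) \bmod \Z^k$ is $O(1/N)$-Lipschitz-close to Haar on the horizontal torus $\T^k$, and that the vertical coordinates of $\exp(tL) \bmod \Gamma$ wind many times as $t$ varies.

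To verify $\delta$-balancedness, I analyse each system $\Lf$ of size at most $1/\delta$ by Fourier analysis on the Leibman nilmanifold $\nss^\Lf$. By Proposition \ref{prop:polykeepLeib}, $\nss^\Lf$ is again a filtered 2-step nilmanifold, so the same framework applies: expanding a test Lipschitz function in the basis of horizontal characters and nilcharacters of $\nss^\Lf$ up to complexity $O(1/\delta)$, the $\delta$-equidistribution of $\phi_g^t$ reduces to bounding finitely many exponential integrals of linear or quadratic polynomials in the source variables whose coefficients are bounded-height integer combinations of $(N^i)$ and $(N^{k+l})$. Standard Weyl-type estimates bound each such integral by $O(N^{-c})$ for some $c > 0$, so every test passes for $N$ sufficiently large.

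The main obstacle will be establishing this bound uniformly over all systems $\Lf$ of size at most $1/\delta$. Fortunately the collection of such systems up to equivalence, and the set of character tests of bounded complexity on each $\nss^\Lf$, are both polynomially bounded in $1/\delta$, so a single large enough $N$ (depending only on $\delta$ and the complexity of $\nss$) works for all tests simultaneously. The 2-step nature of $H$, and hence of each $\nss^\Lf$, is crucial here: it keeps the relevant exponential integrals at most quadratic and thus within reach of Weyl's inequality, and it is also exactly what allowed the structural reduction $L_2 = 0$ to go through in the first step.
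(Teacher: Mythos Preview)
Your approach is quite different from the paper's. The paper does not build the one-parameter subgroup from scratch and verify balancedness by harmonic analysis on each $\nss^\Lf$. Instead it invokes the discrete analogue (Proposition~\ref{prop:per-bal}): there is a $(p\Z,\Gamma)$-consistent linear sequence $g(n)=g_1^n$ such that $n\mapsto g(n)\Gamma$ is already $\delta$-balanced as a map $\Z_p\to\nss$. Periodicity forces $\gamma:=g_1^p\in\Gamma$, so one interpolates to the flow $\phi(\theta)=\gamma^\theta\Gamma$ on $\T$, and a short quotient-integral computation transfers balancedness from $\Z_p$ to $\T$: writing $x=r+n/p$ with $r\in[0,1/p)^D$, one has $\E_{n\in\Z_p^D}F^{\otimes t}\big(\gamma^{\Lf(r)}\gamma^{\Lf(n/p)}\Gamma^t\big)\approx_\delta\int_{\nss^\Lf}F^{\otimes t}$ because left-multiplication by the constant $\gamma^{\Lf(r)}\in G^\Lf$ preserves $\delta$-equidistribution. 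This is a dozen lines, with all the quantitative nilmanifold work (irrationality, the counting lemma) already absorbed in the discrete black box.

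Your direct route is plausible in outline, and the opening observation that $(\Z,\Gamma)$-consistency forces $L_2=0$ is correct (though not logically required: you only need to \emph{exhibit} one balanced map, not classify them all). The real gap is in the verification step. Since $\nss^\Lf$ is a genuine $2$-step nilmanifold rather than a torus, one cannot simply expand a Lipschitz test function in characters and nilcharacters and reduce to oscillatory integrals $\int e(P(y))\,dy$: nilcharacters are not exponentials of scalar phases, so the sentence ``exponential integrals of linear or quadratic polynomials \ldots\ standard Weyl-type estimates'' does not describe an actual computation. What is really needed is a quantitative equidistribution criterion of Green--Tao/Leibman type for the continuous multiparameter setting, reducing the test on $\nss^\Lf$ to its \emph{horizontal} characters. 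Once that reduction is in hand, things do go as you want: since $g$ is a homomorphism, a horizontal character of $\nss^\Lf$ pulls back to a genuine character on the source torus $\ns_1^\Lf$, and your powers-of-$N$ choice forces non-triviality exactly as in Lemma~\ref{lem:nontrivchar}. So the strategy can be completed, but the step you label ``standard'' is precisely the one carrying the weight, and it is a Leibman-type reduction rather than a Weyl-sum estimate. The paper's route avoids this entirely by borrowing the quantitative equidistribution from the discrete case, where it is already established.
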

\noindent From previous work we already have the discrete version of this proposition in which $\ns_1$ is replaced with $\Zmod{p}$; more precisely, we have the following more general result.
\begin{proposition}[Existence of a balanced periodic polynomial sequence]\label{prop:per-bal}\hfill\\
Let $\delta>0$, and let $(G/\Gamma,G_\bullet,\X)$ be a filtered based nilmanifold of complexity at most $m$. Then there exists $C=C(m,\delta)>0$ such that for every prime $p \geq C$ there exists $g \in \poly_0(\Z,G_\bullet)$ that is $(p\Z,\Gamma)$-consistent and such that the map $\phi_g:\Zmod{p}\to G/\Gamma$, $n\mapsto g(n)\Gamma$ is $\delta$-balanced. Moreover, if $G_\bullet=G_{\bullet(1)}$, then $g$ can be taken to be linear.
\end{proposition}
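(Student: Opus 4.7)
The plan is to construct $g$ as a polynomial sequence in $\poly_0(\Z,G_\bullet)$ whose Taylor-type coefficients are rationals with denominators bounded in terms of $p$, so that $g$ is \emph{$p$-periodic mod $\Gamma$} in the sense of \cite[Definition 2.10]{CS3}, while at the same time being \emph{$\mathcal{F}(m,\delta)$-irrational} in the sense of \cite[Definition 4.7]{CS3}, for a suitably rapidly growing function $\mathcal{F}$ to be chosen in terms of $\delta$ alone. Once both properties hold with appropriate quantitative parameters, $\delta$-balancedness will follow from a multiparameter version of the Green--Tao quantitative equidistribution theorem on nilmanifolds, applied on each Leibman nilmanifold $\ns^\Lf$.

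Concretely, I would fix a Mal'cev basis $\X=\{X_1,\dots,X_r\}$ adapted to $G_\bullet$ and parametrize candidate sequences in the form $g(n)=\exp(t_1(n)X_1)\cdots \exp(t_r(n)X_r)$, where the degree of each scalar polynomial $t_j$ is dictated by the filtration degree of $X_j$. The conditions $g\in \poly_0(\Z,G_\bullet)$, $g(0)=\id$, and $(p\Z,\Gamma)$-consistency cut out a parameter space of polynomial sequences indexed by a bounded number (in terms of $m$) of rational coefficients with denominators that are bounded powers of $p$. Within this finite parameter space one finds, by a standard dense-rationals-plus-pigeonhole argument valid for $p\geq C(m,\delta)$, a choice of coefficients avoiding any prescribed finite collection of low-complexity rational subtori, yielding an $\mathcal{F}(m,\delta)$-irrational $p$-periodic sequence.

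Given such a $g$, I would then deduce $\delta$-balancedness as follows. Fix a system $\Lf:\Z^D\to \Z^t$ of size at most $1/\delta$. By Proposition \ref{prop:polykeepLeib}, the composition $g^t\circ\Lf$ is a polynomial map $\Z^D\to G^t$ taking values in the Leibman group of $\Lf$; equivalently, it is a polynomial sequence on the Leibman nilmanifold $\ns^\Lf$ viewed as $(G^t/\Gamma^t,(G_\bullet)^\Lf)$. The multiparameter quantitative equidistribution theorem (in the form of \cite[Theorem 2.9]{GTarith}, adapted to the $p$-periodic setting as in \cite[Theorem 4.10]{CS3}) then asserts that sufficient irrationality of $g$ on $(G_\bullet)^\Lf$ forces the induced map $\phi_g^t$ on $\Zmod{p}^\Lf$ to be $\delta$-equidistributed in $\ns^\Lf$. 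The only point requiring care is that irrationality of $g$ relative to $G_\bullet$ must be transferred to irrationality of $g^t\circ \Lf$ relative to the Leibman filtration, which holds provided $\mathcal{F}$ is chosen to grow fast enough to absorb the polynomial loss in the complexity coming from $\Lf$; since $\Lf$ has size at most $1/\delta$, this loss is itself controlled in terms of $\delta$.

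For the linear case $G_\bullet=G_{\bullet(1)}$, the construction collapses to choosing $g(n)=g_1^n$ with $g_1=\exp\big(\sum_j (a_j/p)\,X_j\big)$ for integers $a_j$: such a $g$ lies in $\poly_0(\Z,G_{\bullet(1)})$ automatically, $(p\Z,\Gamma)$-consistency reduces to $g_1^p\in \Gamma$, and irrationality reduces to the vector $(a_j/p)_j$ being a well-distributed point on a torus of dimension at most $m$, which can be arranged for $p$ large. I expect the main obstacle in the general degree-2 case to be coordinating the $p$-periodicity constraint with the irrationality demand at each level of the filtration simultaneously: both concern rational approximations to the coefficients of $g$, and one must show that the rigid $p$-periodicity constraints do not force these coefficients into a low-complexity rational subfamily. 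This is precisely why $p$ must be taken large relative to both $m$ and $\delta$, and why the argument hinges on the flexibility of the polynomial parametrization combined with the multiparameter Leibman equidistribution result.
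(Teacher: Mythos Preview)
Your proposal is essentially correct and follows the same route as the paper, which does not give a self-contained proof here but simply cites \cite[Proposition 6.1]{CS3} and adds a remark about the linear case. Your sketch reconstructs the argument from that reference: build a $p$-periodic polynomial sequence with controlled rational Taylor coefficients, make it sufficiently irrational by avoiding finitely many low-complexity obstructions (possible once $p$ is large), and then read off $\delta$-balancedness from the counting lemma on each Leibman nilmanifold.

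One point where your write-up is slightly less sharp than the paper's: for the linearity claim when $G_\bullet=G_{\bullet(1)}$, you say irrationality ``reduces to the vector $(a_j/p)_j$ being a well-distributed point on a torus''. That is the right conclusion but not the reason. The paper isolates the actual mechanism: for the lower central series there are no non-trivial $i$-th level characters with $i>1$ (in the sense of \cite[Definition 4.2]{CS3}), so the irrationality condition on $g$ involves only the horizontal (level-1) data, and a linear sequence $g(n)=g_1^n$ has enough freedom in $g_1$ to meet it. For a general degree-2 filtration this fails, which is exactly why higher-degree Taylor coefficients are needed in that case. This is worth stating explicitly, since otherwise your linear-case paragraph reads as though linearity would suffice for any filtration.
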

\noindent A polynomial $g\in \poly_0(\Z,G_\bullet)$ is said to be \emph{linear} if it is of the form $g(n)=g_1^n$, for some $g_1\in G$.\\
\indent Proposition \ref{prop:per-bal} is essentially \cite[Proposition 6.1]{CS3}. To obtain the additional linearity claim for $g$ above, the main fact used is that for the lower-central series $G_{\bullet(1)}$ there are no non-trivial $i$\emph{-th level characters}  with $i>1$ (see \cite[Definition 4.2]{CS3}). It is then a simple task to find $g_1$ so that $g(n)=g_1^n$ has the desired properties (see \cite[\S 6]{CS3}).\\
\indent From the `discrete time'  result Proposition \ref{prop:per-bal}, we shall deduce the `continuous time' result Proposition \ref{prop:circ-to-Y1}. This can be done using the quotient-integral formula. Let us illustrate this in the simplest setting, namely the case of equidistribution just for 1-parameter orbits: if $g_1^n\Gamma$ is equidistributed in $G/\Gamma$, then for every $g_0\in G$ the sequence $g_0g_1^n\Gamma$ is also equidistributed in $G/\Gamma$. In particular, for every Lipshitz function $F$ on $G/\Gamma$ with $\|F\|_{\Lip(\X)}\leq 1$ and every $r\in [0,1/p)$ we have\footnote{For $a,b\in \C$ we write $a\approx_\epsilon b$ if and only if $|a-b|\leq \epsilon$.}
\[
\E_{n\in \Zmod{p}} F(g_1^{rp+n}\Gamma)\approx_\epsilon \int_{\ns} F \ud\mu_{\ns}.
\]
The periodicity and linearity of $g$ imply that $\theta\mapsto g_1^{\theta p}\Gamma$ is a well-defined (continuous) map $\T\to G/\Gamma$. Then, by the quotient integral formula \cite[Theorem 1.5.2]{principlesHA}, we have
\begin{eqnarray*}
\int_{\T} F(g_1^{\theta p} \Gamma) \ud\mu_\T(\theta) & = & \int_{r\in [0,1/p)} \E_{n\in \Zmod{p}} F(g_1^{(r+n/p)p}\Gamma) \;\;p\, \ud\mu_\T(r)\\
& \approx_\epsilon & \int_{r\in [0,1/p)} \int_{\ns} F \ud\mu_{\ns} \;\;p\, \ud\mu_\T(r) = \int_{\ns} F \ud\mu_{\ns}.
\end{eqnarray*}
Let us now prove the general case.

\begin{proof}[Proof of Proposition \ref{prop:circ-to-Y1}]
Fix a basis on $\nss=(H/\Gamma,H_{\bullet(1)})$ and let $\delta>0$. Our task is to produce a $\delta$-balanced map $\ns_1\to \nss$. Let $g:\Z\to H$ be the polynomial map given by Proposition \ref{prop:per-bal}, such that the induced map $\phi_g:\Zmod{p}\to H/\Gamma$, $n\mapsto g(n)\Gamma$ is $\delta$-balanced. From periodicity we have that $g(n)=\gamma^{n/p}$ for some $\gamma\in \Gamma$. Let $\Lf:\Z^D\to\Z^t$ be a system of integer linear forms of size at most $1/\delta$. Then $g^t(\Lf(n))\Gamma^\Lf$ is $\delta$-equidistributed in $\nss^\Lf$. Let $\phi:\T\to H/\Gamma$ be the circle flow interpolating the map $\phi_g$, that is $\phi:\T \to H/\Gamma$, $\theta\mapsto \gamma^{\theta}\Gamma$. Our main claim is that
\[
\Big|\int_{\T^D} F^{\otimes^t}\circ\phi^t(\Lf(x)) \ud\mu_{\T^D}(x) - \int_{\nss^\Lf}F^{\otimes^t}\Big|\leq \delta.
\]
Using the notation $\gamma^v=\begin{psmallmatrix}\gamma^{v(1)}\\[-0.4em]\vdots \\[0.1em] \gamma^{v(t)}\end{psmallmatrix}\in G^t$ for $v\in \R^t$ (see Definition \ref{def:LeibGp}), the integral on the left side here is written $\int_{\T^D} F^{\otimes^t}\big(\gamma^{\Lf(x)}\Gamma^t\big) \ud\mu_{\T^D}(x)$. By the quotient integral formula, this equals
\[
\int_{[0,1/p)^D}\left(\E_{n\in \Zmod{p}^D} F^{\otimes^t}\big(\gamma^{\Lf(r)+\Lf(\frac{n}{p})}\Gamma^t\big)\right)\;p^D\cdot \mu_{\T^D}(r)
\]
For each $r\in [0,1/p)^D$, the $\Zmod{p}^D$-orbit $(\gamma^{\Lf(r)}\cdot \gamma^{\Lf(\frac{n}{p})})\Gamma^t)$ is still $\delta$-equidistributed in $G^\Lf/\Gamma^\Lf$, since the equidistribution property is not affected by multiplying by the constant $\gamma^{\Lf(r)}\in G^\Lf$. Thus for each such $r$ we have
\[
\Big| \E_{n\in \Zmod{p}^D} F^{\otimes^t}\big(\gamma^{\Lf(r)}\cdot \gamma^{\Lf(\frac{n}{p})}\Gamma^t\big) - \int_{\nss^\Lf} F^{\otimes^t} \Big|\leq \delta,
\]
and the result follows.
\end{proof}

\noindent Let us end this section by using Proposition \ref{prop:per-bal} to establish the easy case of Theorem \ref{thm:comp2trans}, as follows.

\begin{proposition}\label{prop:easycase}
For every $\epsilon >0$ there exists $C>0$ such that the following holds. For every measurable function $f : \ns_2 \to [0,1]$ and every prime $p\geq C$, there is a function $f' : \Zmod{p}\to [0,1]$ such that, for every system $\Lf$ of linear forms of size at most $1/\epsilon$ and complexity at most 2, we have $\abs{ S_{\Lf}(f : \ns_2) - S_\Lf(f' : \Zmod{p}) } \leq \epsilon$.
\end{proposition}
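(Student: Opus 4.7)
The plan is to reduce Proposition \ref{prop:easycase} to the discrete transfer result Proposition \ref{prop:per-bal} applied directly to $\ns_2$. The strategy has two stages: first approximate the measurable $f : \ns_2 \to [0,1]$ by a smooth $[0,1]$-valued function $F$ that is close to $f$ in $L^1$; then pull $F$ back through a sufficiently balanced map $\phi_g : \Zmod{p} \to \ns_2$ supplied by Proposition \ref{prop:per-bal}, setting $f' := F \circ \phi_g$.

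For the smoothing stage, I would convolve $f$ with a smooth nonnegative bump $\rho$ on $\T^2$ of total mass $1$. The output $F := f * \rho$ then takes values in $[0,1]$, is smooth, and satisfies $\norm{F}_{\Lip} \leq 1 + \norm{\nabla \rho}_{L^1} =: L$. By standard approximation of the identity, $\rho$ can be chosen so that $\norm{f - F}_{L^1(\T^2)} \leq \eta$ for any prescribed $\eta > 0$, with $L$ depending only on $\eta$. The $L^1$-continuity of $\Sol_\Lf(\cdot : \ns_2)$ noted in Remark \ref{rem:L1cont} then gives, for any system $\Lf$ of $t$ forms,
\[
|\Sol_\Lf(f : \ns_2) - \Sol_\Lf(F : \ns_2)| \leq t\eta.
\]

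For the pullback stage, apply Proposition \ref{prop:per-bal} to $\ns_2$ with the standard basis on $\R^2$ and with a parameter $\delta > 0$ to be fixed. This produces, for every prime $p$ above some threshold depending on $\delta$, a $(p\Z, \Z^2)$-consistent polynomial $g \in \poly_0(\Z, \R_{\bullet(1)} \times \R_{\bullet(2)})$ whose induced map $\phi_g : \Zmod{p} \to \ns_2$ is $\delta$-balanced. Setting $f' := F \circ \phi_g$ gives a $[0,1]$-valued function on $\Zmod{p}$, and the Lipschitz bound \eqref{eq:ProdLipBnd} yields
\[
|\Sol_\Lf(f' : \Zmod{p}) - \Sol_\Lf(F : \ns_2)| \leq \delta\, t\, L^t
\]
for every system $\Lf$ of size at most $1/\delta$.

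To conclude, given $\epsilon > 0$ I observe that systems of size at most $1/\epsilon$ involve at most $t \leq 1/\epsilon$ forms. Fix $\eta := \epsilon^2/2$ so that $t\eta \leq \epsilon/2$, which in turn determines $L = L(\epsilon)$; then pick $\delta \leq \epsilon$ small enough that $\delta\, t\, L^t \leq \epsilon/2$ for all admissible $t$; and take $p$ above the threshold $C(\delta)$ from Proposition \ref{prop:per-bal}. The triangle inequality then delivers the claimed bound. There is no real obstacle in this direction of Theorem \ref{thm:comp2trans}: the hard transfer is the main case $\Zmod{p} \to \ns_2$, not this one. In particular, the complexity-$2$ hypothesis plays no explicit role here, since Proposition \ref{prop:per-bal} supplies the balanced map for arbitrary complexity; the only technical care needed is that the convolution approximation $F$ stays $[0,1]$-valued with a controllable Lipschitz norm, which the smooth-bump convolution handles cleanly.
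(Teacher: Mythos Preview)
Your overall strategy---approximate $f$ by a $[0,1]$-valued Lipschitz function with a controlled Lipschitz constant, then pull back through a balanced map from Proposition \ref{prop:per-bal}---matches the paper's, but your smoothing step has a genuine gap. You assert that the bump $\rho$ can be chosen so that $\|f - f*\rho\|_{L^1(\T^2)} \leq \eta$ with $L = 1 + \|\nabla\rho\|_{L^1}$ depending only on $\eta$. This is false as a uniform statement in $f$: for a fixed bump $\rho$ with support of scale $r$, take $f(x_1,x_2) = 1_{[0,1/2)}(\{n x_1\})$ with $n \gg 1/r$; then $f*\rho$ is close to the constant $1/2$, so $\|f - f*\rho\|_{L^1}$ is bounded away from $0$. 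In other words, convolution with a fixed mollifier does not give an $L^1$ approximation that is uniform over all $[0,1]$-valued measurable functions. Since the proposition demands that the threshold $C$ be chosen before $f$ is revealed, your Lipschitz bound $L$ (and hence the $\delta$ fed into Proposition \ref{prop:per-bal}, and hence $C$) cannot be allowed to depend on $f$; but with your argument it must.

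The paper circumvents this by \emph{not} trying to approximate $f$ itself in $L^1$ by a function of uniformly bounded Lipschitz norm. Instead it observes that there are only finitely many systems $\Lf$ of size at most $1/\epsilon$, so the vector $\big(\Sol_\Lf(f:\ns_2)\big)_{\Lf}$ lives in a fixed finite-dimensional cube. A compactness argument then produces, for each $\epsilon$, a finite net of continuous (hence, after Stone--Weierstrass, Lipschitz) functions whose $\Lf$-average vectors are $\epsilon/4$-dense among all achievable such vectors; the maximum of their Lipschitz norms gives the required uniform $L = L(\epsilon)$. Given any measurable $f$, Lusin plus $L^1$-continuity of $\Sol_\Lf$ lands you near one of these net functions at the level of averages, which is all that is needed. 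Your second stage is then correct verbatim. So the fix is not to your pullback step but to the approximation step: replace the convolution argument by this compactness-in-average-space argument.
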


\begin{proof}
We first claim that $f$ can be assumed to be continuous with Lipschitz norm depending only on $\epsilon$, more precisely there exists $f_0$ with $\|f_0\|_{\Lip(\ns_2)}\leq C'(\epsilon)$ such that $|\Sol_\Lf(f:\ns_2)-\Sol_\Lf(f_0:\ns_2)|\leq \epsilon/2$ for every $\Lf$ of size at most $1/\epsilon$. We prove this with the following compactness argument.\\
\indent Let $\cL(\epsilon)$ denote the finite set  of all systems of forms $\Lf$ of size at most $1/\epsilon$. Let us say that a point $w\in [0,1]^{\cL(\epsilon)}$ is \emph{achieved} if there is a continuous function $f_w:\ns_2\to [0,1]$ such that $\Sol_{\Lf}(f:\ns_2)= w(\Lf)$ for every $\Lf\in \cL(\epsilon)$. Let $W$ denote the closure of the set of achieved points in $[0,1]^{\cL(\epsilon)}$ with respect to the $\ell^\infty$ norm. By compactness of $W$ there exists a finite $(\epsilon^2/4)$-net $W_\epsilon$ of achieved points. Each continuous function $f_w$ for $w\in W_\epsilon$ can be assumed to have finite Lipschitz norm, since the Lipschitz functions on $\ns_2$ are dense, relative to the supremum norm, in the set of continuous functions (by the Stone-Weierstrass theorem). Let $C'=\max\{\|f_w\|_{\Lip(\ns_2)}:w\in W_\epsilon\}$. Now, given the measurable function $f$, first by Lusin's theorem there is a continuous function $f_c:\ns_2\to [0,1]$ such that $\|f-f_c\|_{L^1(\ns_2)}\leq \epsilon^2/4$, and we therefore have $|\Sol_\Lf(f:\ns_2)-\Sol_\Lf(f_c:\ns_2)|\leq \epsilon/4$ for all $\Lf\in \cL(\epsilon)$ (using the $L^1$-continuity described in Remark \ref{rem:L1cont}). Then for some $w\in W_\epsilon$ we have $\|f_c-f_w\|_\infty\leq\epsilon^2/4$, and it follows that $|\Sol_\Lf(f:\ns_2)-\Sol_\Lf(f_w:\ns_2)|\leq \epsilon/2$ for all $\Lf\in \cL(\epsilon)$. Relabelling $f_w$ as $f_0$, our claim follows.\\
\indent Now, given that $\|f_0\|_{\Lip(\ns_2)}\leq C'(\epsilon)$, we may apply Proposition \ref{prop:per-bal} with $\ns_2$ and $\delta$ sufficiently small depending only on $\epsilon$, so that the resulting map $\phi_g:\Zmod{p}\to \ns_2$ satisfies $|\Sol_\Lf(f_0:\ns_2)-\Sol_\Lf(f_0\circ \phi_g:\Zmod{p})|\leq \epsilon/2$ for all $\Lf\in \cL(\epsilon)$. We can then take $f'= f_0\circ\phi_g$. 
\end{proof}

\section{Balanced maps between tori of degree 2}\label{sec:QT-maps}

\noindent For each positive integer $d$ let us denote by $\nss(d)$ the $d$-dimensional torus with maximal degree 2 filtration, $\nss(d)=(\T^d, \R^d_{\bullet(2)})$. The main result of this section is the following.
\begin{proposition}\label{prop:2circ-to-2torus}
For every positive integer $d$ we have that $\nss(1)$ models $\nss(d)$.
\end{proposition}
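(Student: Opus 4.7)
The plan is a direct explicit construction. Fix $\delta > 0$ and let $r_0 = r_0(\delta) \in \N$ be an upper bound on $\dim \nss(1)^\Lf$ over all systems $\Lf$ of size at most $1/\delta$; such a bound is finite by counting Leibman generators in terms of the number and sizes of the forms in the system. Choose an integer $N = N(\delta, d)$ with $N \geq C_d\, r_0/\delta$ for a suitable constant $C_d$ depending only on $d$, and define $g : \R \to \R^d$ by $g(x) = Bx$, where $B = (1, N, N^2, \ldots, N^{d-1})^{\top} \in \Z^d$. Being linear, $g$ is polynomial of degree $\leq 1 \leq 2$, hence $g \in \poly_0(\R_{\bullet(2)}, \R^d_{\bullet(2)})$; and since $g(x+1) - g(x) = B \in \Z^d$, $g$ is $(\Z, \Z^d)$-consistent, so the induced map $\phi_g : \T \to \T^d$ is a well-defined continuous polynomial map. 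By Lemma \ref{lem:ctspolylin}, any such continuous polynomial map must be linear, so this form is essentially forced; the flexibility comes from the choice of the coefficient vector $B$, which we take to have rapidly increasing coordinates.

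The structural key is that since the filtered group $(\R^d, \R^d_{\bullet(2)})$ is just the $d$-fold direct product of $(\R, \R_{\bullet(2)})$ with itself, the Leibman nilmanifold construction factorizes: under the coordinate reindexing $(\R^d)^t \cong (\R^t)^d$, one obtains a natural isomorphism of filtered nilmanifolds $\nss(d)^\Lf \cong (\nss(1)^\Lf)^d$. Parameterizing $\nss(1)^\Lf$ as the image of a covering $\T^r \to \nss(1)^\Lf$ by its Leibman generators, the map $\phi_g^t$ becomes, under this identification, the map $(y_1, \ldots, y_r) \mapsto (By_1, \ldots, By_r)$ from $\T^r$ to $(\T^d)^r$. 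The Mal'cev metric on $\nss(d)^\Lf$ inherited from the standard basis respects this product decomposition up to multiplicative constants depending only on $d$.

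The problem therefore reduces to the $1$-dimensional equidistribution estimate: for every $F : \T^d \to \C$ with $\|F\|_{\Lip} \leq 1$,
\[
\left| \int_{\T} F(B\theta) \ud\theta - \int_{\T^d} F \ud\mu_{\T^d} \right| = O_d(1/N).
\]
This is proved by an iterated Riemann-sum argument. Partitioning $\T$ into the intervals $I_k = [k/N, (k+1)/N)$ and substituting $\theta = k/N + s/N$ on each, one finds $N^j \theta \equiv N^{j-1} s \pmod{1}$ for every $j \geq 1$; exploiting the Lipschitz property in the first coordinate and summing over $k$ then reduces the $d$-dimensional average, up to an error $O(1/N)\|F\|_{\Lip}$, to a $(d-1)$-dimensional integral of the same form, and induction on $d$ finishes the bound. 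Telescoping across factors extends this to the $r$-fold product map with total error $O_d(r/N)\|F\|_{\Lip}$, so the choice $N \geq C_d\, r_0(\delta)/\delta$ delivers the required $\delta$-balance.

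The main technical obstacle I expect is justifying rigorously that the Mal'cev metric on $\nss(d)^\Lf$ behaves, up to constants depending only on $d$, like the flat product metric coming from the identification with $(\nss(1)^\Lf)^d$, so that the Lipschitz hypothesis on the test function transfers cleanly through the Riemann-sum argument. A comparison in the spirit of \cite[Lemma A.17]{GTOrb} should suffice, but some care is needed to keep all constants uniform in the system $\Lf$.
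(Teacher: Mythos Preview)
Your construction is correct and uses exactly the same map as the paper: $\phi_g(x)=(x,Nx,\ldots,N^{d-1}x)$, together with the same structural identification $\nss(d)^\Lf\cong(\nss(1)^\Lf)^d$. The difference lies in the analytic step. The paper approximates the Lipschitz test function by a truncated Fourier expansion (Lemma~\ref{fourier-lip}) and then argues, character by character, that for $k$ large enough every non-trivial character on $\nss(d)^\Lf$ pulls back to a non-trivial character on $\nss(1)^\Lf$ (Lemma~\ref{lem:nontrivchar}), so its integral vanishes exactly. You instead run an iterated Riemann-sum argument directly on the Lipschitz function, obtaining an explicit quantitative error $O_d(r/N)$ with no appeal to Fourier analysis. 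Your route is more elementary and immediately effective; the paper's route is shorter to write once the Fourier-truncation lemma is available and sidesteps the metric-comparison issue you flag, since characters extend trivially to the ambient torus.

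Two small corrections. First, the constants in the metric comparison between $\nss(d)^\Lf$ (with the restricted metric from $(\T^d)^t$) and $(\T^d)^r$ via your generator parametrisation will depend on the sizes of the Leibman generators $v_j$, hence on $1/\delta$, not only on $d$; this is harmless since $N$ is already chosen in terms of $\delta$. Second, your bound $r_0$ is likewise $\delta$-dependent, so the final choice of $N$ should simply be stated as depending on $\delta$ and $d$ jointly, which is exactly what the definition of modelling allows.
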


\noindent For each positive integer $k$, we define the following continuous homomorphism:
\[
\phi_k: \T \to \T^d,\;\; x \mapsto (x,k x,k^2 x,\ldots, k^{d-1} x).
\]
We shall prove the proposition by showing that for every $\delta>0$, for $k$ sufficiently large the map $\phi_k$ is a $\delta$-balanced map $\nss(1) \to \nss(d)$. As we shall eventually see in Lemma \ref{lem:nontrivchar} below, the main property enabling this is that, for every $j\in [d-1]$, for every $r_0,r_1,\dots,r_{j-1}\in \Z$ and $r_j\in\Z\setminus\{0\}$, we have $r_0+r_1 k+\cdots+r_j k^j\neq 0$ for $k$ sufficiently large (there are of course other choices of $\phi_k$ with this property).\\
\indent Given a system of forms $\Lf:\Z^D\to \Z^t$ of size at most $1/\delta$, viewing $\Lf$ as a matrix (cf. Definition \ref{def:LeibGp}), for each $i\in [D]$ let $u_i$ denote the $i$-th column $\Lf(e_i)\in \Z^t$ of  $\Lf$ (where $e_1,\dots,e_D$ is the standard basis of $\R^D$). Recall from Definition \ref{def:LeibGp} that $\Lf^{[2]}$ is the subgroup of $\Z^t$ generated by the collection of vectors consisting of the $u_i$, the products $u_i u_j$, and the vector binomial coefficients $\binom{u_i}{2}$. Let $v_1,\ldots,v_m\in \Z^t$ be a set of generators for $\Lf^{[2]}$. We then have $\Lf^{[2]} =  \Z\,v_1+ \cdots + \Z\,v_m$ and we can express the Leibman nilmanifolds for $\Lf$ on $\nss(1),\nss(d)$ as follows:
\begin{eqnarray*}
\nss(1)^\Lf & = & \T\,v_1+ \cdots + \T\,v_m \leq \T^t,\\
\nss(d)^\Lf & = & \T^d\,v_1+ \cdots + \T^d\,v_m \leq (\T^d)^t,
\end{eqnarray*}
where $\T^d\,v=\Big\{ \begin{psmallmatrix}\theta v(1)\\[-0.4em]\vdots \\[0.1em] \theta v(t)\end{psmallmatrix}:\theta \in \T^d\Big\}$. 

Now $\phi_k^t$ maps $\nss(1)^\Lf$ into $\nss(d)^\Lf$ (as can be checked directly or by Proposition \ref{prop:polykeepLeib}), and we want to show that this map is $\delta$-equidistributed if $k$ is large enough.

By Definition \ref{def:equid}, we have to show that for any function $F:\nss(d)^\Lf\to \C$ with $\|F\|_{\Lip(\T^{dt})}\leq 1$, we have
$\left|\int_{\nss(d)^\Lf} F\; \ud\mu_{\nss(d)^\Lf} - \int_{\nss(1)^\Lf} F\circ \phi_k^t\;\ud\mu_{\nss(1)^\Lf}\right|\leq \delta$. 
We use the following result on Fourier approximations of Lipschitz functions \cite[Lemma A.9]{GTMob}.
\begin{lemma}\label{fourier-lip} 
Let $\T^r$ be the standard $r$-dimensional torus, with metric induced by the $\ell^\infty$ norm $\| (x_1,\ldots,x_r) \|_{\T^r} := \sup_{1 \leq j \leq r} \| x_j \|_\T$. Let $X$ be a subset of $\T^r$, and let $f: X \to \C$ be a Lipschitz function.  Then for every positive integer $N$ there exist $J = O_{r}(N^r)$, $c_1,\ldots,c_J = O(\|f\|_{\infty})$, and $m_1,\ldots,m_J \in \Z^r$ such that for all $x \in X$ we have
\[
 f(x) = \sum_{j=1}^J c_j\; e(m_j \cdot x) + O_r\Big(\frac{\| f \|_{\Lip(\T^r)} \log N}{N}\Big).
 \]
Furthermore, the values of $m_1,\ldots,m_J$ depend on $r$, $N$ but are otherwise independent of $f$ or $X$.
\end{lemma}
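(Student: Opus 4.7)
The plan is to reduce the lemma to a uniform Fourier approximation of a Lipschitz function on all of $\T^r$, and then carry this out via convolution with a tensor product of Fej\'er kernels.

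For the reduction, I would extend $f:X\to\C$ to a function $\tilde{f}:\T^r\to\C$ with comparable norms. Writing $f=\tRe f+i\,\tIm f$, one applies McShane's extension theorem (with respect to the $\ell^\infty$ metric on $\T^r$) to each real component to obtain real-valued Lipschitz extensions, recombines them into a complex extension, and then truncates the modulus by replacing $\tilde{f}(x)$ with $\tilde{f}(x)\min(1,\|f\|_\infty/|\tilde{f}(x)|)$. This yields $\|\tilde{f}\|_\infty\leq\|f\|_\infty$ and $\|\tilde{f}\|_{\Lip(\T^r)}=O(\|f\|_{\Lip})$, so it suffices to approximate $\tilde{f}$ uniformly on $\T^r$ by a trigonometric polynomial of the stated form.

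For the approximation, I would take $P_N=\tilde{f}\ast F_N^{(r)}$, where $F_N^{(r)}(y)=\prod_{k=1}^r F_N(y_k)$ is the tensor product of one-dimensional Fej\'er kernels of degree $N$. Then
\[
P_N(x)=\sum_{m\in\Z^r,\;|m|_\infty<N}\Big(\prod_{k=1}^r (1-|m_k|/N)\Big)\,\hat{\tilde{f}}(m)\,e(m\cdot x),
\]
so the list of frequencies is exactly the set of lattice points $\{m\in\Z^r:|m|_\infty<N\}$, of cardinality $J=O_r(N^r)$, depending only on $r$ and $N$ as required. Each coefficient $c_j$ is a damped Fourier coefficient of $\tilde{f}$ and so satisfies $|c_j|\leq |\hat{\tilde{f}}(m_j)|\leq\|\tilde{f}\|_\infty\leq\|f\|_\infty$. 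For the error, the standard estimate
\[
\|\tilde{f}-\tilde{f}\ast F_N^{(r)}\|_\infty\leq \|\tilde{f}\|_{\Lip(\T^r)}\int_{\T^r}\|y\|_{\T^r}\,F_N^{(r)}(y)\,\ud y,
\]
combined with the one-dimensional bound $\int_\T\|y\|_\T\,F_N(y)\,\ud y=O(\log N/N)$ (a routine calculation splitting the Fej\'er mass near and far from the origin) and Fubini, gives the claimed $O_r(\|f\|_{\Lip}\log N/N)$.

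The only mildly subtle point is arranging the complex-valued extension with truncation so that $\|\tilde{f}\|_\infty\leq\|f\|_\infty$ and $\|\tilde{f}\|_{\Lip(\T^r)}=O(\|f\|_{\Lip})$ hold simultaneously; everything else is classical Fourier analysis. The universality of the list $m_1,\dots,m_J$ is automatic since the kernel $F_N^{(r)}$ depends only on $r$ and $N$, not on $f$ or $X$.
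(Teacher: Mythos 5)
Your proof is correct. The paper does not prove this lemma itself --- it imports it verbatim from \cite[Lemma A.9]{GTMob} --- and your argument (Lipschitz extension from $X$ to all of $\T^r$ via McShane plus a norm-preserving radial truncation, followed by convolution with a tensor product of Fej\'er kernels and the standard $\int_\T \|y\|_\T F_N(y)\ud y = O(\log N/N)$ estimate) is essentially the standard proof underlying the cited result, with all the required features (frequency set $\{m:|m|_\infty<N\}$ depending only on $r$ and $N$, coefficients bounded by $\|f\|_\infty$, error $O_r(\|f\|_{\Lip}\log N/N)$) coming out correctly.
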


\noindent We apply this in our situation, with $X=\nss(d)^\Lf$, $r=td$, $f=F$. Choosing $N$ sufficiently large, denoting the obtained characters  $x\mapsto e(m_j\cdot x)$ by $e_{m_j}$, and letting $F_0=\sum_{j=1}^J c_j\, e_{m_j}$,  we have by the lemma that $\|F - F_0\|_\infty\leq \delta/4$. It therefore suffices to show that
\begin{equation}\label{eq:FourierSumEquid}
\left|\;\int_{\nss(d)^\Lf} F_0\; \ud\mu_{\nss(d)^\Lf}\;-\;\int_{\nss(1)^\Lf} F_0\circ \phi_k^t\; \ud\mu_{\nss(1)^\Lf}\;\right| \leq \delta/2.
\end{equation}
We shall in fact prove this with upper bound equal to 0, by showing that for sufficiently large $k$ we have for every $j\in [J]$ that
\begin{equation}\label{eq:chareq}
\int_{\nss(d)^\Lf} e_{m_j}(y) \ud\mu_{\nss(d)^\Lf}(y) = \int_{\nss(1)^\Lf} e_{m_j}(x,kx,\ldots,k^{d-1}x) \ud\mu_{\nss(1)^\Lf}(x). 
\end{equation}
Since \eqref{eq:chareq} clearly holds when $e_{m_j}$ restricted to $\nss(d)^\Lf$ is the principal character, and otherwise we have $\int_{\nss(d)^\Lf} e_{m_j}(y) \ud\mu_{\nss(d)^\Lf}(y)=0$, it will suffice  to prove the following result.

\begin{lemma}\label{lem:nontrivchar}
Let $\chi$ be a character on $(\T^d)^t$ such that the restriction $\chi|_{\nss(d)^\Lf}$ is not the principal character on $\nss(d)^\Lf$. Then  for every sufficiently large positive integer $k$, we have
\[
\int_{\nss(1)^\Lf} \chi(x,kx,\ldots,k^{d-1}x) \ud\mu_{\nss(1)^\Lf}(x) =0.
\]
\end{lemma}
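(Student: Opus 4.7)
The plan is to identify the character $\chi$ with an integer matrix and reduce the claim to a non-vanishing statement about a polynomial in $k$. A character $\chi$ on $(\T^d)^t$ is given by some $m=(m_1,\dots,m_t)$ with $m_i\in\Z^d$, acting as $\chi(\theta_1,\dots,\theta_t)=e\bigl(\sum_{i=1}^t m_i\cdot \theta_i\bigr)$. Viewing $m$ as a $d\times t$ integer matrix with columns $m_i$, write its rows as $r_0,r_1,\dots,r_{d-1}\in\Z^t$, where $r_s=(m_1(s),\dots,m_t(s))$.

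First I would characterise when $\chi|_{\nss(d)^\Lf}$ is principal. Since $\nss(d)^\Lf=\T^d v_1+\cdots+\T^d v_m$ and $\chi$ evaluated on $\theta v_j=(\theta v_j(1),\dots,\theta v_j(t))$ equals $e\bigl(\theta\cdot\sum_i v_j(i) m_i\bigr)$, this is the trivial character on $\T^d v_j$ iff the vector $\sum_i v_j(i) m_i\in\Z^d$ is zero, equivalently iff $r_s\cdot v_j=0$ for every $s\in\{0,\dots,d-1\}$. Thus $\chi|_{\nss(d)^\Lf}$ is principal iff every row $r_s$ lies in the annihilator $(\Lf^{[2]})^\perp\subset\Z^t$ of $\Lf^{[2]}$.

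Next I would compute the pulled-back character. For $y=(y_1,\dots,y_t)\in\T^t$ we have
\[
\chi\bigl(\phi_k(y_1),\dots,\phi_k(y_t)\bigr)=e\Bigl(\sum_{i=1}^t\sum_{s=0}^{d-1} m_i(s)\,k^s\,y_i\Bigr)=e\bigl(n(k)\cdot y\bigr),
\]
where $n(k):=\sum_{s=0}^{d-1} k^s\,r_s\in\Z^t$. So $\chi\circ\phi_k^t=e_{n(k)}$ restricted to $\nss(1)^\Lf$, and this restriction is principal iff $n(k)\in(\Lf^{[2]})^\perp$, i.e.\ iff $n(k)\cdot v_j=0$ for every generator $v_j$ of $\Lf^{[2]}$.

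Finally I would exploit the non-principal hypothesis polynomially. By the first step there is a largest index $s^{\ast}\in\{0,\dots,d-1\}$ for which $r_{s^{\ast}}\notin(\Lf^{[2]})^\perp$, and there is some generator $v_{j^{\ast}}$ with $r_{s^{\ast}}\cdot v_{j^{\ast}}\neq 0$. Since $r_s\cdot v_{j^{\ast}}=0$ for every $s>s^{\ast}$, the quantity
\[
n(k)\cdot v_{j^{\ast}}\;=\;\sum_{s=0}^{s^{\ast}} (r_s\cdot v_{j^{\ast}})\,k^s
\]
is a polynomial in $k$ of degree exactly $s^{\ast}$ with non-zero leading coefficient, hence vanishes only for finitely many integers $k$. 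For any $k$ outside this finite set, $n(k)\notin(\Lf^{[2]})^\perp$, so $e_{n(k)}$ is non-principal on the subtorus $\nss(1)^\Lf$, and the desired integral vanishes by orthogonality of characters. I do not expect a serious obstacle here: once the rank-one reduction and the identification of $n(k)$ are in place, the argument is a degree-count. The only mildly delicate point is ensuring $s^{\ast}$ is well defined, which follows precisely from the non-principality hypothesis on $\chi|_{\nss(d)^\Lf}$.
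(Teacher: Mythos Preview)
Your proof is correct and follows essentially the same approach as the paper: the paper decomposes $\chi$ into characters $\chi_1,\dots,\chi_d$ on $\T^t$ (your rows $r_0,\dots,r_{d-1}$), picks the largest index $j$ for which $\chi_j$ is non-principal on $\nss(1)^\Lf$ (your $s^\ast$), and then observes that the resulting frequency, a polynomial in $k$ with non-zero leading term, is non-zero for $k$ large. Your version is simply a more explicit rendering of the same idea in matrix and annihilator notation.
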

\noindent Indeed, applying this above for each $j\in [J]$, we deduce  \eqref{eq:FourierSumEquid} for some $k\in \N$, as desired.
\begin{proof}
Since $(\T^d)^t\cong \bigoplus_{i\in [d]} \T^t$, there are characters $\chi_1,\dots,\chi_d$ on $\T^t$ such that
\[
\chi(x,kx,\ldots,k^{d-1}x)=\chi_1(x)\chi_2(kx)\cdots \chi_d(k^{d-1}x).
\]
Moreover, since $\nss(d)^\Lf\cong \bigoplus_{i\in [d]}  \nss(1)^\Lf$, at least one $\chi_j$ restricts to a non-principal character on $\nss(1)^\Lf$. Let $j\in [d]$ be the greatest index such that this is the case. Then for $k$ large enough, the character on $\nss(1)^{\Lf}$ sending $x$ to $\chi(x)=\chi_1(x)\chi_2(kx)\cdots \chi_j(k^{j-1}x)$ is non-trivial, since its frequency is a non-zero vector. The result follows.
\end{proof}

\noindent With Propositions \ref{prop:2circ-to-2torus}, \ref{prop:circ-to-Y1}, and \ref{prop:ReducToTori}, the proof of Theorem \ref{thm:maincasereduc} is now  complete, and this implies Proposition \ref{prop:maincase} (by Proposition \ref{prop:mcr-implies-ms}). This together with Proposition \ref{prop:easycase} then gives us finally the transference result Theorem \ref{thm:comp2trans}, and Theorem \ref{thm:main} follows.

\section{On modelling nilmanifolds of higher degree -- a counterexample}\label{sec:h3}

\noindent The main result of this section concerns possible generalizations of Theorem \ref{thm:main} for systems of higher complexity. For each positive integer $s$ let $\ns_s$ denote the following filtered torus of degree $s$:
\[
\ns_s=(\T^s,\R_{\bullet(1)}\times\cdots\times \R_{\bullet(s)}).
\]
One may believe at first that the natural generalization of Theorem \ref{thm:main} for complexity $s>2$ should consist in replacing $\ns_2$ with $\ns_s$. This generalization would hold if it were true that $\ns_s$ models every filtered nilmanifold $\ns$ of finite complexity and degree at most $s$. However, this claim fails already for $s=3$, as we show in this section. More precisely, we prove that $\ns_3$ fails to model the nilmanifold of degree $3$ defined as follows.

Let $H$ be the Heisenberg group $\begin{psmallmatrix} 1 & \R & \R\\[0.1em]  & 1 & \R \\[0.1em]  &  & 1 \end{psmallmatrix}:=\Big\{ \begin{psmallmatrix} 1 & x_1 & x_3\\[0.1em]  & 1 & x_2 \\[0.1em]  &  & 1 \end{psmallmatrix}: x_i \in \R\Big\}$, let $\Gamma=\begin{psmallmatrix} 1 & \Z & \Z\\[0.1em]  & 1 & \Z \\[0.1em]  &  & 1 \end{psmallmatrix}$, and let $H_\bullet$ denote the degree-3 filtration on $H$ with $H_{(2)}=\begin{psmallmatrix} 1 & 0 & \R\\[0.1em]  & 1 & \R \\[0.1em]  &  & 1 \end{psmallmatrix}$ and $H_{(3)}=\begin{psmallmatrix} 1 & 0 & \R\\[0.1em]  & 1 & 0 \\[0.1em]  &  & 1 \end{psmallmatrix}$. The fact that this is indeed a filtration is checked directly; in particular since $H_{(2)}$ is abelian (isomorphic to $\R^2$), we have $[H_{(2)},H_{(2)}]\subset H_{(4)}=\{\id_H\}$. Let $\nh_3$ denote the filtered nilmanifold $(H/\Gamma,H_\bullet)$. Let $\tilde\pi$  denote the quotient homomorphism $H\to H/H_{(3)}\cong \R^2$, and let $\pi$ denote the induced projection $H/\Gamma \to \T^2$, defined by $\pi(x\Gamma)=\tilde\pi(x)+\Z^2$. Note that $\tilde\pi$ is a $(\Gamma,\Z^2)$-consistent map in $\poly_0\big( H_\bullet,\R_{\bullet(1)}\times\R_{\bullet(2)}\big)$.

The main result of this section is the following.

\begin{proposition}\label{prop:countercomp3}
The filtered torus $\ns_3$ does not model the filtered nilmanifold $\nh_3$.
\end{proposition}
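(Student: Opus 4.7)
The plan is to derive a rigid structural constraint on any continuous $(\Z^3,\Gamma)$-consistent polynomial map $g\in\poly_0(G_\bullet,H_\bullet)$ (where $G_\bullet$ denotes the filtration on $\R^3$ underlying $\ns_3$), and then to use the Heisenberg commutator to defeat $\delta$-balancedness for an appropriate system $\Lf$ and Lipschitz witness.

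I would first pick a Mal'cev basis $X_1,X_2,X_3$ of $\mathfrak{h}$ adapted to $H_\bullet$, so that $X_j$ has weight $j$ and $[X_1,X_2]=X_3$, and expand $g$ in second-kind Mal'cev coordinates, $g(s)=\exp(t_1(s)X_1)\exp(t_2(s)X_2)\exp(t_3(s)X_3)$. By the theory of polynomial maps into Mal'cev-coordinatised nilpotent groups, each $t_i:\R^3\to\R$ is a weighted polynomial of weighted degree $\leq i$ in $(s_1,s_2,s_3)$, where $s_j$ has weight $j$. Composition with $\tilde\pi:H\to H/H_{(3)}\cong\R^2$ gives, via $\tilde\pi(\Gamma)=\Z^2$ and the $(\Z^3,\Gamma)$-consistency of $g$, a continuous polynomial map $\ns_3\to\ns_2$. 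Applying Lemma~\ref{lem:ctspolylin} componentwise shows that each component of this descent has abelian degree $\leq 1$, so the corresponding lift differs from an affine polynomial by a $\Z$-valued polynomial, hence by a constant. Together with the weight constraints (no $s_2,s_3$ in $t_1$; no $s_3$ in $t_2$), this yields $t_1(s)=a_1s_1+a_0$ and $t_2(s)=b_1s_1+b_3s_2+b_0$ with $a_1,b_1,b_3\in\Z$. The crucial output is that $t_2$ has no $s_1^2$ (or any other quadratic) term, reflecting that the generator $X_2\in H_{(2)}$ is not itself a commutator and cannot be ``synthesised'' by an abelian source, unlike $X_3=[X_1,X_2]$.

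With this rigidity in hand, the Heisenberg commutator formula gives
\[
[\phi_g(s),\phi_g(s')]=\exp\!\bigl((t_1(s)t_2(s')-t_2(s)t_1(s'))\,X_3\bigr),
\]
and substituting the affine forms above, the $X_3$-coordinate of this bracket is
$a_1b_3(s_1s_2'-s_2s_1')+(a_1b_0-a_0b_1)(s_1-s_1')+a_0b_3(s_2'-s_2)$. Consequently, for every system $\Lf$ and all $y,y'\in\ns_3^\Lf$, the commutator tuple $[\phi_g^t(y),\phi_g^t(y')]\in (H_{(3)})^t$ lies in a sub-torus of the $X_3$-slice of $\nh_3^\Lf$ cut out by a single integer bilinear form whose leading coefficient is $a_1b_3\in\Z$. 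I would then choose $\Lf$ (for example $\Lf_4$, or a two-parameter system specifically designed to have rich commutator content, so that $\Lf^{[2]}$ contains enough cross-terms) for which the full commutator subgroup $[(\nh_3)^\Lf,(\nh_3)^\Lf]$ is strictly larger than any such integer-bilinear image. A non-principal Fourier character $\chi$ on the $X_3$-slice of $\nh_3^\Lf$ orthogonal to every integer-scaled bilinear image then gives a Lipschitz witness $F=\tRe\,\chi$ (lifted from the commutator projection) satisfying $\int_{\nh_3^\Lf}F=0$ but $|\int_{\ns_3^\Lf}F\circ\phi_g^t|\geq\delta_0>0$ uniformly over admissible $g$, contradicting $\delta_0$-balancedness.

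The hard part will be verifying that a suitable $\Lf$ exists for which a universal Fourier obstruction exists across all integer parameter tuples $(a_1,b_1,b_3)$ arising from admissible $g$. This reduces to a linear-algebraic and rationality analysis inside the Leibman group $(H,H_\bullet)^\Lf$: one must show that the space of $X_3$-valued bilinear forms on $\R^3\times\R^3$ with integer leading coefficient, pulled back to $(\ns_3^\Lf)^2$, fails to span the full commutator subgroup of $\nh_3^\Lf$. Carrying this out explicitly, using the Mal'cev-coordinate description of $\nh_3^\Lf$ coupled with the bound $a_1b_3\in\Z$, is where the non-abelian structure of $\nh_3$ is essential to the counterexample.
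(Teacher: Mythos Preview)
Your first step is correct and matches the paper's Lemmas~\ref{lem:beta}--\ref{lem:eps}: composing with $\tilde\pi$ and invoking Lemma~\ref{lem:ctspolylin} does force $t_1(s)=a_1s_1$ and $t_2(s)=b_1s_1+b_3s_2$ with $a_1,b_1,b_3\in\Z$ (the constants $a_0,b_0$ vanish by $g\in\poly_0$).

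The gap is in your second step. You try to find, for some fixed $\Lf$, a Fourier character on the $X_3$-slice of $\nh_3^\Lf$ that is orthogonal to the commutator image for \emph{every} integer value of $a_1b_3$. This cannot work. A direct computation shows that the commutator subgroup $[(H,H_\bullet)^\Lf,(H,H_\bullet)^\Lf]$ equals $\exp\big(\Span_\R(\Lf^{[1]}\!\cdot\!\Lf^{[2]})\,X_3\big)$, while your commutators $a_1b_3(y_1y_2'-y_2y_1')$, with $y_1,y_1'\in\Span_\R\Lf^{[1]}$ and $y_2,y_2'\in\Span_\R\Lf^{[2]}$, already span $a_1b_3\cdot\Span_\R(\Lf^{[1]}\!\cdot\!\Lf^{[2]})$ (set $y_2=y_1'=0$). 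Hence whenever $a_1b_3\neq 0$ the two spans coincide, and no non-principal character annihilates the image. The integrality of $a_1b_3$ does not help: multiplication by a nonzero integer is still surjective on the quotient torus.

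What you are missing is precisely the paper's Lemma~\ref{lem:calc}: one must analyse the $(\Z^3,\Gamma)$-consistency constraint on the \emph{third} coordinate $t_3$. Writing out $g(s)^{-1}g(s+k)$ in matrix form, the top-right entry is $t_3(s+k)-t_3(s)-a_1 s_1(b_1k_1+b_3k_2)$, and requiring this to be integral for all $s\in\R^3$ and $k\in\Z^3$ forces the cross-term $a_1b_3\,s_1k_2$ to vanish identically, i.e.\ $a_1b_3=0$. Once you have that, the contradiction is immediate: either $a_1=0$ (then $\pi\circ\phi_g$ is not even equidistributed) or $b_3=0$ (then $\pi\circ\phi_g$ factors through $\ns_1$ and fails balancedness for $\Lf_{U^2}$, as in Lemma~\ref{lem:eps}). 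Your commutator framework would also finish easily at this point, since $a_1b_3=0$ makes all the commutators trivial; but the essential content is the consistency analysis of $t_3$, not the commutator bookkeeping.
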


\noindent Before going into the details, let us convey the idea of the proof.\\
\indent Suppose for a contradiction that, for every fixed $\epsilon>0$, there existed an $\epsilon$-balanced map $\phi_g:\ns_3\to \nh_3$, induced by a $(\Z^3,\Gamma)$-consistent continuous map
\[
g \in \poly_0(\R_{\bullet(1)}\times \R_{\bullet(2)}\times \R_{\bullet(3)}, H_\bullet).
\]
The idea is that, if $\epsilon$ is sufficiently small, then the assumed equidistribution property would imply that $\phi_g$ yields a type of continuous `cross section' $\T^2\to H/\Gamma$ which cannot exist. We make this idea precise in two main steps. The first step consists in showing that composing $\phi_g$ with the projection $\pi$ induces a polynomial map $\beta:\ns_2\to\ns_2$ which, if $\epsilon$ is sufficiently small, must be a certain type of surjective homomorphism (this is the combination of lemmas \ref{lem:beta} and \ref{lem:eps}). This gives us precise information on the form that the polynomial $g$ itself must have. In the second step we use this information to show, essentially, that a restriction of such a map $\phi_g$ to a cross section of $\T^2$ in $\T^3$ already yields a contradiction, because the assumed form of $g$ in fact does not allow it to be a consistent polynomial (this is made precise in Lemma \ref{lem:calc}).

Let us now turn to the details. Let $\alpha$ denote the projection $\T^3\to\T^2$, $x\mapsto (x_1,x_2)$ and note that $\alpha$ is a polynomial map $\ns_3\to \ns_2$. We first show that the map $\pi\circ\phi_g$ factors through $\alpha$.

\begin{lemma}\label{lem:beta}
There is a polynomial map $\beta:\ns_2\to \ns_2$ such that $\pi\circ\phi_g = \beta\circ\alpha$.
\end{lemma}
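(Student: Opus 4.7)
The plan is to note that the composition $\psi := \tilde\pi\circ g\colon \R^3 \to \R^2$ is a polynomial map from a degree-$3$ filtered group to a degree-$2$ filtered group in which the third coordinate of the source carries degree $3$ while the target has no nontrivial elements of that degree; consequently $\psi$ will be forced to be constant in $x_3$ and hence to factor through the projection $\tilde\alpha\colon \R^3 \to \R^2$ onto the first two coordinates. Since $\tilde\alpha$ is nothing but a lift of $\alpha$, the resulting factor will descend to the desired map $\beta$.

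In more detail, I would first apply the composition property of polynomial maps recalled after Definition \ref{def:polymaps}: since $g\in \poly_0(\R_{\bullet(1)} \times \R_{\bullet(2)} \times \R_{\bullet(3)}, H_\bullet)$ and $\tilde\pi\in \poly_0(H_\bullet, \R_{\bullet(1)} \times \R_{\bullet(2)})$, the composition $\psi$ lies in $\poly_0(\R_{\bullet(1)} \times \R_{\bullet(2)} \times \R_{\bullet(3)},\R_{\bullet(1)} \times \R_{\bullet(2)})$ and is $(\Z^3,\Z^2)$-consistent. The degree-$3$ subgroup of the source filtration is $\{0\}\times \{0\}\times \R$, while the degree-$3$ subgroup of the target filtration is $\{0\}$. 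Hence for every $h = (0,0,t)$ the derivative $\partial_h\psi(x)=\psi(x+h)-\psi(x)$ is forced to vanish, and so $\psi(x_1,x_2,x_3)=\psi(x_1,x_2,0)$ for all $x\in \R^3$.

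I would then define $\tilde\beta\colon \R^2 \to \R^2$ by $\tilde\beta(x_1,x_2):=\psi(x_1,x_2,0)$, so that $\psi = \tilde\beta\circ \tilde\alpha$. The map $\tilde\beta$ is continuous, lies in $\poly_0(\R_{\bullet(1)}\times\R_{\bullet(2)},\R_{\bullet(1)}\times\R_{\bullet(2)})$ (since its iterated derivatives along directions in $\R^2$ coincide with those of $\psi$ along the lifts $(h,0)\in\R^3$), and is $(\Z^2,\Z^2)$-consistent (from the consistency of $\psi$ applied to the lifts $(\gamma,0)\in \Z^3$ of $\gamma\in\Z^2$). Therefore $\tilde\beta$ induces a polynomial map $\beta\colon \ns_2\to \ns_2$, and the identity $\psi = \tilde\beta\circ \tilde\alpha$ passes to the quotients to give the desired $\pi\circ\phi_g = \beta\circ\alpha$.

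The only step requiring real care is the derivative-vanishing argument; the factorization and all further bookkeeping (continuity, polynomiality, and consistency of $\tilde\beta$, then passage to the quotients) are routine consequences. In particular, no equidistribution hypothesis on $\phi_g$ is needed here: the claim is purely a rigidity statement forced by the mismatch between the filtration degrees of the source $\R_{\bullet(1)} \times \R_{\bullet(2)} \times \R_{\bullet(3)}$ and the target $\R_{\bullet(1)} \times \R_{\bullet(2)}$.
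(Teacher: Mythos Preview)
Your argument is correct and is in fact more direct than the paper's own proof. The paper fixes $a,b$ and studies the one-variable map $x\mapsto \tilde\pi(g(a,b,x))$; it then passes to the quotient $(\T,\R_{\bullet(3)})\to \ns_2$ and shows constancy by invoking Lemma~\ref{lem:ctspolylin} together with a dimension-counting argument on Leibman nilmanifolds for the cube systems $\Lf_{U^3}$. You bypass all of this by observing, straight from Definition~\ref{def:polymaps}, that a single derivative of $\psi=\tilde\pi\circ g$ along $h\in(\R^3)_{(3)}=\{0\}\times\{0\}\times\R$ must lie in the degree-$3$ part of the target filtration $\R_{\bullet(1)}\times\R_{\bullet(2)}$, which is trivial; hence $\psi$ is constant in $x_3$. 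Your subsequent bookkeeping (that $\tilde\beta:=\psi(\cdot,\cdot,0)$ is polynomial and $(\Z^2,\Z^2)$-consistent, so descends to $\beta:\ns_2\to\ns_2$) is routine and correct; one can also phrase it as composition with the filtered inclusion $(x_1,x_2)\mapsto(x_1,x_2,0)$. What the paper's route buys is a template (constancy via cube-preservation and dimension mismatch) that it reuses in the proof of Lemma~\ref{lem:eps}; what your route buys is a one-line proof of this particular lemma that needs neither Lemma~\ref{lem:ctspolylin} nor Proposition~\ref{prop:polykeepLeib}.
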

\noindent We use the following fact, which is a straightforward consequence of the definitions.
\begin{lemma}\label{lem:consisquot}
If $g\in \poly(G_\bullet,G_\bullet')$ is $(\Gamma,\Gamma')$-consistent  and $\Gamma,\Gamma'$ are normal subgroups of $G,G'$ respectively, then the induced map $\phi_g$ is polynomial from $G/\Gamma$ to $G'/\Gamma'$ with respect to the quotient filtrations on these groups.\footnote{The $i$-th group in the quotient filtration of $G_\bullet$ by $\Gamma$ is $(G_{(i)}\cdot \Gamma)/\Gamma$.}
\end{lemma}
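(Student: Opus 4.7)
\emph{Proof sketch.} The plan is to reduce the claim to a direct verification using the Leibman characterization of polynomial maps: a map $f:G\to G'$ between filtered groups lies in $\poly(G_\bullet,G'_\bullet)$ if and only if for every $k\geq 0$ and every choice of $h_j\in G_{(i_j)}$, $j\in[k]$, the iterated difference $\partial_{h_1}\cdots\partial_{h_k}f$ takes values in $G'_{(i_1+\cdots+i_k)}$, where $\partial_h f(x):=f(xh)\,f(x)^{-1}$.

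Let $\bar G_\bullet$ denote the quotient filtration of $G_\bullet$ by $\Gamma$, with $i$-th term $\bar G_{(i)}=(G_{(i)}\cdot\Gamma)/\Gamma$, and similarly let $\bar G'_\bullet$ denote the quotient filtration of $G'_\bullet$ by $\Gamma'$. Well-definedness of $\phi_g$ has already been noted in the text and is exactly the content of the $(\Gamma,\Gamma')$-consistency hypothesis. Given elements $\bar h_j\in\bar G_{(i_j)}$ and $\bar x\in G/\Gamma$, lift them to $h_j\in G_{(i_j)}$ and $x\in G$ respectively (this is possible directly from the definition of $\bar G_{(i_j)}$).

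Since $\Gamma'$ is normal in $G'$, the quotient map $q:G'\to G'/\Gamma'$ is a group homomorphism, and so the single difference operator descends: $\partial_{\bar h}\phi_g(\bar x)=q\big(g(xh)\big)\,q\big(g(x)\big)^{-1}=q\big(\partial_h g(x)\big)$. Iterating and using that each intermediate derivative of $\phi_g$ is again a well-defined function on the quotient (by the same consistency argument applied inductively), one gets
\[
\partial_{\bar h_1}\cdots\partial_{\bar h_k}\phi_g(\bar x)\;=\;q\big(\partial_{h_1}\cdots\partial_{h_k}g(x)\big).
\]
By the assumption $g\in\poly(G_\bullet,G'_\bullet)$, the right-hand side lies in $q\big(G'_{(i_1+\cdots+i_k)}\big)=\bar G'_{(i_1+\cdots+i_k)}$. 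This is exactly the Leibman condition characterizing $\phi_g\in\poly(\bar G_\bullet,\bar G'_\bullet)$.

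The only minor subtlety is verifying that the left-hand side genuinely depends only on the cosets $\bar h_j,\bar x$ and not on the lifts chosen; this is a routine induction on $k$ using the base case (well-definedness of $\phi_g$) together with the fact that $G_{(i_j)}\cap\Gamma$ lifts into $\Gamma$, so altering a lift $h_j$ by an element of $\Gamma\cap G_{(i_j)}$ changes the inner expression by something in $\Gamma'$, which $q$ kills. I do not anticipate any significant obstacle here; the whole proof is essentially a definition-chase made possible by normality of $\Gamma,\Gamma'$.
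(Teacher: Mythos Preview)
Your proof is correct and is precisely the ``straightforward consequence of the definitions'' that the paper alludes to without spelling out; the paper gives no proof of this lemma beyond that phrase, so your argument is the natural explicit verification. One cosmetic point: the paper's convention for the difference operator is $\partial_h g(x)=g(x)^{-1}g(xh)$ (Definition~\ref{def:polymaps}), not $g(xh)g(x)^{-1}$ as you wrote, but the argument goes through verbatim with either convention since $q$ is a homomorphism and $\Gamma'$ is normal.
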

\begin{proof}[Proof of Lemma \ref{lem:beta}]
For any two fixed points $a,b\in\R$, let
\[
f_{a,b}\colon \R\to \R^2,\quad x\mapsto \tilde\pi(g(a,b,x)).
\]
Since $x\mapsto g(a,b,x)$ and $\tilde \pi$ are polynomial, we have $f_{a,b}\in \poly(\R_{\bullet(3)},\R_{\bullet(1)}\times \R_{\bullet(2)})$. From the $(\Z^3,\Gamma)$-consistency of $g$, we deduce that $f_{a,b}$ is $(\Z,\Z^2)$-consistent. By Lemma \ref{lem:consisquot}, the induced map $\phi=\phi_{f_{a,b}}:\T\to \T^2$ is itself a polynomial map from $(\T,\R_{\bullet(3)})$ to $\ns_2$. We claim that this map is constant. To prove this we show that the compositions of $\phi$ with each of the two coordinate projections on $\ns_2$ are constant.\\
\indent Indeed, consider first the composition of $\phi$ with the projection to the first coordinate $\ns_2\to \ns_1$. This is a continuous polynomial map $\phi_1: (\T,\R_{\bullet(3)})\to \ns_1$, which implies that $\phi_1$ is a polynomial map $\T\to\T$ of degree $\leq 1$, by Lemma \ref{lem:ctspolylin}. Moreover, as a polynomial map $\phi_1$ must also conserve $3$-dimensional cubes; this follows from Proposition \ref{prop:polykeepLeib} applied to the system of forms $\Lf_{U^3}$ corresponding to 3-cubes (recall from \eqref{eq:U3} that $\Lf_{U^3}(n_1,n_2,n_3,n_4)=(n_1+v\cdot (n_2,n_3,n_4))_{v\in \{0,1\}^3}$). Now computing the Leibman group on $(\T,\R_{\bullet(3)})$ for this system, we find that $(\T,\R_{\bullet(3)})^{\Lf_{U^3}}\cong \T^{\{0,1\}^3}$. On the other hand, we have that $\phi_1\circ c$ is a 3-cube over $\ns_1$ (i.e. lies in $(\T,\R_{\bullet(1)})^{\Lf_{U^3}}$) if and only if it is a function of $v\in \{0,1\}^3$ of the form $\phi_1\circ c (v)=x+v\cdot h$, for some $x\in \T,h=(h_1,h_2,h_3)\in \T^3$. If $\phi_1$ were non-constant on $\T$, then the function $\T^{\{0,1\}^3}\to \T^{\{0,1\}^3}$, $c\mapsto  \phi_1\circ c$ would have to map the 8-dimensional group $(\T,\R_{\bullet(3)})^{\Lf_{U^3}}$ to the 4-dimensional group $\ns_1^{\Lf_{U^3}}$, which is impossible. Hence $\phi_1$ must be constant.\\
\indent By a similar argument, the composition of $\phi$ with projection to the second coordinate on $\ns_2$ is shown to be constant, using the fact that the torus  $(\T,\R_{\bullet(2)})^{\Lf_{U^3}}$ is 7-dimensional.\\
\indent Now we define $\beta:\ns_2\to \ns_2$, $(a+\Z,b+\Z)\mapsto \phi_{\tilde\pi\circ g}(a+\Z,b+\Z,x+\Z)$, for a fixed $x+\Z\in \T$. It follows from the fact that $\phi_{f_{a,b}}$ is constant that $\beta$ is independent of $x$, and we have $\pi\circ\phi_g = \beta\circ\alpha$.
\end{proof}

\begin{lemma}\label{lem:eps}
If $\epsilon>0$ is sufficiently small then $\beta(a,b)=(n_1a,n_2a+n_3b)$ where $n_1,n_2,n_3$ are integers and $n_1n_3\neq 0$.
\end{lemma}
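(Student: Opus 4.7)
The plan is to proceed in three steps. First, I would determine the general form of $\beta$ by combining its polynomial structure with the filtration on $\ns_2$. Writing $\beta=(\beta_1,\beta_2)$, each component is a continuous polynomial map $\T^2\to\T$, so by Lemma~\ref{lem:ctspolylin} each is an ordinary polynomial of degree at most $1$, say $\beta_i(a,b)=p_i a+q_i b+r_i$ with $p_i,q_i\in\Z$ and $r_i\in\T$. Since $\beta_1$ takes values in $(\T,\R_{\bullet(1)})$, whose level-two filtration group is trivial, the polynomial-map condition applied to the direction $(0,1)\in G_{(2)}$ forces $\partial_{(0,1)}\beta_1=q_1=0$; no analogous constraint arises for $\beta_2$ from the codomain filtration $\R_{\bullet(2)}$. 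Hence $\beta(a,b)=(n_1 a+c_1,\,n_2 a+n_3 b+c_2)$ with $n_i\in\Z$ and $c_i\in\T$.

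Second, because $g\in\poly_0(G_\bullet,H_\bullet)$ sends $0_{\R^3}$ to $\id_H$, the identity $\pi\circ\phi_g=\beta\circ\alpha$ evaluated at the origin gives $\beta(0,0)=\pi(\id_H\Gamma)=0$, forcing $c_1=c_2=0$ and reducing the general form to $\beta(a,b)=(n_1 a,n_2 a+n_3 b)$.

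Third, and most delicately, I would use the $\epsilon$-balancedness to exclude $n_1 n_3=0$. Applying the single-parameter case of Definition~\ref{def:multib} (the trivial system $\Lf(n)=n$) shows that $\phi_g$ is $\epsilon$-equidistributed in $\nh_3$; composing with the Lipschitz projection $\pi\colon\nh_3\to\T^2$ and using that $\alpha$ is measure-preserving, the pushforward of Haar measure on $\T^2$ under $\beta$ differs from $\mu_{\T^2}$ by at most $O(\|\pi\|_{\Lip}\,\epsilon)$ in the Lipschitz-Wasserstein sense. In the case $n_1=0$ the image of $\beta$ lies in the proper subtorus $\{0\}\times\T$; testing against the character $\chi(x,y)=e(x)$ (Lipschitz norm $2\pi$, vanishing Haar integral, but $\chi\circ\beta\equiv 1$) yields $1\le O(\epsilon)$, impossible for $\epsilon$ small enough.

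The remaining sub-case $n_3=0$ with $n_1\neq 0$ is the main obstacle: the image of $\beta$ is then a $1$-dimensional subtorus of $\T^2$ whose winding numbers could be as large as $\sim 1/\epsilon$, so a single character trivial on this subtorus may have Lipschitz norm too large for a direct contradiction via single-parameter equidistribution alone (since the Wasserstein distance from Haar on such a subtorus to $\mu_{\T^2}$ can decay like $1/\sqrt{n_1^2+n_2^2}$). My expected strategy is to invoke the full multi-parameter Definition~\ref{def:multib} against a carefully chosen system $\Lf$ of size at most $1/\epsilon$ whose Leibman nilmanifold $\nh_3^{\Lf}$ on the degree-$3$-filtered Heisenberg genuinely exploits the non-abelian bracket $[X,Y]=Z$ (for instance a parallelogram or $U^2$-type system). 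The degeneracy $n_3=0$ should translate into a failure of equidistribution of $\phi_g^{\,t}$ on $\nh_3^{\Lf}$ that is detectable by a Lipschitz test function whose norm can be bounded independently of $|n_1|,|n_2|$, thereby completing the contradiction. This step is where the specific non-abelian structure of $\nh_3$, rather than just the topological structure of $\T^2$, must enter essentially.
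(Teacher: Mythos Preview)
Your first two steps and the $n_1=0$ sub-case are correct and essentially match the paper's argument (you are in fact slightly more careful than the paper in tracking and eliminating the constants $c_1,c_2$ via $g\in\poly_0$).

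For the $n_3=0$ sub-case there is a genuine gap, and it stems from a misdiagnosis. You write that ``the specific non-abelian structure of $\nh_3$, rather than just the topological structure of $\T^2$, must enter essentially.'' In fact the opposite is true for this lemma: the contradiction is obtained entirely at the level of the abelian quotient $\ns_2$, via the projection $\pi$. The Heisenberg bracket plays no role here; it is used only in the subsequent Lemma~\ref{lem:calc}. Because your intuition points toward $\nh_3^{\Lf}$, you have not located the mechanism that produces a test function of Lipschitz norm bounded independently of $n_1,n_2$ --- and that mechanism is the whole point.

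Here is what you are missing. If $n_3=0$ then $\beta(a,b)=(n_1 a,\,n_2 a)$ factors through the first-coordinate projection as $\beta'\circ p_1$ with $\beta'\colon\ns_1\to\ns_2$, $a\mapsto(n_1 a,n_2 a)$, a \emph{homomorphism}. Now take $\Lf=\Lf_{U^2}$. Because $\beta'$ is linear, the image of $(\beta')^4$ applied to $\ns_1^{\Lf_{U^2}}$ lies in the $6$-dimensional torus of degree-$1$ cubes $(\T^2,\R_{\bullet(1)}\times\R_{\bullet(1)})^{\Lf_{U^2}}\subset(\T^2)^4$. But the target Leibman nilmanifold is $\ns_2^{\Lf_{U^2}}=(\T^2,\R_{\bullet(1)}\times\R_{\bullet(2)})^{\Lf_{U^2}}$, which is $7$-dimensional because the degree-$2$ filtration on the second coordinate frees up one extra parameter. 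The crucial observation that dissolves your Lipschitz-norm worry is that this $6$-dimensional subtorus is \emph{fixed}, completely independent of $n_1,n_2$. Hence a single nontrivial character $\chi$ on $\ns_2^{\Lf_{U^2}}$ that is trivial on the $6$-dimensional subtorus --- with absolutely bounded Lipschitz norm --- serves for every value of $n_1,n_2$. Pulling $\chi$ back through the (fixed, Lipschitz, measure-preserving) map $\pi^4\colon\nh_3^{\Lf_{U^2}}\to\ns_2^{\Lf_{U^2}}$ produces a Lipschitz function $F$ on $\nh_3^{\Lf_{U^2}}$ with $\int F=0$ but $F\circ\phi_g^{\,4}\equiv 1$ on $\ns_3^{\Lf_{U^2}}$, contradicting $\epsilon$-balancedness once $\epsilon<\|F\|_{\Lip}^{-1}$.
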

\begin{proof}
First we claim that, as a polynomial map from $\ns_2$ to itself, $\beta$ must be of the form
\begin{equation}\label{eq:beta}
\beta(a,b)=(n_1a,n_2a+n_3b)
\end{equation}
for integers $n_1,n_2,n_3$. To see this, note first that composing $\beta$ with coordinate projections and using Lemma \ref{lem:ctspolylin}, we have that $\beta(a,b)=(n_1a+n_1'b,n_2a+n_3b)$ for integers $n_1,n_1',n_2,n_3$. Moreover, since for each fixed $a$ the map $b\mapsto n_1 a+n_1'b$ is by assumption polynomial from $(\T,\R_{\bullet(2)})$ to $(\T,\R_{\bullet(1)})$, we must have $n_1'=0$, by considering cubes in an argument similar to the one in the previous proof. Thus \eqref{eq:beta} holds.\\
\indent It remains to show that $n_1,n_3$ must both be non-zero.\\
\indent If $n_1$ were zero, then the map $\beta$ could not be $\epsilon$-equidistributed for sufficiently small $\epsilon$, and so $\phi_g$ could not be $\epsilon$-balanced.\\
\indent If $n_3$ were zero, then $\phi_g$ could still be $\epsilon$-equidistributed, but it could not be $\epsilon$-balanced for $\epsilon$ sufficiently small. Indeed, the map $\beta(a,b)$ would then depend only on $a$ and would thus induce the polynomial map $\beta':\ns_1\to \ns_2$, $a\mapsto (n_1,n_2)a$. However, then $\beta'$ cannot be $\epsilon$-balanced for $\epsilon$ sufficiently small, as can be seen by considering for instance the system $\Lf_{U^2}$ corresponding to 2-cubes (recall from \eqref{eq:U2} that $\Lf_{U^2}( n_1,n_2,n_3)=(n_1+v\cdot (n_2,n_3))_{v\in \{0,1\}^2}$). Indeed, for any cube $\q:\{0,1\}^2\to \T$ in the Leibman group $(\T,\R_{\bullet(1)})^{\Lf_{U^2}}$, on one hand the linearity of $\beta'$ implies that $\beta'\circ \q$ lies in $(\T^2,\R_{\bullet(1)}\times \R_{\bullet(1)})^{\Lf_{U^2}}$, this being the $6$-dimensional subgroup of $(\T^2)^{\{0,1\}^2}\cong \T^8$ consisting of all the $2$-cubes of degree 1, i.e. maps of the form $v\mapsto x+v_1h_1+v_2h_2$ for some $x,h_1,h_2\in \T^2$. On the other hand, the group $\ns_2^{\Lf_{U^2}}=(\T^2,\R_{\bullet(1)}\times \R_{\bullet(2)})^{\Lf_{U^2}}$ is the greater, 7-dimensional subgroup of $\T^8$ consisting of maps $v\mapsto (x+v_1h_1+v_2h_2, y_v)$, where $x,h_1,h_2,y_v$ are seven independent parameters in $\T$. Consequently, for $\epsilon$ sufficiently small $\beta'\circ \q$ cannot be $\epsilon$-equidistributed in $\ns_2^{\Lf_{U^2}}$ as $\q$ ranges in the group $\ns_1^{\Lf_{U^2}}$.
\end{proof}
\noindent Let us sum up the information on $g$ that we have gathered so far.\\
\indent The last two lemmas combined tell us that $g$ must be a polynomial map from $(\R^3,\R_{\bullet(1)}\times \R_{\bullet(2)}\times \R_{\bullet(3)})$ to $(H,H_\bullet)$ of the form $g(a,b,c)= \begin{psmallmatrix} 1 & n_1 a & r(a,b,c)\\[0.1em]  & 1 & n_2a+n_3b \\[0.1em]  &  & 1 \end{psmallmatrix}$, where $n_i$ are integers with $n_1,n_3$ non-zero, and where $r$ is a real-valued polynomial in the real variables $a,b,c$ such that $g$ is $(\Z^3,\Gamma)$-consistent.\\
\indent Letting $q(a,b)=r(a,b,0)$, we deduce that $h(a,b):=\begin{psmallmatrix} 1 & n_1 a & q(a,b)\\[0.1em]  & 1 & n_2a+n_3b \\[0.1em]  &  & 1 \end{psmallmatrix}$ is a $(\Z^2,\Gamma)$-consistent continuous polynomial map from $(\R^2,\R_{\bullet(1)}\times \R_{\bullet(2)})$ to $(H,H_\bullet)$. We shall now obtain a contradiction by examining the properties that $q$ must satisfy.

\begin{lemma}\label{lem:calc}
Let $q:\R^2\to \R$ be a polynomial such that $h(a,b):=\begin{psmallmatrix} 1 & n_1 a & q(a,b)\\[0.1em]  & 1 & n_2a+n_3b \\[0.1em]  &  & 1 \end{psmallmatrix}$ is a $(\Z^2,\Gamma)$-consistent polynomial map from $(\R^2,\R_{\bullet(1)}\times \R_{\bullet(2)})$ to $(H,H_\bullet)$. Then $n_1n_3=0$.
\end{lemma}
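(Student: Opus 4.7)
My plan is to derive the conclusion $n_1 n_3=0$ directly from the $(\Z^2,\Gamma)$-consistency of $h$, combined with the fact that $q$ is an honest polynomial; the deeper polynomial-map condition on $h$ will play no role beyond this. The first step is to compute $h(a,b)^{-1}h(a+m_1,b+m_2)$ explicitly using the multiplication rule
\[
(u_1,u_2,u_3)\cdot(v_1,v_2,v_3)=(u_1+v_1,\;u_2+v_2,\;u_3+v_3+u_1 v_2)
\]
on $H$, where I identify $\begin{psmallmatrix}1 & u_1 & u_3 \\ & 1 & u_2 \\ & & 1\end{psmallmatrix}$ with $(u_1,u_2,u_3)$. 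The first two coordinates of $h(a,b)^{-1}h(a+m_1,b+m_2)$ are the integers $n_1 m_1$ and $n_2 m_1+n_3 m_2$, while the third is the polynomial
\[
q(a+m_1,b+m_2)-q(a,b)-n_1 a(n_2 m_1+n_3 m_2).
\]
The $(\Z^2,\Gamma)$-consistency demands that this polynomial take integer values on the connected set $\R^2$, hence that it be a constant integer, which I call $C_{m_1,m_2}$.

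Specializing to $(m_1,m_2)=(0,1)$ and $(m_1,m_2)=(1,0)$ yields the two polynomial identities
\[
q(a,b+1)-q(a,b)=n_1 n_3\, a+C_{0,1},\qquad q(a+1,b)-q(a,b)=n_1 n_2\, a+C_{1,0}.
\]
Writing $q(a,b)=\sum_{j=0}^d P_j(a)\,b^j$ with $P_j\in\R[a]$, the first identity, once expanded via $(b+1)^j-b^j=\sum_{k=0}^{j-1}\binom{j}{k}b^k$, forces the coefficient of $b^{d-1}$ on the left to vanish whenever $d\geq 2$. That coefficient equals $d\,P_d(a)$, so $P_d\equiv 0$, contradicting the choice of $d$. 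Hence $q$ has degree at most $1$ in $b$, and reading off the $b$-coefficient from the first identity gives the normal form $q(a,b)=P_0(a)+(n_1 n_3\, a+C_{0,1})\,b$ for some $P_0\in\R[a]$.

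The final step is to substitute this normal form into the second identity. The difference $q(a+1,b)-q(a,b)$ gains a new linear-in-$b$ term $n_1 n_3\, b$ coming from the shift in the $b$-coefficient, so the identity becomes
\[
P_0(a+1)-P_0(a)+n_1 n_3\, b=n_1 n_2\, a+C_{1,0}.
\]
As an identity of polynomials in $(a,b)$, comparing coefficients of $b$ on both sides (zero on the right) gives $n_1 n_3=0$, which is the claimed conclusion. The only step with any substance is the passage to the normal form $q(a,b)=P_0(a)+P_1(a)\,b$, but this follows immediately from the fact that $(b+1)^j-b^j$ has strictly smaller degree in $b$ than $b^j$. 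Thus I do not expect a real obstacle here: once the explicit computation of $h(a,b)^{-1}h(a+m_1,b+m_2)$ is in hand, the rest is a short algebraic manipulation exploiting the asymmetry between shifting in the $a$-direction and shifting in the $b$-direction.
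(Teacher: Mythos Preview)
Your proof is correct and follows essentially the same line as the paper's. Both compute the top-right entry of $h(a,b)^{-1}h(a+k_1,b+k_2)$, observe that $(\Z^2,\Gamma)$-consistency forces the resulting polynomial in $(a,b)$ to be constant for each integer $(k_1,k_2)$, and then exploit the two resulting functional identities for $q$ to reach a contradiction unless $n_1n_3=0$.

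The only difference is in execution: the paper passes to partial derivatives $\partial_1 q$, $\partial_2 q$ and integrates back to obtain two explicit forms for $q$ (one with no cross-term $ab$, one with the cross-term $n_1n_3\,ab$), whose incompatibility yields the conclusion; you instead work with finite differences, first using the shift in $b$ to deduce that $q$ has degree at most $1$ in $b$ with coefficient $n_1n_3\,a+C_{0,1}$, and then reading off the $b$-coefficient after the shift in $a$. Your route is marginally more elementary (no calculus, just comparison of polynomial coefficients), but the underlying mechanism is identical.
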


\begin{proof}
Given any real numbers $a,b,k_1,k_2$, we have that $h(a,b)^{-1}\,h(a+k_1,b+k_2)$ equals
\[
\begin{psmallmatrix} 1 &\hspace{0.5cm} -n_1 a &\hspace{0.5cm} -q(a,b)+n_1a(n_2a+n_3b)\\[0.1em]  &\hspace{0.5cm} 1 &\hspace{0.5cm} -n_2a-n_3b \\[0.1em]  &  & 1 \end{psmallmatrix} \begin{psmallmatrix} 1 &\hspace{0.5cm} n_1 (a+k_1) &\hspace{0.5cm} q(a+k_1,b+k_2)\\[0.1em]  & 1 &\hspace{0.5cm} n_2(a+k_1)+n_3(b+k_2) \\[0.1em]  &  &\hspace{0.5cm} 1 \end{psmallmatrix}.
\]
The top-right entry in this matrix product equals
\[
q(a+k_1,b+k_2)-n_1a(n_2(a+k_1)+n_3(b+k_2))-q(a,b)+n_1a(n_2a+n_3b).
\]
Simplifying this, we obtain the following 4-variable polynomial:
\[
q'(a,b,k_1,k_2)=q(a+k_1,b+k_2)-q(a,b)-n_1a(n_2k_1+n_3k_2).
\]
Now the assumed $(\Z^2,\Gamma)$-consistency implies that for every integer values of $k_1,k_2$ we have $q'(a,b,k_1,k_2)\in \Z$ for every $a,b\in \R$. We claim that this implies that $q'(a,b,k_1,k_2)$ is in fact independent of $a$ and $b$.\\
\indent To see this, let us split $q'$ into the sum $q_0+q_1$, where $q_0(a,b,k_1,k_2)$ consists of all the monomials of $q'$ that involve at least one of $a$ or $b$, and $q_1(k_1,k_2)$ consists of the monomials of $q'$ involving only $k_1$ or $k_2$. If $q_0$ were not the zero polynomial, then there would have to be integers $m_1,m_2$ such that $(a,b)\mapsto q_0(a,b,m_1,m_2)$ is not the zero polynomial. However, our assumption then implies that for every $a,b\in \R$ we have $q_1(m_1,m_2)+q_0(a,b,m_1,m_2)\in \Z$, which is impossible if $(a,b)\mapsto q_0(a,b,m_1,m_2)$ is not identically zero. Hence $q_0(a,b,k_1,k_2)$ must be identically zero and our claim follows.\\
\indent We thus have a 2-variable real polynomial $q'(k_1,k_2)$ that satisfies
\begin{equation}\label{eq:qid}
q'(k_1,k_2)=q(a+k_1,b+k_2)-q(a,b)-n_1a(n_2k_1+n_3k_2),\;\;\forall a,b,k_1,k_2\in \R.
\end{equation}
Let us now consider the partial derivatives of $q'$ around $(0,0)$.\\ 
\indent Noting that $q'(0,0)=0$ (by \eqref{eq:qid}), we have for every $a,b\in \R$ that
\[
\partial_1 q'(0,0)  := \lim_{\epsilon\to 0} \frac{q'(\epsilon,0)}{\epsilon}
 =  \lim_{\epsilon\to 0} \frac{q(a+\epsilon,b)-q(a,b)}{\epsilon}-n_1n_2 a = \partial_1 q(a,b) -n_1n_2 a.
\]
Letting $c_1$ denote the constant $\partial_1 q'(0,0)$, we deduce that 
$\partial_1 q(a,b)= n_1n_2 a +c_1$ for every $a,b\in \R$. Integrating with respect to $a$, this implies that for some 1-variable real polynomial $t$ we have 
\begin{equation}\label{eq:1stvar}
q(a,b)=n_1n_2 a^2/2 +c_1a+t(b).
\end{equation}
With respect to the second variable, we have for every $a,b\in \R$ that
\[
\partial_2 q'(0,0):= \lim_{\epsilon\to 0} \frac{q'(0,\epsilon)}{\epsilon}
 =  \lim_{\epsilon\to 0} \frac{q(a,b+\epsilon)-q(a,b)}{\epsilon}-n_1n_3 a = \partial_2 q(a,b)-n_1n_3 a.
\]
Denoting the constant $\partial_2 q'(0,0)$ by $c_2$, we have then 
$\partial_2 q(a,b) = n_1n_3 a +c_2$ for all $a,b\in \R$. Hence for some 1-variable real polynomial $t'$ we have
\begin{equation}\label{eq:2ndvar}
q(a,b) = n_1n_3 a b +c_2 b+ t'(a).
\end{equation}
The equations \eqref{eq:1stvar} and \eqref{eq:2ndvar} are consistent only if $n_1n_3 ab$ is identically zero, that is only if $n_1n_3=0$.
\end{proof}
\noindent Fixing $\epsilon>0$ sufficiently small, we have that Lemma \ref{lem:calc} contradicts Lemma \ref{lem:eps}, and this completes the proof of Proposition \ref{prop:countercomp3}.

\section{Final remarks}\label{sec:remarks}

\noindent In the last decade, a general approach has emerged in the study of very large combinatorial structures which proceeds by relating these structures to infinite continuous objects. A central example is the notion of limit objects for  convergent sequences of graphs, objects which can be represented by symmetric two-variable measurable functions $W: [0, 1]^2\to [0, 1]$ called \emph{graphons} \cite{L&S}. Analogous limit objects can be defined in the arithmetic setting  for certain notions of convergence for sequences of functions on abelian groups \cite[\S 6.2]{Szegedy:Limits}. The concept in this paper of a continuous model for the convergent sequences $m_{\Lf}(\alpha,\Zmod{p})$ can be viewed as part of the general approach, but it differs from the study of limit objects in a significant way that we would like to emphasize here. \\
\indent A useful definition of convergence for a sequence of functions $(f_n)$ on abelian groups $Z_n$ should guarantee in particular that certain averages of $f_n$ with respect to linear configurations also converge. This is the case for instance in  \cite{Szegedy:Limits} for linear configurations of complexity 1. One of the central purposes of a limit object for such a sequence $(f_n)$, in this case a measurable function $f$ on some compact abelian group, is that the average of $f$ for each such configuration should be \emph{equal} to the limit of the same averages for $f_n$ over $Z_n$. In this sense the notion of limit object involves \emph{exactness}. By contrast, the notion of a continuous model $X$ requires only that one be able to \emph{approximate} the average of $f_n$ by the average of a function over $X$, with arbitrary prescribed accuracy, provided that $n$ is sufficiently large. With this notion of a model, we lose the exactness, but we gain in that the model can be much simpler than a limit object. An example of this gain is the fact that in \cite[Theorem 1]{Szegedy:Limits} the limit object for systems of complexity 1 on groups $\Zmod{p}$ is a compact abelian group with a priori unbounded dimension, whereas in \cite{CS1} it is shown that for modelling such systems the circle group suffices.\\
\indent Let us mention that there is an alternative way to phrase most results in this paper, namely by working with nilspaces and morphisms between them. While this language can simplify certain formulations and be clearer conceptually, there is presently less background literature on it than on filtered nilmanifolds and polynomial maps, so we leave the use of this language for future work.\\
\indent Finally, note that the notion of modelling from Definition \ref{def:modelling} yields a preorder on the set of filtered nilmanifolds, namely the preorder defined by $\ns \leq \nss$ if and only if $\ns$ models $\nss$. Solving the following problem concerning this preorder would yield a generalization of Theorem \ref{thm:main}.
\begin{problem}
For each $s>2$, find a filtered nilmanifold $\ns_s$ of minimal dimension that is a lower bound, in the modelling preorder, for the set of filtered nilmanifolds of  degree at most $s$.
\end{problem}

\begin{appendix}
\section{Background notions and results}\label{App}

\noindent By a filtration $G_\bullet$ on a group $G$ we mean a sequence of nested subgroups $G_{(0)} \rhd G_{(1)} \rhd \ldots$ of $G$ with $G_{(0)}=G_{(1)}=G$ and such that for every $i,j \geq 0$ the group of commutators $[G_{(i)},G_{(j)}]$ is contained in $G_{(i+j)}$. We say that $G_\bullet$ has degree at most $d$ if $G_{(d+1)}=\{\id_G\}$.

We shall use the ring structure on $\R^t$. With this structure, we can define binomial coefficients of vectors $v=\begin{psmallmatrix}v(1)\\[-0.4em]\vdots \\[0.1em] v(t)\end{psmallmatrix}\in \R^t$, writing $\binom{v}{k}$ for the vector $\begin{psmallmatrix}\binom{v(1)}{k}\\[-0.4em]\vdots \\[0.1em] \binom{v(t)}{k}\end{psmallmatrix}\in \R^t$, for $k$ a non-negative integer.

\begin{defn}[The Leibman group]\label{def:LeibGp}
  Let $\Lf = (\lf_1,\ldots,\lf_t)$ be a collection of linear forms $\lf_1,\ldots,\lf_t: \Z^D \to \Z$, let us view $\Lf$ as a matrix in $\Z^{t\times D}$, and let $v_1,\dots,v_D$ denote the columns of $\Lf$. For each $i \geq 1$,  we denote by $\Lf^{[i]}$ the subgroup of $\Z^t$ generated by the elements $\prod_{j\in [D]}\binom{v_j}{k_j}$ where $1\leq \sum_{j\in [D]} k_j\leq i$. Given a filtered group $(G, G_\bullet)$, the \emph{Leibman group} for $\Lf$ on $(G,G_\bullet)$ is the following normal subgroup of $G^t$:
\[
(G,G_\bullet)^{\Lf} = \left\langle g^v : g\in G_{(i)},\; v \in {\Lf}^{[i]},\;i\geq 1\right\rangle,
\]
where $g^v:=\begin{psmallmatrix}g^{v(1)}\\[-0.4em]\vdots \\[0.1em] g^{v(t)}\end{psmallmatrix}\in G^t$.
\end{defn}
\noindent This is essentially Leibman's original definition \cite[\S 5.6]{Leibdiag}, except that the filtration here is a general one (instead of the lower central series) and we are not assuming that the subgroup generated by the columns of $\Lf$ contains $\begin{psmallmatrix} 1\\[-0.4em]\vdots \\[0.1em] 1\end{psmallmatrix}$ (unlike in \cite[\S 5.2]{Leibdiag}).\footnote{In \cite[Definition 3,1]{GTarith} the definition of the Leibman group involves real subspaces of $\R^t$ rather than subgroups of $\Z^t$, and is restricted to connected and simply-connected Lie groups $G$. However, for such groups, that definition agrees with the one given here; this fact was already noted in \cite[\S 5.9]{Leibdiag}.}\\

\noindent Recall the following general notion of polynomial maps between filtered groups (this is a special case of \cite[Definition B.1]{GTZ}).
\begin{defn}[Polynomial maps]\label{def:polymaps}
Let $(G,G_\bullet)$ and $(H,H_\bullet)$ be filtered groups. Given a map $g:H\to G$, and $h\in H$, we define the map $\partial_h g:H\to G$ by $\partial_h g(x) = g(x)^{-1} g(xh)$. 
We say that $g$ is a \emph{polynomial map} adapted to $H_\bullet,G_\bullet$ if for every non-negative integers $i_1,i_2,\ldots,i_n$ and elements $h_j\in H_{(i_j)}$, $j\in[n]$, we have $\partial_{h_1} \partial_{h_2}\cdots \partial_{h_n} g (x) \in G_{(i_1+\cdots + i_n)}$ for all $x\in H$. The set of such maps is denoted $\poly(H_\bullet,G_\bullet)$. We denote by $\poly_0(H_\bullet,G_\bullet)$ the subset of $\poly(H_\bullet,G_\bullet)$ consisting of those maps $g$ satisfying $g(\id_H)=\id_G$.
\end{defn}
\noindent We recall two central facts about these maps: firstly, that $\poly(H_\bullet,G_\bullet)$ with pointwise product is a group (this is the Lazard-Leibman theorem \cite{Laz,Leib}); secondly, that polynomial maps enjoy the composition property, and so filtered groups with polynomial maps between them form a category (see \cite[Corollaries B.4 and B.5]{GTZ}). Note that the same properties hold for $\poly_0(H_\bullet,G_\bullet)$.

Note also that if $G,H$ are abelian and we let $H_\bullet$ be the lower central series and $G_\bullet$ be the maximal degree-$d$ filtration (cf. the paragraph before Theorem \ref{thm:main}), then $g\in \poly(H_\bullet,G_\bullet)$ if and only if $g$ is polynomial of degree $\leq d$ in the sense of \eqref{eq:abpolymap}.

In Definition \ref{def:multib} and elsewhere we use the fact that polynomial maps with trivial constant term preserve Leibman groups. We justify this as follows.

\begin{proposition}\label{prop:polykeepLeib}
Let $(G,G_\bullet),(G',G_\bullet')$ be filtered groups, let $\eqm\in \poly_0(G_\bullet,G_\bullet')$, and let $\Lf:\Z^D\to\Z^t$ be a system of linear forms. Then $g^t$ maps $(G,G_\bullet)^\Lf$ into $(G',G'_\bullet)^\Lf$.
\end{proposition}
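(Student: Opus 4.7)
The obstacle is that $g^t:G^t\to(G')^t$ is not a group homomorphism, so one cannot simply verify the claim on the generators $h^v$ of $(G,G_\bullet)^\Lf$ and close under products. The strategy is to exploit instead the composition property for polynomial maps (stated just after Definition \ref{def:polymaps}): I will realize every $y\in(G,G_\bullet)^\Lf$ as $\Phi(\vec 1)$ for some polynomial map $\Phi\in\poly_0(\Z^D_{\bullet(1)},(G^t,G_\bullet^t))$ whose entire image lies in $(G,G_\bullet)^\Lf$; then $\Psi:=g^t\circ\Phi$ is automatically polynomial by composition, and it suffices to show that its image is contained in $(G',G'_\bullet)^\Lf$.

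\textbf{Construction and analysis of $\Phi$.} For a generator $h^v$ with $h\in G_{(i)}$ and $v=\prod_{j\in[D]}\binom{u_j}{k_j}$, $\sum_j k_j=i$, I set $\Phi_{h,\vec k}(n):=h^{V(n)}$ where $V:\Z^D\to\Z^t$ is the coordinatewise polynomial given by $V(n)(a)=\prod_{j\in[D]}\binom{n_j u_j(a)}{k_j}$. Iterated Vandermonde expansion rewrites each factor $\binom{n_ju_j(a)}{k_j}$ as a $\Z$-linear combination of $\{\binom{u_j(a)}{l}\}_{l\le k_j}$ with polynomial-in-$n$ coefficients, so $V(n)$ is a $\Z$-linear combination of vectors $\prod_j\binom{u_j}{l_j}$ with $\sum_j l_j\le i$, all of which lie in $\Lf^{[i]}$; since $\Lf^{[i]}$ is a subgroup of $\Z^t$, we get $V(n)\in\Lf^{[i]}$ for every $n$, whence $\Phi_{h,\vec k}(n)$ is a generator of $(G,G_\bullet)^\Lf$. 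The map $n\mapsto h^{p(n)}$ is polynomial from $\Z^D_{\bullet(1)}$ to $G_\bullet$ for any scalar polynomial $p$, since iterated differences reduce to $h$ raised to differences of $p$ (using that $h$ commutes with itself). General elements of $(G,G_\bullet)^\Lf$ are obtained as pointwise products of such $\Phi_{h_r,\vec k_r}$'s, and these remain polynomial by the Lazard--Leibman theorem and remain valued in $(G,G_\bullet)^\Lf$ since it is a subgroup.

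\textbf{The image of $\Psi$ and the main obstacle.} For a single-generator $\Phi=\Phi_{h,\vec k}$, the sequence $m\mapsto g(h^m)$ is polynomial from the degree-$i$ filtration $\Z_{\bullet(i)}$ on $\Z$ (with respect to which $m\mapsto h^m$ is polynomial, using $h\in G_{(i)}$) into $G'_\bullet$, so its Taylor expansion has the form $g(h^m)=\prod_{j\ge 1}\tilde h_j^{\binom{m}{j}}$ with $\tilde h_j\in G'_{(ij)}$. Substituting $m=V(n)(a)$ and Vandermonde-expanding $\binom{V(n)(a)}{j}$ in the basis of products $\prod_l\binom{u_l(a)}{l'_l}$ with $\sum_l l'_l\le ij$ shows that the exponent vector $\bigl(\binom{V(n)(a)}{j}\bigr)_{a\in[t]}$ lies in $\Lf^{[ij]}$; hence each factor $\tilde h_j^{(\cdots)}$ is a generator of $(G',G'_\bullet)^\Lf$ and $\Psi(n)\in(G',G'_\bullet)^\Lf$, so evaluating at $\vec 1$ gives $g^t(h^v)\in(G',G'_\bullet)^\Lf$. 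For a general product $\Phi=\prod_r\Phi_{h_r,\vec k_r}$, I proceed by induction on the number of factors via the identity $g^t(y_1y_2)=g^t(y_1)\cdot\partial_{y_2}g^t(y_1)$, where the remainder $\partial_{y_2}g^t(y_1)$ is analyzed by the same Taylor-expansion bookkeeping applied to the auxiliary polynomial sequence $m\mapsto g^t(y_1y_2^m)$. The main obstacle is precisely this bookkeeping: ensuring that the filtration level $ij$ of each Taylor coefficient $\tilde h_j$ is matched by an exponent vector lying in exactly $\Lf^{[ij]}$. The key reason this matching occurs is that $m\mapsto h^m$ is polynomial with respect to the degree-$i$ filtration on $\Z$ (not merely the lower central series), which forces the Taylor coefficients of $g(h^m)$ into $G'_{(ij)}$ rather than only $G'_{(j)}$; the remaining verifications are Vandermonde identities and the definition of $\Lf^{[\cdot]}$.
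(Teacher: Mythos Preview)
Your single-generator analysis is essentially sound, and the instinct to route everything through composition of polynomial maps is right. But the inductive step on products has a genuine gap. In the base case, the Taylor coefficients $\tilde h_j$ in $g(h^m)=\prod_j \tilde h_j^{\binom{m}{j}}$ are independent of the coordinate $a\in[t]$; that is precisely what lets you write $g^t(h^v)=\prod_j \tilde h_j^{\binom{v}{j}}$ and then check that the \emph{exponent vector} $\binom{v}{j}$ lies in $\Lf^{[ij]}$. In the inductive step, however, your auxiliary sequence is coordinate-wise $m\mapsto g\bigl(y_1(a)\,h^{m v(a)}\bigr)$, whose Taylor coefficients now depend on $a$ through $y_1(a)$, not merely through $v(a)$. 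The resulting first coefficient vector $(\tilde h_{1,a})_{a\in[t]}\in (G'_{(i)})^t$ has no reason to be of the generator form $\tilde h^{\,w}$ for a single $\tilde h\in G'_{(i)}$ and some $w\in\Lf^{[i]}$, so the Vandermonde bookkeeping no longer applies and the argument stalls.

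The paper sidesteps products entirely by first proving the characterization
\[
(G,G_\bullet)^\Lf \;=\; \bigl\{\,(h(\lf_1),\ldots,h(\lf_t)) : h\in\poly_0(\Z^D,G_\bullet)\,\bigr\},
\]
where $\lf_i\in\Z^D$ is the coefficient vector of the $i$-th form. One inclusion is Taylor expansion of $h$; for the other, each generator $h_0^v$ with $v=\prod_j\binom{u_j}{k_j}$ arises from $h(m)=h_0^{\prod_j\binom{m_j}{k_j}}$, and products are absorbed by the Lazard--Leibman group property of $\poly_0(\Z^D,G_\bullet)$. The proposition is then one line: $g^t\bigl(h(\lf_1),\ldots,h(\lf_t)\bigr)=\bigl((g\circ h)(\lf_1),\ldots,(g\circ h)(\lf_t)\bigr)$, and $g\circ h\in\poly_0(\Z^D,G'_\bullet)$ by composition. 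In fact your own product construction already encodes this characterization: setting $h(m)=\prod_r h_r^{\prod_j\binom{m_j}{k_j^{(r)}}}$ gives exactly $(h(\lf_1),\ldots,h(\lf_t))=\prod_r h_r^{v_r}$. Had you evaluated at the fixed points $\lf_a$ rather than introducing the auxiliary parameter $n$, the induction and the problematic sequence $m\mapsto g^t(y_1 y_2^m)$ would have been unnecessary.
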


\begin{proof}
Let us view $\Lf$ as a matrix in $\Z^{t\times D}$ as in Definition \ref{def:LeibGp}, with its $i$-th row identified with $\lf_i$ as an element of $\Z^D$. Now note the following alternative characterization of the Leibman group:
\begin{equation}\label{eq:LeibGpChar}
(G,G_\bullet)^\Lf = \left\{\begin{psmallmatrix}h(\lf_1)\\[-0.4em]\vdots \\[0.1em] h(\lf_t)\end{psmallmatrix}: h\in \poly_0\big(\Z^D,G_\bullet\big)\right\}.
\end{equation}
This was already observed in \cite[\S 5.7]{Leibdiag} in the case of the lower-central series $G_\bullet=G_{\bullet(1)}$. Let us outline a proof for general filtrations. To show that the right side of \eqref{eq:LeibGpChar} is included in $(G,G_\bullet)^\Lf$, one can use the fact that every $h\in \poly_0(\Z^D,G_\bullet)$ has a Taylor expansion (see \cite[Lemma A.1]{GTarith}) with trivial $0$-th coefficient. Substituting this expansion into $\begin{psmallmatrix}h(\lf_1)\\[-0.4em]\vdots \\[0.1em] h(\lf_t)\end{psmallmatrix}$, we find that this is a product of elements of the form $g^{\prod_{j\in [D]}\binom{v_j}{k_j}}$ where $1\leq \sum_{j\in [D]} k_j\leq i$ and $g\in G_{(i)}$, whence these elements are all in $(G,G_\bullet)^\Lf$, and the inclusion follows. The opposite inclusion follows from the fact that for each $g\in G_{(i)}$ and each $v$  generating $\Lf^{[i]}$ (thus $v=\prod_{j\in [D]}\binom{\Lf(e_j)}{k_j}$ for the elements $e_j$ of the standard basis of $\R^D$), the element $g^v=\begin{psmallmatrix}g^{v(1)}\\[-0.4em]\vdots \\[0.1em] g^{v(t)}\end{psmallmatrix}$ is of the form $\begin{psmallmatrix}h(\lf_1)\\[-0.4em]\vdots \\[0.1em] h(\lf_t)\end{psmallmatrix}$ where $h$ is the map $(n_1,\dots,n_D)\mapsto g^{\prod_{j\in [D]}\binom{n_j}{k_j}}$. By the same Taylor expansion result as above, the map $h$ is in $\poly_0(\Z^D,G_\bullet)$, so $g^v$ is in the right side of \eqref{eq:LeibGpChar}. The inclusion is then fully deduced using the group property of $\poly_0(\Z^D,G_\bullet)$.\\
\indent To prove the main claim in the proposition, note that, by the composition property recalled above, we have for every $h\in  \poly_0(\Z^D,G_\bullet)$ that $g\circ h \in  \poly_0(\Z^D,G_\bullet')$. Hence for every  element $\begin{psmallmatrix}h(\lf_1)\\[-0.4em]\vdots \\[0.1em] h(\lf_t)\end{psmallmatrix}\in (G,G_\bullet)^\Lf$ we have $g^t\begin{psmallmatrix}h(\lf_1)\\[-0.4em]\vdots \\[0.1em] h(\lf_t)\end{psmallmatrix}=\begin{psmallmatrix}g\circ h(\lf_1)\\[-0.4em]\vdots \\[0.1em] g\circ h(\lf_t)\end{psmallmatrix}\in (G',G_\bullet')^\Lf$. 
\end{proof}

\noindent We now discuss nilmanifolds.

\begin{defn}[Filtered nilmanifold]\label{def:Filnil}
A \emph{filtered nilmanifold} $(G/\Gamma,G_\bullet)$ of degree at most $d$ consists of a connected and simply-connected Lie group $G$, a lattice $\Gamma$ in $G$ (i.e. a discrete cocompact subgroup) and a filtration $G_\bullet$ of degree at most $d$ on $G$, with each group $G_{(i)}$ being a closed and connected subgroup of $G$ such that $\Gamma_{(i)}:=\Gamma\cap G_{(i)}$ is a lattice in $G_{(i)}$. If a Mal'cev basis\footnote{See \cite[Definitions 2.1 and 2.4]{GTOrb} for the definition of a Mal'cev basis and for the notion of quantitative rationality for such bases.} $\X$ on $G/\Gamma$ adapted to $G_\bullet$ has been fixed, we refer to $(G/\Gamma,G_\bullet,\X)$ as a \emph{filtered based nilmanifold}. The \emph{complexity} of such a nilmanifold is the least $C>0$ such that the topological dimension of $G$, the degree of $G_\bullet$, and the rationality of $\X$ are all at most $C$.
\end{defn}

\begin{remark}\label{rem:disconnected}
In this paper we occasionally use the term \emph{possibly disconnected filtered nilmanifold} to refer to a couple $(G/\Gamma,G_\bullet)$ where  $G$ is a simply-connected but not necessarily connected Lie group, where $G_\bullet$ is a filtration of closed (simply-connected) subgroups $G_{(i)}$ of $G$, and where $\Gamma$ is a lattice in $G$ such that $\Gamma\cap G_{(i)}$ is a lattice in $G_{(i)}$. Let us emphasize that, by the term ``filtered nilmanifold" without the additional term ``possibly disconnected", we always refer to the notion in Definition \ref{def:Filnil}.
\end{remark}

\noindent When the filtration $G_\bullet$ on $G$ is clear from the context, we abbreviate the notation $(G,G_\bullet)^\Lf$ to $G^\Lf$.

\begin{defn}[Leibman nilmanifold]\label{def:Leibnil}
Let $\ns=(G/\Gamma,G_\bullet)$ be a filtered nilmanifold of finite degree, and let $\Lf:\Z^D\to \Z^t$ be a system of linear forms. The \emph{Leibman nilmanifold} for $\Lf$ on $\ns$, denoted $\ns^\Lf$, is the  nilmanifold $G^\Lf/\Gamma^\Lf$.
\end{defn}
\noindent For proofs that $G^\Lf$ and $\Gamma^\Lf$ have the required properties for $G^\Lf/\Gamma^\Lf$ to be a nilmanifold, we refer the reader to \cite[\S 3]{GTarith}.\\
\indent We now recall the definition of complexity of a system of forms, which was introduced by Gowers and Wolf (who called it ``true complexity" to distinguish it from an earlier complexity notion introduced by Green and Tao) and studied in a series of papers beginning with \cite{GWcomp}. 

\begin{defn}[Size and complexity of systems of linear forms]\label{def:size&comp}
We say that a system $\Lf = (\lf_1, \ldots, \lf_t)$ of linear forms $\lf_i : \Z^D \to \Z$ has \emph{size at most $L$} if $D,t \leq L$ and the coefficients of each $\lf_i$ have absolute value at most $L$. We say that $\Lf$ has  \emph{complexity} $s$ (on simple abelian groups) if for every $\eta>0$ there exists $\lambda=\lambda(\Lf,\eta)>0$ such that, for every prime $p$, for every $f,f':\Zmod{p}\to\C$ bounded by 1 and satisfying $\|f-f'\|_{U^{s+1}}\leq \lambda$, we have $|\Sol_\Lf(f:\Zmod{p})-\Sol_\Lf(f':\Zmod{p})|\leq \eta$, and $s$ is the least integer with this property.
\end{defn}
\noindent There are equivalent formulations of this notion. The main one of these was originally conjectured by Gowers and Wolf in \cite{GWcomp}, and states that $\Lf$ has complexity $s$ if the powers $\lf_i^{s+1}$ are linearly independent and $s$ is the least integer with this property (here we view the forms $\lf_i$ as linear polynomials with integer coefficients in $\R[x_1,\dots,x_D]$); see \cite{GW,GWcomp}, and also Theorem 7.1 and the remark at the end of Section 7 in \cite{GTarith}. This formulation makes it clear that every form in a system of finite complexity must be non-zero. This fact is relevant, for instance, in relation to  the $L^1$-continuity mentioned in Remark \ref{rem:L1cont}, as it implies the surjectivity of the coordinate projections mentioned in that remark.

Let us now turn to Theorem \ref{thm:Zp-to-X}, which we restate here.
\begin{theorem}\label{thm:Zp-to-X-app}
For every $\delta >0$ there exists $C>0$ such that the following holds. For every prime $p\geq C$ and every function $f : \Zmod{p} \to [0,1]$, there is a filtered based nilmanifold $\ns=(G/\Gamma, G_\bullet, \X)$ of degree at most $2$ and complexity at most $C$, and a continuous function $F : G/\Gamma \to [0,1]$ satisfying $\norm{F}_{\Lip(\X)} \leq C$, such that for every system $\Lf$ of integer linear forms of size at most $1/\delta$ and complexity at most 2, we have
\begin{equation}\label{eq:Zp-to-X-app}
\abs{\, \Sol_\Lf(f : \Zmod{p}) - \Sol_\Lf(F: \ns )\,} \leq \delta.
\end{equation}
\end{theorem}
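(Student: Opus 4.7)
The plan is to apply the $U^3$-regularity lemma in its refined product form (Theorem \ref{thm:periodic_regularity}), then use in turn the complexity-$2$ hypothesis and quantitative equidistribution on the resulting nilmanifold. Throughout, we set the regularity parameters as functions of $\delta$ (and of a yet-to-be-chosen growth function $\mathcal{F}$) to be calibrated at the end.

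First, apply Theorem \ref{thm:periodic_regularity} with $s=2$, a small parameter $\epsilon_1 = \epsilon_1(\delta)$, and a sufficiently fast-growing $\mathcal{F}$, to obtain a decomposition
\[
f \;=\; f_\nil + f_\sml + f_\unf,
\]
where $f_\nil(n) = F(g(n)\Gamma)$ for a $p$-periodic $\mathcal{F}(M)$-irrational polynomial sequence $g\in \poly_0(\Z,G_\bullet)$ on some filtered based nilmanifold $\ns=(G/\Gamma,G_\bullet,\X)$ of degree $\leq 2$ and complexity $\leq M=O_\delta(1)$, with $\|F\|_{\Lip(\X)}\leq M$. Proposition \ref{prop:prodecomp} (or rather its use inside the regularity lemma) allows us to assume $\ns$ has the product form $\nss_1\times\nss_2$, which will be convenient below but is not strictly necessary for the present theorem.

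Next, I would pass from $\Sol_\Lf(f:\Zmod{p})$ to $\Sol_\Lf(f_\nil + f_\sml:\Zmod{p})$ using the definition of complexity $\leq 2$ (Definition \ref{def:size&comp}): since $f,\, f_\nil+f_\sml\colon\Zmod{p}\to[0,1]$ differ by $f_\unf$ with $\|f_\unf\|_{U^3}\leq 1/\mathcal{F}(M)$, the complexity assumption applied to every system of size $\leq 1/\delta$ (a finite family once $\delta$ is fixed) yields that the two $\Lf$-averages differ by at most $\delta/3$, provided $\mathcal{F}$ was chosen to grow fast enough relative to the modulus of continuity $\lambda(\Lf,\cdot)$ across this finite family. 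Then I would strip off $f_\sml$ by multilinearly expanding the $t$-fold product and noting that each of the $2^t-1$ cross-terms contains a factor of $f_\sml$; using the surjectivity of each coordinate projection $\ns_2^\Lf\to\ns_2$ (equivalently, that every form is nonzero, a consequence of finite complexity, cf.\ Remark \ref{rem:L1cont}), each such cross-term is bounded by $\|f_\sml\|_{L^1(\Zmod{p})}\leq\|f_\sml\|_{L^2(\Zmod{p})}\leq \epsilon_1$ times $\|f_\nil\|_\infty^{t-1}\leq 2^{t-1}$. Choosing $\epsilon_1$ sufficiently small in terms of $\delta$ and $1/\delta$, this contributes at most $\delta/3$.

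The remaining step is to compare $\Sol_\Lf(f_\nil:\Zmod{p})$ with $\Sol_\Lf(F:\ns)$. Writing $\Sol_\Lf(f_\nil:\Zmod{p})=\E_{n\in\Zmod{p}^D}F^{\otimes^t}\big(g^t(\Lf(n))\Gamma^t\big)$, and noting by Proposition \ref{prop:polykeepLeib} that $g^t\circ\Lf$ takes values in the Leibman group, the $\mathcal{F}(M)$-irrationality of $g$ (in the sense of \cite[Definition 4.7]{CS3}) ensures, by the multiparameter equidistribution theorem, that the orbit of $g^t\circ\Lf$ is $\delta'$-equidistributed in $\ns^\Lf$ for $\delta'=\delta'(\mathcal{F}(M))\to 0$ as $\mathcal{F}(M)\to\infty$, uniformly over systems $\Lf$ of size $\leq 1/\delta$. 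Hence the discrete average is within $\delta/6$ of $\int_{\ns^\Lf}F^{\otimes^t}=\Sol_\Lf(F:\ns)$, once $\mathcal{F}$ is chosen fast enough. To guarantee the target $F$ is genuinely $[0,1]$-valued (as stated), I would replace $F$ by $\widetilde F := \max(0,\min(1,F))$; the clip has the same Lipschitz constant, and since $F\circ g$ is $[0,1]$-valued on the $\delta'$-dense orbit, one sees $\|F-\widetilde F\|_{L^1(\ns)}=O(\|F\|_{\Lip}\,\delta')$, so replacing $F$ by $\widetilde F$ alters $\Sol_\Lf(F:\ns)$ by at most $\delta/6$ via the $L^1$-continuity of $\Sol_\Lf(\cdot:\ns)$ analogous to Remark \ref{rem:L1cont}.

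The main obstacle is the parameter bookkeeping: one must choose $\epsilon_1$ small enough to absorb the $t\cdot 2^t$ factor in the multilinear expansion (with $t\leq 1/\delta$), and pick $\mathcal{F}$ to dominate both the inverse modulus of continuity $1/\lambda(\Lf,\delta/3)$ from the complexity assumption and the quantitative equidistribution rate needed in the last step, simultaneously for the finite family of systems of size $\leq 1/\delta$. Once these choices are made consistently, setting $C$ as the resulting upper bound on $M$ (and on $N_0$, and on the Lipschitz norm of $\widetilde F$) completes the proof.
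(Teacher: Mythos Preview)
Your proof is correct and follows essentially the same route as the paper: apply the $U^3$ regularity lemma, use the complexity-$2$ hypothesis and the $L^2$-smallness of $f_\sml$ to reduce to $\Sol_\Lf(f_\nil:\Zmod{p})$, and then invoke the counting/equidistribution lemma for irrational nilsequences to identify this with $\Sol_\Lf(F:\ns)$. The one place where the paper is cleaner is the clipping step: since $f_\nil=F(g(\cdot)\Gamma)$ is already $[0,1]$-valued, replacing $F$ by $\max(0,\min(1,F))$ does not change $f_\nil$ at all, so you can clip \emph{before} applying the counting lemma and avoid your $L^1$-density estimate entirely.
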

\noindent The proof is essentially a combination of \cite[Theorem 5.1]{CS3} (a regularity result for the $U^d(\Zmod{p})$ norm) with \cite[Theorem 4.1]{CS3} (a counting result for balanced maps on $\Zmod{p}$), but with a couple of additional observations. 
\begin{proof}
Let $\eta > 0$ and let $\mathcal{F} : \R^+ \to \R^+$ be a growth function, both to be specified in terms of $\delta$ later, and let $\lambda$ be as in Definition \ref{def:size&comp}. If $p\gg_{\eta,\mathcal{F}} 1$, we may apply the regularity result \cite[Theorem 5.1]{CS3} for the $U^3$ norm to $f$, to obtain a decomposition $f = f_\nil + f_\sml + f_\unf$ and an integer $Q = O_{\eta,\delta,\mathcal{F}}(1)$ with the following properties:
\begin{enumerate}
\item There is a nilmanifold $\ns=(G/\Gamma, G_\bullet, \X)$ of degree at most $2$ and complexity at most $Q$, an $\mathcal{F}(Q)$-irrational sequence $g \in \poly(\Z,G_\bullet)$ that is\footnote{In the terminology of this paper, $g$ is a $(p\Z,\Gamma)$-consistent sequence in $\poly(\Z,G_\bullet)$ such that $g(n)\Gamma$ is $c$-balanced with $c=o(1)_{\mathcal{F}(Q)\to\infty}$ . For the definition of irrationality, see \cite[Definition 4.7]{CS3}.} $p$-periodic mod $\Gamma$, and $F : G/\Gamma \to \C$ with $\norm{F}_{\Lip(\X)} \leq Q$, such that $f_\nil = F(g(n)\Gamma)$.  Note that we also have $g(0)=\id_G$, indeed this follows from the factorization result \cite[Proposition 5.2]{CS3} as used in the proof of \cite[Theorem 5.1]{CS3}.
\item $\norm{f_\sml}_2 \leq \eta$,
\item $\norm{f_\unf}_{U^3} \leq 1/\mathcal{F}(Q)$, and
\item $f_\nil$ and $f_\nil + f_\sml$ take values in $[0,1]$.
\end{enumerate}
Furthermore, since $f_\nil$ is $[0,1]$-valued, we may assume that $F$ is real-valued by taking real parts, and then by replacing it with $\max(\min(F,1),0)$ we may in fact assume that it is also $[0,1]$-valued; neither of these operations can increase $\norm{F}_{\Lip(\X)}$.

Now let $\Lf = (\lf_1, \ldots, \lf_t)$ be any system of linear forms $\lf_i : \Z^D \to \Z$ of size at most $1/\delta$ and complexity at most 2. Then by the choice of $\lambda$ and a standard bound for $\Sol_\Lf(f)$ in terms of $\|f\|_2$ (see \cite[inequality (1)]{CS3}), we have
\begin{equation}\label{eq:fnilreduc}
 \abs{ \Sol_\Lf(f:\Zmod{p}) - \Sol_\Lf(f_\nil:\Zmod{p}) } =o_\delta(1)\;\;\textrm{ as }\eta\to 0\textrm{ and }\mathcal{F}(Q)\to\infty.
\end{equation}
We now deal with $\Sol_\Lf(f_\nil: \Zmod{p})$ using the counting result  \cite[Theorem 4.1]{CS3}. The Lipschitz function that we use is $F^{\otimes t} : G^t/\Gamma^t \to \C$, $(x_1, \ldots, x_t) \mapsto F(x_1)\cdots F(x_t)$. This satisfies $\norm{F^{\otimes t}}_{\Lip(\X^t)} = O_{Q,\delta}(1)$ by \cite[Lemma A.4]{CS3}. Applying \cite[Theorem 4.1]{CS3} to this function, with parameter $M=O_{\delta,\eta,\mathcal{F}}(1)$, we obtain
\begin{equation}\label{eq:countingappli}
\Sol_\Lf(f_\nil:\Zmod{p}) = \E_{n\in \Zmod{p}^D} F^{\otimes t}(g^t(\Lf(n))\Gamma^t) = \int_{G^\Lf/\Gamma^\Lf} F^{\otimes t} + o_M(1)_{\mathcal{F}(Q) \to \infty}. 
\end{equation}
Combining \eqref{eq:fnilreduc} and \eqref{eq:countingappli}, we now set $\eta=\eta(\delta)$ sufficiently small, and $\mathcal{F}$ with sufficiently fast growth in terms of $\delta$, to obtain \eqref{eq:Zp-to-X}, noting that $ \int_{G^\Lf/\Gamma^\Lf} F^{\otimes t}=\Sol_\Lf(F: \ns )$.
\end{proof}
\end{appendix}

\noindent \textbf{Acknowledgements.} This work was supported by the ERC Consolidator Grant No. 617747. The authors also thank an anonymous referee for very useful remarks.

\end{document}